\newtheorem{theorem}{Theorem}[section]
\newtheorem{corollary}[theorem]{Corollary}
\newtheorem{definition1}{Definition}[section]
\newtheorem{observe}{Observation}[section]
\newtheorem{remark1}[observe]{Remark}
\newtheorem{example1}{Example}[section]
\newtheorem{aside1}[observe]{Aside}
\newenvironment{definition}[1][]{\begin{definition1}[#1] \rm}{\end{definition1}}
\newenvironment{observation}{\begin{observe} \rm}{\end{observe}}
\newenvironment{remark}{\begin{remark1} \rm}{\end{remark1}}
\def\qed{\hfill$\blacksquare$\\} \renewenvironment{proof}{\noindent {\bf 
Proof.}}{\qed}
\newif\ifshowboxes \showboxestrue
\providecommand{\e}[1]{\ensuremath{\times 10^{#1}}}
\newcommand{\sgn}{\mathop{\mathrm{sgn}}}
\newcommand{\inner}[2]{\ensuremath{ {\langle #1,#2 \rangle} }}
\renewcommand{\d}{\,\mathrm{d}}
\newcommand{\ceil}[1]{\ensuremath{ {\lceil #1 \rceil} }}
\newcommand{\norm}[1]{\ensuremath{ {\lVert #1 \rVert} }}
\newcommand{\bnorm}[1]{\ensuremath{ {\bigl\lVert #1 \bigr\rVert} }}
\newcommand{\abs}[1]{\ensuremath{ {\lvert #1 \rvert} }}
\newcommand{\babs}[1]{\ensuremath{ {\bigl\lvert #1 \bigr\rvert} }}
\newcommand{\Babs}[1]{\ensuremath{ {\Bigl\lvert #1 \Bigr\rvert} }}
\newcommand{\bbabs}[1]{\ensuremath{ {\biggl\lvert #1 \biggr\rvert} }}
\def\R{\mathbbm{R}}
\def\N{\mathbbm{N}}
\def\1{\mathbbm{1}}
\def\ft{\mathcal{F}}
\def\nt{\mathcal{N}}
\def\st{\mathcal{S}}
\def\O{\mathcal{O}}
\renewcommand{\tilde}{\widetilde}
\newcommand{\tDelta}{\widetilde\Delta}
\newcommand{\bDelta}{\overline\Delta}
\newcommand{\tri}{\Delta^1}
\newcommand{\origin}{O}
\newcommand{\target}{v}
\newcommand{\argmin}{\operatornamewithlimits{argmin}}
\newcommand{\nordn}{N_n}
\newcommand{\nords}{N_s}
\newcommand{\nordl}{N_l}
\newcommand{\nordf}{N_f}
\newcommand{\nordg}{N_g}
\newcommand{\nsubl}{s_l}
\newcommand{\nsubn}{s_n}
\newcommand{\ncor}{c}
\def\P{\mathcal{P}}
\begin{document}

\begin{center}
    \begin{minipage}[t]{6.0in}
      The accurate and efficient evaluation of potentials is of great
      importance for the numerical solution of partial differential
      equations. When the integration domain of the potential is irregular
      and is discretized by an unstructured mesh, the function spaces of
      near field and self-interactions are non-compact, and, thus, their
      computations cannot be easily accelerated. In this paper, we propose
      three novel and complementary techniques for accelerating the
      evaluation of potentials over unstructured meshes.  Firstly, we
      rigorously characterize the geometry of the near field, and show that
      this analysis can be used to eliminate all the unnecessary near field
      interaction computations.  Secondly, as the near field can be made
      arbitrarily small by increasing the order of the far field quadrature
      rule, the expensive near field interaction computation can be
      efficiently offloaded onto the FMM-based far field interaction
      computation, which leverages the computational efficiency of highly
      optimized parallel FMM libraries. Finally, we show that a separate
      interpolation mesh that is staggered to the quadrature mesh
      dramatically reduces the cost of constructing the interpolants.
      Besides these contributions, we present a robust and extensible
      framework for the evaluation and interpolation of 2-D volume
      potentials over complicated geometries. We demonstrate the
      effectiveness of the techniques with several numerical experiments.

 \vspace{ 0.15in}
 \noindent \textbf{Keywords}: potential theory; unstructured mesh; Poisson's
 equation; volume potential; quadrature

   \vspace{ -100.0in}
 
 \thispagestyle{empty}

   \end{minipage}
 \end{center}
 
 \vspace{ 3.10in}
 \vspace{ 0.80in}
 
 \begin{center}
   \begin{minipage}[t]{4.4in}
     \begin{center}

\textbf{Accelerating potential evaluation over unstructured meshes in two dimensions}
\\
   \vspace{ 0.30in}
 
 Zewen Shen$\mbox{}^{\dagger\, \star}$ and
 Kirill Serkh$\mbox{}^{\ddagger\, \diamond}$  \\
               June 16, 2022 
 
     \end{center}
   \vspace{ -50.0in}
   \end{minipage}
 \end{center}
 
 \vspace{ 1.05in}

 \vfill
 
 \noindent 
 $\mbox{}^{\diamond}$  This author's work was supported in part by the NSERC
 Discovery Grants RGPIN-2020-06022 and DGECR-2020-00356.
 \\

 \vspace{2mm}
 
 \noindent
 $\mbox{}^{\dagger}$ Dept.~of Computer Science, University of Toronto,
 Toronto, ON M5S 2E4\\
 \noindent
 $\mbox{}^{\ddagger}$ Dept.~of Math. and Computer Science, University of Toronto,
 Toronto, ON M5S 2E4 \\
 
 \vspace{2mm}
 \noindent 
 $\mbox{}^{\star}$  Corresponding author
 \\

 \vfill
 \eject
\tableofcontents

\section{Introduction}

Integral equation methods solve partial differential
equations (PDEs) by reformulating them as integral equations using the tools
of potential theory. The accurate and efficient evaluation of potentials
with singular or weakly-singular kernels, over curves, surfaces and volumes,
is thus of great importance for the numerical solution of PDEs.  However,
their numerical evaluation poses several difficulties. Firstly, the
potential is often expressed as a
convolution of a Green's function with a density function, and due to the
singularity of the Green's function, special integration techniques must be
employed.  Secondly, the integration domain of the potential could be
complicated, which requires it to either be embedded into a larger regular
domain, or to be resolved by adaptive meshing. Finally, the scheme for
evaluating the potential must be compatible with fast algorithms (e.g.,
the fast multipole method (FMM) or the fast Fourier transform (FFT)) for achieving
linear or quasi-linear time complexity. We note that, when the integration
domain is regular, the difficulties stated above can be easily overcome by
exploiting the translational invariance of the free-space Green's function.
More specifically, given a regular domain that is discretized by a rectangular
mesh, the dimensionality of the function spaces of near field and self-interactions is finite,
and, thus, these interactions can be efficiently tabulated, from which it
follows that the box code \cite{box1,askham} can be used to compute the potential in
linear time with a small constant. However, when the integration domain is
irregular, provided that the domain is discretized by an unstructured mesh,
the function spaces of near field and self-interactions are non-compact,
and, thus, one can no longer easily accelerate the computation by precomputations.

Existing methods for computing the potential over an irregular domain
generally fall into two categories. The first one is based on the
observation that the volume potential is the solution to an elliptic
interface problem, where the irregular domain $\Omega$ is embedded inside a
regular box (see, for example, \cite{mayo1}, for details). A finite
difference method with corrections based on knowledge of the jumps in the
solution across $\partial \Omega$ is applied, with the order of accuracy
determined by the finite difference scheme used. Additionally, the method is
compatible with the FFT when a uniform grid is used to discretize the
domain, and, thus, it can achieve a quasi-linear time complexity. We note
that, although this approach saves the trouble of meshing an irregular
domain, its order of convergence is usually low, and it is not highly
compatible with adaptive mesh refinement. Furthermore, the method
doesn't easily generalize to the surface potential case.

The methods belonging to the second category compute the potential
directly by quadrature. More specifically, the domain $\Omega$ is
discretized into an unstructured mesh, and for each target
location $x$, depending on its proximity to the mesh elements, different
quadrature schemes are used to compute the integral over each mesh element,
which leads to a spectrally accurate evaluation. Moreover, the computation
of the far field interactions (i.e., integrals over mesh elements that are
far away from $x$) can be accelerated by the FMM,
which results in linear total time complexity. We note that research into
methods of this type has mostly focused on the surface potential case,
despite the fact that the algorithms for computing the surface potential
have great similarities with the ones for computing the 2-D volume
potential. 
We refer the readers to \cite{anderson} for a thorough literature review of
methods belonging to these two categories.

Despite the advantages of the direct approach of computing the potential by
quadrature (i.e., spectral accuracy and linear time complexity), the 
actual constants in time complexity are usually large, due to the cost
associated with the near field and self-interaction
computations, which cannot be efficiently precomputed over an unstructured
mesh. In \cite{anderson}, the authors propose a
remedy to this issue. Given an irregular domain $\Omega$, they embed a
rectangular mesh inside a large regular subdomain of $\Omega$, and fill the
rest of the domain with unstructured meshes that conform to the curved
boundary. It follows that a large proportion of the evaluations can be
accelerated by the box code, while respecting the true geometry of the
domain to retain spectral accuracy. However, as the authors of
\cite{anderson} point out, the computation of near field and
self-interactions over the unstructured mesh forms the majority of the costs
in their algorithm, even when the unstructured mesh elements only make up a
small proportion of the total mesh elements. Furthermore, such a remedy is
not applicable to surface potential evaluation. Therefore, we regard
potential evaluation over unstructured meshes as a problem of great
importance in integral equation methods.

In this paper, we propose the following novel and complementary
techniques for accelerating the near and self-interaction computations over
an unstructured mesh.  Below, we briefly describe the techniques.
\begin{enumerate}
\item In the classic literature, the near field is typically
approximated by a ball or a triangle. We observe that this often leads to an overestimation
of the true near field, especially when the order of the quadrature or
the error tolerance is high, which leads to substantial unnecessary and expensive
near field interaction computations. Thus, we rigorously 
characterize the geometry of the near field, which allows us to dramatically
reduce the number of the required near field interaction computations.
Specifically, we show that the near field is approximately equal to the union
of several Bernstein ellipses.  In addition, this technique provides error
control functionality to the far and near field interaction computations.
For example, this technique allows for a precise determination of the number
of required subdivisions of the element domain when the near field
interactions are computed adaptively, which avoids the possibility both of
oversampling and of undersampling. 
\item Since our analysis shows that the near field can be made arbitrarily
small by increasing the order of the far field quadrature rule, we observe
that one can efficiently offload the near field interaction computation onto
the FMM-based far field interaction computation. This trade leverages the
computational efficiency of highly optimized parallel FMM libraries, and
reduces the cost of the much more expensive and unstructured near field
interaction computation.  Furthermore, this offloading technique is 
one of the few applications we are aware of that requires the use of
extremely high-order (say, $50$th order) quadrature rules in high
dimensions.
\item When one interpolates the potential, we observe that the most commonly
used arrangement of the quadrature nodes and interpolation nodes, where they
are both placed over a single mesh, leads to an artificially large near
field and self-interaction computation cost. If the quadrature and
interpolation nodes are instead placed over two separate meshes that are
staggered to one another, the number of interpolation nodes at which the
near field and self-interactions are costly to evaluate is reduced
dramatically. 
\end{enumerate}
We note that the near field geometry analysis for the 1-D layer potential
computed by the trapezoidal rule and the Gauss-Legendre rule has been
rigorously carried out in \cite{barnett} and in \cite{klin,klin1}, respectively, and
recently, the authors of \cite{klin2} characterize the near field geometry of the
surface potential in $\R^3\setminus S$, where $S$ is the surface over which
the potential is generated. The near field geometry analysis for volume
potentials has not been carried out previous to this paper.  Furthermore, we
point out that the near field geometry analysis is much more powerful in
high dimensions (i.e., 2-D volumes, 3-D volumes, surfaces), since the
intersection of the near field with the domain over which the potential is
generated becomes non-negligible in these situations, and thus, the
computation of the near field interactions becomes a bottleneck when solving
integral equations over these domains. We also note that the idea of
increasing the order of the far field quadrature rule to reduce the amount
of near field interaction computation appears, for example, in
\cite{leslie,anderson}. However, it is presented as a heuristic. In fact, without
characterizing the geometry of the near field precisely, such an idea cannot
be optimally carried out. As we show in this paper, the near field is
approximately equal to the union of several Bernstein ellipses, and when the order of
the far field quadrature is high, their area becomes vanishingly small.
Thus, a naive estimation of the near field (by a ball \cite{leslie,bremer1}
or a triangle \cite{anderson}) is seen to be poor, and a large proportion of
the near field interaction computations are unnecessary. In addition to this,
when a naive estimation of the near field is used, one has to overestimate
the size of the near field to improve the robustness of the algorithm, which
further increases the unnecessary cost.

One would generally expect that a reduction in the area of the near field
leads to a reduction in the cost of the computation of near field
interactions. However, this turns out to be false, in the common case where
the target points are chosen to be interpolation nodes over the same
triangle mesh as the one that is used for quadrature. The discretization
nodes tend to cluster near the edges of the mesh elements, which means that
a smaller near field does not necessarily result in a commensurate reduction
in the computational cost. With the use of a separate interpolation mesh
that is staggered to the quadrature mesh, the reduction in the near field
interaction computation cost becomes proportional to the reduction in the
area of the near field. The techniques that we present are thus
complementary.

In this paper, for simplicity, we only consider the evaluation of the 2-D
Newtonian potential over an irregular domain $\Omega$,
  \begin{align}
u(x)=\iint_{\Omega} \frac{1}{2\pi}\log\norm{x-y}f(y)\d A_y.
  \end{align}
We note that our techniques can be easily generalized to kernels of other
types and, also, with some additional work, to surface and 3-D
volume potential evaluations. 

Besides the general contributions to potential evaluation problems that we
describe above, we make the following contributions that are specific to
the 2-D volume potential evaluation problem.
\begin{itemize}
\item We describe a robust and extensible framework for evaluating and
interpolating 2-D volume potentials over complicated geometries, without
the need for extensive precomputation. Our presentation makes minimal use of
specialized quadrature rules, although our framework is compatible with
their use.
\item Although a well-conditioned formula for computing self-interactions has
already appeared, for example, in \cite{zhuthesis,anderson}, its standard derivation 
is somewhat overcomplicated and the necessity of the formula is not
fully motivated. We instead present a short and elementary derivation of
the same formula from the first principles, and explain why it is needed.
\item We provide a description of a full and robust pipeline of
the geometric algorithms that are required for the volume potential
evaluation, e.g., a meshing algorithm, a nearby element query algorithm,
etc.
\end{itemize}

We conduct several numerical experiments to demonstrate the performance of
the techniques for accelerating the computation over an unstructured mesh.
We also report the overall performance of our volume potential evaluation
algorithm.

\section{Mathematical preliminaries}

\subsection{Koornwinder polynomials}
  \label{sec:koorn}
The Koornwinder polynomials, denoted by $K_{nm}:\tri\to \R$, are defined by
\begin{align}
\hspace*{-1em}K_{mn}(x,y)=c_{mn}(1-x)^mP_{n-m}^{(2m+1,0)}(2x-1)P_m\Bigl(\frac{2y}{1-x}-1\Bigr),\quad
m\leq n,
\end{align}
where 
\begin{align}
\tri=\{(x,y)\in \R^2 : 0\leq x\leq 1,0\leq y\leq 1-x\}
\end{align}
is the standard simplex, $P_{k}^{(a,b)}$ is the Jacobi polynomial of degree
$k$ with parameters $(a, b)$, $P_m$ is the Legendre polynomial of degree
$m$, and $c_{mn}$ is the normalization constant such that
\begin{align}
\int_{\tri}\abs{K_{mn}(x,y)}^2\d x \d y = 1.
\end{align}

It is observed in \cite{koorn} that the $(N+1)(N+2)/2$ functions
\begin{align}
\bigl\{K_{mn}(x,y) : n=0,\dots,N, m=0,\dots,n\bigr\}
\end{align}
form an orthogonal basis for $\P_N$ on the standard simplex $\tri$, where
$\P_N$ denotes the space of polynomials of order at most $N$ on $\tri$.
In addition, by orthogonality, we also have that 
  \begin{align}
\int_{\tri} K_{mn}(x,y) \d x\d y = 0,
  \label{for:koorn_ortho}
  \end{align}
for any $m,n\in \N_{\geq 0}$ with $m+n>0$.

\subsection{Quadrature and interpolation over triangles}
  \label{sec:vioreanu}

On a two-dimensional domain, a quadrature rule of length $n$ is optimal if
it integrates $3n$ functions (since the total number of degrees of freedom
of the rule is $3n$), and we refer to such a rule as a generalized Gaussian
quadrature rule. In general, the efficiency of a quadrature rule is defined
to be $E=\frac{m}{(d+1)n}$, where $d$ is the dimensionality of the domain,
$n$ is the length of the quadrature rule, and $m$ is the dimensionality of
the space of functions that can be integrated exactly using that rule (see
\cite{xiao} for details).

Although the construction (or even the existence) of perfect generalized Gaussian
quadrature rules over two-dimensional domains remains an open problem,
various schemes for generating nearly-perfect ones exist, e.g.,
\cite{vior,xiao}. In this section, we describe some quadrature and
interpolation schemes for polynomials over a triangle domain.

The Vioreanu-Rokhlin rule, introduced in \cite{vior},
takes two integers $N$ and $M\geq N$ as inputs, and attempts to generate
a quadrature rule of length exactly $\dim \P_N$ that integrates all
functions in $\P_M$ over a given convex domain exactly.
The method is based on the observation that elements of the complex spectrum
of the multiplication operator restricted to $\P_N$, acting on any convex
domain, turn out to be excellent quadrature nodes for integrating all
functions in $\P_N$ over the domain. These nodes are used as an initial
guess by the Vioreanu-Rokhlin algorithm and iteratively improved (by solving
a nonlinear least-squares problem) to integrate all
functions in $\P_M$. As a result, the generated rule is generally efficient
and of high quality.  Additionally, the set of quadrature nodes can also
serve as interpolation nodes for approximating functions in $\P_N$, since
the length of the rule equals $\dim \P_N$. To get the interpolation matrix,
we invert the matrix that maps the Koornwinder polynomial expansion
coefficients to function values at the interpolation nodes. Using the
Vioreanu-Rokhlin nodes as the interpolation nodes, the condition number
of this interpolation matrix turns out to be small, which guarantees 
that the interpolation is stable.

In the situation where interpolation is not needed, one can loosen the
restriction in the Vioreanu-Rokhlin algorithm that the length of the
quadrature rule equals $\dim \P_N$ for some $N$, and this can result in a
more efficient quadrature algorithm. This fact is used by the Xiao-Gimbutas
algorithm \cite{xiao}, which is also based on solving a nonlinear
least-squares problem.  For example, the Xiao-Gimbutas rule of length 78
integrates $\P_{20}$ exactly, while the Vioreanu-Rokhlin rule needs to have
a length equal to $\dim \P_{12}=91$ to accomplish the same job.  We tabulate
the lengths and orders of some Xiao-Gimbutas rules and Vioreanu-Rokhlin
rules in Tables \ref{tab:ord_len_quad} and \ref{tab:ord_len_interp}.

\subsection{The polar tangential angle}

In this section, we describe some concepts in differential geometry that we
make use of in the sequel.

\begin{definition}
Given a unit-speed parametrized curve $\gamma(s)$, we define the polar
tangential angle $\psi(s)$ of $\gamma$ with respect to polar coordinates
centered at $\origin$ to be 
  \begin{align}
\psi(s)=\angle (\gamma'(s),\gamma(s)-\origin),
  \end{align}
where $\gamma$ is parameterized by arc length (see Figure
\ref{fig:polar_tang}).  In other words, $\psi(s)$ represents the angle
between the tangent line to the curve at $\gamma(s)$ and the ray from
$\origin$ to the point. 
\end{definition}

\begin{figure}[h]
    \centering
    \includegraphics[width=0.40\textwidth]{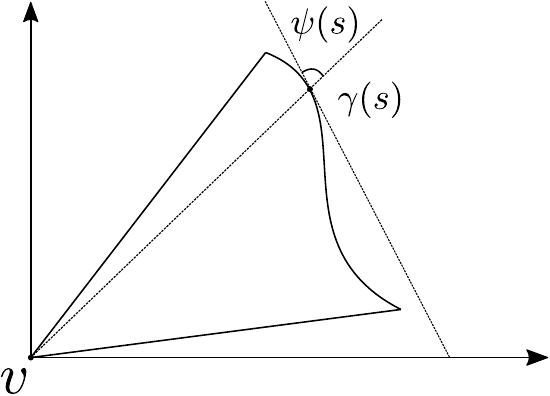}
  \caption{
      {\bf The polar tangential angle $\psi(s)$.}}
   \label{fig:polar_tang}
\end{figure}
Let the polar angle $\theta(s)$ be the angle of the point $\gamma(s)$ in
polar coordinates with respect to the origin $\origin$.  The following theorem
illustrates how to compute the derivative of the polar angle of a given
curve, given its radius and polar tangential angle. The proof of the
theorem can be found in Chapter 12 of \cite{polartan}.

\begin{theorem}
  \label{thm:thetap}
Suppose that $\gamma(s)$ is a unit-speed parametrized curve.
Suppose further that $r_0(s)$ and $\theta(s)$ are the radius and polar
angle, respectively, of $\gamma(s)$ with respect to a given polar coordinate
system. Then
\begin{align}
\theta'(s)=\frac{\sin(\psi(s))}{r_0(s)},\label{for:thetap1}
\end{align}
where $\psi(s)$ is the polar tangential angle of $\gamma$ at $\gamma(s)$.
\end{theorem}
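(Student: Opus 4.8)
The plan is to compute $\gamma'(s)$ explicitly in the polar frame and read off its transverse component. First I would introduce the moving orthonormal frame $e_r(s)=(\cos\theta(s),\sin\theta(s))$ and $e_\theta(s)=(-\sin\theta(s),\cos\theta(s))$ adapted to the polar coordinate system centered at $\origin$, so that $\gamma(s)-\origin = r_0(s)\,e_r(s)$. Using $e_r'=\theta'e_\theta$ and $e_\theta'=-\theta'e_r$, differentiation with respect to arc length gives
\[
\gamma'(s)=r_0'(s)\,e_r(s)+r_0(s)\,\theta'(s)\,e_\theta(s),
\]
which expresses the unit tangent vector in terms of a radial component $r_0'(s)$ and a transverse component $r_0(s)\theta'(s)$ in the orthonormal basis $\{e_r,e_\theta\}$.

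Next I would relate these components to $\psi$. By definition, $\psi(s)$ is the angle between the tangent $\gamma'(s)$ and the radial direction $\gamma(s)-\origin$, which points along $e_r(s)$. Since $\gamma$ has unit speed, $\gamma'(s)$ is a unit vector, so $\gamma'(s)=\cos\psi(s)\,e_r(s)+\sin\psi(s)\,e_\theta(s)$ once the orientation of $\psi$ is fixed. Matching the $e_\theta$-components against the displayed decomposition yields $r_0(s)\,\theta'(s)=\sin\psi(s)$, i.e.\ $\inner{\gamma'(s)}{e_\theta(s)}=\sin\psi(s)$. Dividing by $r_0(s)$, which is nonzero as long as $\origin$ does not lie on the curve, gives (\ref{for:thetap1}).

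I expect the principal obstacle to be the sign/orientation bookkeeping in this last step: one must measure $\psi$ as a signed angle from $e_r$ to $\gamma'$ consistently with the orientation used to define $e_\theta$, so that the transverse component equals $\sin\psi$ rather than $\pm\sin\psi$. Everything else is a routine frame computation. A convenient consistency check that the conventions are aligned is to note that the radial component gives $\cos\psi=r_0'$, so that $\cos^2\psi+\sin^2\psi=(r_0')^2+(r_0\theta')^2=\abs{\gamma'}^2=1$, which confirms both the decomposition and the unit-speed normalization.
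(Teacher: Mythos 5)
Your proof is correct and is the standard polar-frame computation; the paper itself does not prove this theorem but defers to Chapter 12 of Williamson's calculus text, where essentially the same argument (decomposing the unit tangent as $\cos\psi\,e_r+\sin\psi\,e_\theta$ and reading off $r_0\theta'=\sin\psi$) appears. The orientation caveat you flag is real but harmless here, since the paper's subsequent use in Corollary \ref{cor:thetap} takes absolute values anyway.
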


It turns out that the derivative $\theta'(s)$ can be expressed directly in
terms of $\gamma(s)$ and $r_0(s)$.

\begin{corollary}
  \label{cor:thetap}
Suppose that $\gamma(s)$ is a unit-speed parametrized curve.
Suppose further that $r_0(s)$ and $\theta(s)$ are the radius and polar
angle, respectively, of $\gamma(s)$ with respect to a given polar coordinate
system centered at $\origin$. Then

  \begin{align}
\theta'(s)=\frac{\abs{(\gamma(s)-\origin)\times \gamma'(s)}}{r_0^2(s)}.
  \end{align}
\end{corollary}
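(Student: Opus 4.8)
The plan is to obtain the corollary directly from Theorem \ref{thm:thetap} by rewriting the factor $\sin(\psi(s))$ in terms of the two-dimensional cross product. The starting point is the elementary identity that, for any two vectors $u,v\in\R^2$, one has $\abs{u\times v}=\norm{u}\,\norm{v}\sin\phi$, where $\phi\in[0,\pi]$ is the (unsigned) angle between $u$ and $v$; here $u\times v$ denotes the scalar $u_1v_2-u_2v_1$, i.e.\ the $z$-component of the corresponding three-dimensional cross product.

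I would apply this identity with $u=\gamma(s)-\origin$ and $v=\gamma'(s)$. By the definition of the polar tangential angle, the angle $\phi$ between these two vectors is exactly $\psi(s)$, so that $\abs{(\gamma(s)-\origin)\times\gamma'(s)}=\norm{\gamma(s)-\origin}\,\norm{\gamma'(s)}\sin(\psi(s))$. Two simplifications then apply: first, $\norm{\gamma(s)-\origin}=r_0(s)$ by the definition of the radius; and second, $\norm{\gamma'(s)}=1$ since $\gamma$ is parametrized by arc length. This yields $\abs{(\gamma(s)-\origin)\times\gamma'(s)}=r_0(s)\sin(\psi(s))$.

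Substituting into Theorem \ref{thm:thetap} gives $\theta'(s)=\sin(\psi(s))/r_0(s)=r_0(s)\sin(\psi(s))/r_0^2(s)=\abs{(\gamma(s)-\origin)\times\gamma'(s)}/r_0^2(s)$, which is the claim. The only point requiring care is the treatment of signs: the convention that the angle between two vectors lies in $[0,\pi]$ makes $\sin(\psi(s))$ nonnegative, so it coincides with $\abs{\sin(\psi(s))}$ and is consistent with the absolute value placed on the cross product. Since the derivation is essentially a one-line substitution once the cross-product identity is in place, I do not expect a substantial obstacle; the only thing genuinely worth verifying is that the angle appearing in the cross-product identity is indeed the polar tangential angle $\psi(s)$, and not a supplementary or signed variant of it.
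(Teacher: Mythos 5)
Your proposal is correct and follows essentially the same route as the paper: the paper's proof likewise invokes the cross-product identity to write $\abs{(\gamma(s)-\origin)\times \gamma'(s)}=r_0(s)\sin(\psi(s))$ (using unit speed) and then substitutes into Theorem \ref{thm:thetap}. Your additional remarks on the sign convention and the identification of the angle with $\psi(s)$ are accurate and only make the argument more explicit.
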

\begin{proof}
By definition of the cross product, we have that
  \begin{align}
\abs{(\gamma(s)-\origin)\times \gamma'(s)}=r_0(s)\sin(\psi(s)).\label{for:thetap3}
  \end{align}
By combining (\ref{for:thetap1}) and (\ref{for:thetap3}), we prove the result.
\end{proof}

\subsection{Approximation of analytic functions by polynomials}
In this section, we describe several concepts in approximation theory that
we make use of in the sequel.

\begin{definition}
Given a real number $\rho>1$, the Bernstein ellipse with parameter $\rho$ is
defined to be the ellipse
  \begin{align}
\Bigl\{(z+z^{-1})/2:z= \rho e^{i\theta}, \theta\in[0,2\pi)\Bigr\}.
  \end{align}
Furthermore, we denote the interior of the Bernstein ellipse by $E_\rho$. In
a slight abuse of notation, we also refer to $E_\rho$ as the Bernstein
ellipse, when the meaning is clear.
\end{definition}

The proof of the following two theorems can be found in \cite{nick}.
\begin{theorem}
  \label{thm:bern}
Suppose $f$ is a function on $[-1,1]$ for which there exist
a sequence of polynomials $q_0,q_1,\dots$, where $q_n$ is a polynomial
of order $n$, satisfying 
  \begin{align}
\norm{f-q_n}_{L^{\infty}[-1,1]}\leq C\rho^{-n},
  \end{align}
for some integer $n\geq 0$, and some real numbers $\rho>1, C>0$. Then $f$
can be analytically continued to an analytic function in $E_\rho$.
\end{theorem}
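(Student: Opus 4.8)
The plan is to realize $f$ as a uniformly convergent series of polynomials on each compact filled Bernstein ellipse $\overline{E_r}$ with $1 < r < \rho$, and then to invoke the principle that a locally uniform limit of analytic functions is analytic. First I would telescope the approximating sequence, writing $f = q_0 + \sum_{n=1}^\infty (q_n - q_{n-1})$ on $[-1,1]$, which is legitimate since the hypothesis forces $\norm{f - q_n}_{L^\infty[-1,1]} \to 0$. Each summand $d_n := q_n - q_{n-1}$ is a polynomial of degree at most $n$, and the triangle inequality together with the hypothesis gives the real-axis bound $\norm{d_n}_{L^\infty[-1,1]} \leq \norm{q_n - f}_{L^\infty[-1,1]} + \norm{f - q_{n-1}}_{L^\infty[-1,1]} \leq C\rho^{-n} + C\rho^{-(n-1)} = C(1 + \rho)\rho^{-n}$.

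The crux, and the step I expect to be the main obstacle, is a growth estimate converting a sup-norm bound on $[-1,1]$ into a bound on the whole region $\overline{E_r}$: namely, if $p$ is a polynomial of degree at most $n$ with $\norm{p}_{L^\infty[-1,1]} \leq M$, then $\sup_{z \in \overline{E_r}} \abs{p(z)} \leq M r^n$ for every $r > 1$. The natural route is through the inverse Joukowski map $\Phi(w) = w + \sqrt{w^2 - 1}$, which maps $\C \setminus [-1,1]$ conformally onto $\{\abs{z} > 1\}$, sends the boundary $\partial E_r$ to the circle $\abs{z} = r$, and is nonvanishing. Then $Q(w) := p(w)/\Phi(w)^n$ is analytic on the slit plane, stays bounded at $\infty$ (where $\Phi(w) \sim 2w$, so $Q$ approaches a finite limit), and extends continuously to both sides of $[-1,1]$ with $\abs{Q} \leq M$ there, since $\abs{\Phi} \to 1$ on the slit. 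The maximum modulus principle on the exterior domain $\overline{\C} \setminus [-1,1]$, whose boundary is exactly $[-1,1]$, then gives $\abs{Q} \leq M$ everywhere; evaluating on $\partial E_r$, where $\abs{\Phi} = r$, yields $\abs{p} \leq M r^n$ there, and a second application of maximum modulus for the entire function $p$ promotes the bound from $\partial E_r$ to all of $\overline{E_r}$. Choosing the branch of $\sqrt{w^2-1}$ consistently so that $\abs{\Phi} > 1$, and carefully justifying the boundary behavior of $Q$ on the slit, are the delicate points.

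With the growth estimate in hand, combining it with the real-axis bound gives, for any fixed $r$ with $1 < r < \rho$ and all $z \in \overline{E_r}$, the uniform estimate $\abs{d_n(z)} \leq C(1+\rho)(r/\rho)^n$. Since $r/\rho < 1$, the Weierstrass $M$-test shows that $q_0 + \sum_{n \geq 1} d_n$ converges uniformly on $\overline{E_r}$ to a function that is continuous on $\overline{E_r}$, analytic on $E_r$ as a locally uniform limit of polynomials, and that agrees with $f$ on $[-1,1]$. Because $r \in (1,\rho)$ is arbitrary and $E_\rho = \bigcup_{1 < r < \rho} E_r$, these locally defined continuations agree on overlaps and patch together into a single analytic extension of $f$ on all of $E_\rho$, which is exactly the claim.
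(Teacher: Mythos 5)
Your argument is correct and is essentially the standard proof of this converse-of-Bernstein theorem: the paper itself gives no proof, deferring to \cite{nick}, where the same telescoping decomposition combined with the Bernstein--Walsh growth estimate ($\norm{p}_{L^\infty[-1,1]}\leq M$ for $\deg p\leq n$ implies $\abs{p}\leq Mr^n$ on $\overline{E_r}$) is the argument given. The only point worth flagging is that the hypothesis as printed reads ``for some integer $n\geq 0$'' where it must mean ``for all integers $n\geq 0$,'' which you have correctly interpreted.
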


\begin{theorem}
  \label{thm:anal_conv}
Suppose that an analytic function $f:[-1,1]\to\R$ is analytically continuable
to $E_\rho$, for some real number $\rho>1$. Suppose further that
$\abs{f(z)}$ is bounded inside $E_\rho$ by some constant $M$. Then, 
  \begin{align}
\norm{f-f_n}_{L^{\infty}[-1,1]}\leq \frac{2M\rho^{-n}}{\rho-1},
  \end{align}
for all $n\geq 0$, where $f_n$ denotes the $n$th order Chebyshev
projection of $f$, by which we mean the infinite expansion of $f$ in terms
of Chebyshev polynomials, truncated at the $n$th order (inclusive).
\end{theorem}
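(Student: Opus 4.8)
The plan is to work directly with the Chebyshev expansion of $f$ and to reduce the $L^{\infty}$ error of the truncated series to a bound on the tail of the Chebyshev coefficients. Writing $f(x)=\sum_{k=0}^\infty a_k T_k(x)$, where $T_k$ is the Chebyshev polynomial of degree $k$, the $n$th order Chebyshev projection is $f_n=\sum_{k=0}^n a_k T_k$, so that $f-f_n=\sum_{k=n+1}^\infty a_k T_k$. Since $\abs{T_k(x)}\leq 1$ for every $x\in[-1,1]$, the triangle inequality immediately gives $\norm{f-f_n}_{L^{\infty}[-1,1]}\leq \sum_{k=n+1}^\infty \abs{a_k}$. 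Thus the entire theorem follows once I establish the coefficient bound $\abs{a_k}\leq 2M\rho^{-k}$ for all $k\geq 1$, after which summing the resulting geometric series yields $\sum_{k=n+1}^\infty 2M\rho^{-k}=2M\rho^{-(n+1)}/(1-\rho^{-1})=2M\rho^{-n}/(\rho-1)$, exactly as claimed.

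The heart of the argument, and the step I expect to be the main obstacle, is the decay estimate on the coefficients $a_k$, which is where the hypothesis of analytic continuation to $E_\rho$ must enter. My approach is to pass to the Joukowski map $z\mapsto (z+z^{-1})/2$, which sends the circle $\abs{z}=r$ onto the Bernstein ellipse with parameter $r$ and therefore maps the annulus $1/\rho<\abs{z}<\rho$ into $E_\rho$. Setting $g(z)=f\bigl((z+z^{-1})/2\bigr)$, the analyticity of $f$ on $E_\rho$ makes $g$ analytic and single-valued on this annulus, and the classical identity $T_k(\cos\theta)=\cos(k\theta)$, rewritten with $z=e^{i\theta}$ as $T_k\bigl((z+z^{-1})/2\bigr)=(z^k+z^{-k})/2$, shows that the Laurent expansion of $g$ is $g(z)=a_0+\sum_{k=1}^\infty \tfrac{a_k}{2}(z^k+z^{-k})$. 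In other words, the Chebyshev coefficient $a_k$ is twice the Laurent coefficient of $z^k$ in $g$.

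With this identification in hand, I would extract $a_k$ by a contour integral over a circle $\abs{z}=r$ with $1<r<\rho$, namely $a_k=\frac{1}{\pi i}\oint_{\abs{z}=r} g(z)z^{-k-1}\d z$, and apply the standard Cauchy estimate. Because the circle $\abs{z}=r$ maps into $E_\rho$ under the Joukowski map and $\abs{f}\leq M$ there, we have $\abs{g(z)}\leq M$ on $\abs{z}=r$, which yields $\abs{a_k}\leq 2M r^{-k}$. Letting $r\to\rho^-$ gives the desired $\abs{a_k}\leq 2M\rho^{-k}$ for every $k\geq 1$, completing the proof. The only delicate points are verifying that the Joukowski map indeed carries the annulus onto the interior of the Bernstein ellipse, so that the contour remains inside the region where $f$ is analytic and bounded by $M$, and justifying the passage between the Laurent series of $g$ and the contour integral; both are routine consequences of the analyticity of $g$ on a closed subannulus $1\leq\abs{z}\leq r<\rho$.
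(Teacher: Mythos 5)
Your proof is correct, and it is essentially the standard argument from Trefethen's \emph{Approximation Theory and Approximation Practice}, which is exactly the reference the paper cites in lieu of giving its own proof: bound the Chebyshev coefficients by $\abs{a_k}\leq 2M\rho^{-k}$ via the Joukowski transplant, Laurent expansion, and a Cauchy estimate on circles $\abs{z}=r<\rho$, then sum the geometric tail. All the details you flag as delicate (the annulus-to-ellipse correspondence and the identification of Chebyshev with Laurent coefficients via $T_k((z+z^{-1})/2)=(z^k+z^{-k})/2$) are handled correctly.
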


The following corollary is an immediate consequence of Theorem \ref{thm:anal_conv}.
\begin{corollary}
Suppose that an analytic function $f:[-1,1]\to\R$ is analytically continuable
to $E_\rho$, for some real number $\rho>3/2$. Suppose further that
$\abs{f(z)}$ is bounded inside $E_\rho$ by $1/4$. Then, 
  \begin{align}
\norm{f-f_n}_{L^{\infty}[-1,1]}\leq \rho^{-n},
  \end{align}
for all $n\geq 0$, where $f_n$ denotes the $n$th order Chebyshev
projection of $f$, by which we mean the infinite expansion of $f$
in terms of Chebyshev polynomials, truncated at the $n$th order (inclusive).
  \label{cor:anal_conv}
\end{corollary}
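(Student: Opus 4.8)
The plan is to derive the corollary directly from Theorem~\ref{thm:anal_conv}, which already furnishes a bound of the form $\frac{2M\rho^{-n}}{\rho-1}$ under exactly the two hypotheses we are handed here, namely analytic continuability to $E_\rho$ and a uniform bound on $\abs{f}$ inside $E_\rho$. Since the corollary merely specializes that theorem to the case $M=1/4$ and strengthens the assumption on $\rho$, the only work remaining is to verify that, with this value of $M$ and under $\rho>3/2$, the leading constant $\frac{2M}{\rho-1}$ does not exceed $1$.

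Concretely, I would first invoke Theorem~\ref{thm:anal_conv} with $M=1/4$ to obtain
\begin{align}
\norm{f-f_n}_{L^{\infty}[-1,1]}\leq \frac{2(1/4)\rho^{-n}}{\rho-1}=\frac{\rho^{-n}}{2(\rho-1)},
\end{align}
valid for all $n\geq 0$. Second, I would bound the constant factor: the hypothesis $\rho>3/2$ gives $\rho-1>1/2$, hence $2(\rho-1)>1$ and therefore $\frac{1}{2(\rho-1)}<1$. Substituting this into the displayed inequality yields $\norm{f-f_n}_{L^{\infty}[-1,1]}\leq\rho^{-n}$, as claimed.

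There is no genuine obstacle here; the statement is an arithmetic specialization of the preceding theorem, and the threshold $\rho>3/2$ is precisely the value that forces the leading constant to collapse to $1$ when $M=1/4$. The one point worth flagging is that the bound $\frac{1}{2(\rho-1)}<1$ is independent of $n$, so the same estimate holds simultaneously for every $n\geq 0$; this uniformity is immediate because the factor $\frac{1}{2(\rho-1)}$ does not depend on $n$.
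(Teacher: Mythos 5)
Your proof is correct and is exactly the intended argument: the paper states the corollary as an immediate consequence of Theorem~\ref{thm:anal_conv}, and your substitution of $M=1/4$ followed by the observation that $\rho>3/2$ forces $\frac{1}{2(\rho-1)}<1$ is precisely that immediate consequence, spelled out.
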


\section{Volume potential evaluation over complicated geometries}
  \label{sec:volpot}
In this section, we describe a numerical apparatus for computing and
interpolating the volume potential
  \begin{align}
u(\target)=\iint_{\Omega} \log(\norm{\target-y})f(y)\d A_y,\label{for:volpot}
  \end{align}
where the target location $\target\in \R^2$, and $\Omega$ is a smooth planar
domain. We first discretize $\Omega$ into a triangle mesh (see
Appendices \ref{sec:distmesh}, \ref{sec:mod_distmesh} for details), and reduce the
problem to the case where the integration region is either a triangle
$\bDelta$ or a curved element $\tDelta$ (a mesh element with three sides,
one of which is curved). Furthermore, we divide each of these two problems
into three distinct subproblems: where $\target$ is far from the element
(i.e., far field interactions), where $\target$ is close to the element (i.e.,
near field interactions), where $\target$ lies within the element (i.e.,
self-interactions).
Our goal is to present a simple and robust framework for
handling these different types of the interactions. In particular, we
minimize the use of special-purpose quadrature rules in our discussion, although
it could potentially speed up the algorithm. We note that, however, our
framework is compatible with such rules.
In addition, we describe an interpolation scheme for the volume potential
over a mesh element. In the end, we describe how to couple the algorithms
with the fast multipole method (FMM) to evaluate the volume potential
generated over $\Omega$ at any given set of target locations, with linear
time complexity.

For simplicity, we refer to both standard triangles and curved elements as
mesh elements when there is no ambiguity.

\subsection{Integration over a mesh element}
  \label{sec:int_smooth}

In this section, we introduce algorithms for generating efficient
quadrature rules for integrating smooth functions over an arbitrary mesh 
element $\Delta$ (either a triangle or a curved element).

Recall that in Section \ref{sec:vioreanu}, we describe how to obtain a
nearly-perfect Gaussian quadrature rule for integrating polynomials over the
standard simplex 
  \begin{align}
\tri=\{(x,y)\in \R : 0\leq x\leq 1,0\leq y\leq 1-x\}. 
  \end{align}
Clearly, provided that an invertible and well-conditioned mapping
$\rho:\tri\to\Delta$ is given, the generation of a quadrature rule over
$\Delta$ is simple, and is described in the following theorem.

\begin{theorem}
  \label{thm:smooth_int_jac}
Given the standard simplex $\tri$ and a mesh element $\Delta$, suppose that
$\{(x_i,y_i),w_i\}_{i}$ is a set of quadrature nodes and weights
that integrates $\{K_{mn}\}_{m,n}$ exactly over $\tri$, and $\rho$ is
an invertible and well-conditioned mapping from $\tri$ to $\Delta$. 
Then,  
  \begin{align}
\bigl\{\rho(x_i,y_i),w_i\cdot \abs{J_\rho(x_i,y_i)}\bigr\}_{i}
  \end{align}
is the set of quadrature nodes and weights that integrates
$\{K_{mn}\circ \rho^{-1}\}_{m,n}$ exactly over the mesh element
$\Delta$, where $J_\rho(x)$ denotes the Jacobian determinant of $\rho$ at
$x$.
\end{theorem}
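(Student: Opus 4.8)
The plan is to obtain the result directly from the change-of-variables formula for double integrals, by pushing the quadrature rule on $\tri$ forward through the map $\rho$. Fixing a pair $(m,n)$, I would first record the identity
\[
\iint_\Delta \bigl(K_{mn}\circ\rho^{-1}\bigr)\,\d A
= \iint_\tri K_{mn}(x,y)\,\babs{J_\rho(x,y)}\,\d x\,\d y,
\]
which follows from the substitution $(u,v)=\rho(x,y)$ and the invertibility of $\rho$, using that $K_{mn}\circ\rho^{-1}\circ\rho = K_{mn}$. This reduces the claim to a statement about integration over $\tri$ alone.

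Next I would evaluate what the proposed transformed rule actually returns on the integrand $K_{mn}\circ\rho^{-1}$. Since the pushed-forward nodes are $\rho(x_i,y_i)$, the composition collapses, $\bigl(K_{mn}\circ\rho^{-1}\bigr)(\rho(x_i,y_i))=K_{mn}(x_i,y_i)$, so the weighted sum returned by the transformed rule is $\sum_i w_i\,\babs{J_\rho(x_i,y_i)}\,K_{mn}(x_i,y_i)$. Comparing this with the right-hand side of the identity above, the theorem is seen to be equivalent to the assertion that the original rule $\{(x_i,y_i),w_i\}$ integrates the product $K_{mn}\cdot\babs{J_\rho}$ exactly over $\tri$.

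The crux, and the step I expect to be the main obstacle, is therefore the Jacobian factor $\babs{J_\rho}$. When $\Delta$ is an ordinary triangle and $\rho$ is the affine map onto it, $J_\rho$ is constant, so $\babs{J_\rho}\,K_{mn}$ is merely a scalar multiple of $K_{mn}$ and the exactness is immediate from the hypothesis: $\sum_i w_i\,\babs{J_\rho}\,K_{mn}(x_i,y_i) = \babs{J_\rho}\iint_\tri K_{mn}\,\d x\,\d y = \iint_\tri \babs{J_\rho}\,K_{mn}\,\d x\,\d y$. I would present this affine case as the main argument. For a curved element $\rho$ is no longer affine and $\babs{J_\rho}$ varies over $\tri$; the same computation still goes through, but only once the rule is taken to integrate the enlarged family $\bigl\{K_{mn}\cdot\babs{J_\rho}\bigr\}$ exactly, and I would flag this dependence explicitly, since it is the one place where exactness does not come for free.
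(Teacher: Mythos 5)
Your proposal is correct and follows the same route as the paper, whose entire proof is the single sentence ``the theorem follows directly from a change of variables''; your first two displayed identities are exactly that change of variables made explicit. The added value in your write-up is the final observation: after the substitution, exactness of the transformed rule on $K_{mn}\circ\rho^{-1}$ is equivalent to exactness of the original rule on $K_{mn}\cdot\abs{J_\rho}$ over $\tri$, which is automatic only when $\abs{J_\rho}$ is constant (the affine/triangle case). For a curved element the theorem as literally stated is therefore an approximation rather than an exact identity, holding to the extent that $\abs{J_\rho}\cdot K_{mn}$ is resolved by the rule --- a caveat the paper does not make in the proof itself but tacitly concedes in Observation \ref{obs:nonsm_bjac}, where it notes that a poorly resolved curved side makes $J_\rho$ non-smooth and forces a higher-order rule. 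Flagging this explicitly, as you do, is a genuine improvement on the paper's one-line argument.
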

\begin{proof}
The theorem follows directly from a change of variables.
\end{proof}

Therefore, the problem of evaluating an integral over a mesh element reduces
to the computation of an invertible and well-conditioned mapping $\rho$ from
$\tri$ to $\Delta$. When $\Delta$ is a triangle, there exists an affine
transformation from $\tri$ to $\Delta$, so the mapping $\rho$ can be 
constructed easily, and
$\abs{J_\rho(x_i,y_i)}=\text{area}(\Delta)/\text{area}(\tri)$. When
$\Delta$ is a curved element, the blending function method
\cite{blending1,blending2,blending3} provides an elegant solution to the
problem, as is mentioned in \cite{anderson}. Below, we describe the method.

Let $\gamma:[0,L]\to\R^2$ be the unit-speed parametrization of the curved
side of a curved element $\tDelta$, and let $\origin:=(x_0,y_0)$ denote the vertex
opposite to the curved side. Then for any point $(\xi,\eta)\in \tri$, define 
  \begin{align}
\hspace*{-0em}\rho(\xi,\eta) :=&\ (1-\xi-\eta)\cdot\gamma(L) +
\xi\cdot\gamma(0)+\eta\cdot\origin\notag \\+
&\frac{1-\xi-\eta}{1-\xi}\Bigl(\gamma\bigl(L(1-\xi)\bigr) -
(1-\xi)\cdot\gamma(L)-\xi\cdot\gamma(0)\Bigr).
  \label{for:blend}
  \end{align}
It can be shown that $\rho$ is an invertible and well-conditioned mapping
from $\tri$ to $\tDelta$, and its Jacobian $J_\rho$ can be
computed by a straightforward calculation. 

\begin{remark}
When the curved side $\gamma$ is a straight line segment (i.e., the
curved element is a triangle), the mapping $\rho$ generated by the blending
function method degenerates into an affine mapping, which implies that the
discussion in this section can be unified under the blending function method
framework. For clarity and computational efficiency purposes, the discussions
of the two cases are separated.
\end{remark}

\begin{observation}
In the case where the curved side of a curved element is not well-resolved
by the mesh, the Jacobian of the blending function $\rho$ becomes non-smooth,
which requires a relatively large number of degrees of freedom to approximate.
Thus, when the requirement for high accuracy is rigid, it is important to refine
the mesh around the region where the geometry of the boundary is
complicated, or use a quadrature rule of order higher than the one for
triangle elements. 
  \label{obs:nonsm_bjac}
\end{observation}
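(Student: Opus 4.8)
The plan is to first pin down what ``non-smooth'' can mean here. Since $\gamma$ is analytic, the mapping $\rho$ of (\ref{for:blend}) is itself analytic on $\tri$: the only potentially problematic factor is $(1-\xi-\eta)/(1-\xi)$ at $\xi=1$, but the bracketed term $\gamma\bigl(L(1-\xi)\bigr)-(1-\xi)\gamma(L)-\xi\gamma(0)$ vanishes there, so the singularity is removable and $J_\rho$ is analytic as well. Consequently the observation is not a literal loss of smoothness but the statement that the number of polynomial degrees of freedom needed to approximate $J_\rho$ to a fixed tolerance $\epsilon$ grows without bound as the element under-resolves the boundary. I would make this precise through the approximation-theory machinery of Theorem \ref{thm:anal_conv} and Corollary \ref{cor:anal_conv}.

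First I would write $J_\rho$ explicitly by differentiating (\ref{for:blend}). The only curve-dependent term is $g(\xi):=\gamma\bigl(L(1-\xi)\bigr)$, whose $\xi$-derivatives obey $g^{(k)}(\xi)=(-L)^k\gamma^{(k)}\bigl(L(1-\xi)\bigr)$. Since $\gamma$ is unit-speed, $\norm{g^{(k)}}$ scales like $L^k$ times the $k$th arc-length derivative of $\gamma$, i.e.\ like $L^k$ times powers of the curvature $\kappa$ and its derivatives. Thus every derivative of $\rho$, and hence of $J_\rho$, carries a factor of the arc-length extent $L$ of the element, and a large $L$ (a coarse element relative to how much the curve bends) is exactly what ``not well-resolved'' means.

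Next I would convert this derivative growth into an analytic-continuation estimate. Viewing $\xi\mapsto g(\xi)$ on $[0,1]$ (mapped affinely to $[-1,1]$), the nearest complex singularity of $\gamma$, at distance $d$ from the real arc-length axis, is pulled to distance of order $d/L$ by the rescaling $s=L(1-\xi)$. Hence $g$, and therefore $J_\rho$ along the $\xi$-direction, continues analytically only to a Bernstein ellipse $E_\rho$ with $\rho-1=\O(d/L)$. Applying Theorem \ref{thm:anal_conv}, approximating $J_\rho$ to tolerance $\epsilon$ requires order $n\gtrsim \log(1/\epsilon)/\log\rho$, which behaves like $(L/d)\log(1/\epsilon)$ as $L/d\to\infty$. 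This shows the degrees of freedom blow up precisely as the element fails to resolve the boundary, and that the two remedies stated---refining the mesh (decreasing $L$, so the ellipse grows and $n$ drops) or raising the quadrature order $n$---both recover a fixed accuracy.

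The main obstacle is quantitative rather than structural. The reduction from bivariate approximation on $\tri$ to a one-dimensional estimate is in fact clean: $\rho$ in (\ref{for:blend}) is affine in $\eta$, so $J_\rho$ is affine in $\eta$ as well, and the Koornwinder degree needed is governed entirely by the $\xi$-direction, to which Theorem \ref{thm:anal_conv} applies after the affine rescaling. The delicate step is instead to make ``well-resolved'' precise: one must pin down the distance $d$ from $[0,L]$ to the nearest complex singularity of $\gamma$ in terms of the curvature and the analytic feature scale of the boundary, verify the uniform relation $\rho-1\sim d/L$ over $\tri$, and confirm that the removable factor $(1-\xi-\eta)/(1-\xi)$ introduces no spurious singularity that would shrink $E_\rho$ further. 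Establishing this uniform estimate is where the real care is required.
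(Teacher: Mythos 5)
The statement you are proving is an \emph{Observation} in the paper, and the authors supply no proof at all: it is asserted as a practical remark motivating boundary-adapted mesh refinement (cf.\ Observations \ref{obs:adap_mesh} and \ref{obs:curva_fh}), so any honest justification is necessarily ``a different route.'' Your sketch is a sensible and essentially sound formalization, and it correctly reuses the paper's own machinery (Theorem \ref{thm:anal_conv}) in a place where the authors did not bother to. Your structural observations check out: the bracketed term in (\ref{for:blend}) does vanish to first order at $\xi=1$, so the factor $(1-\xi-\eta)/(1-\xi)$ is removable; $\rho$ and hence $J_\rho$ are affine in $\eta$, so the problem really is one-dimensional in $\xi$; and the rescaling $s=L(1-\xi)$ does pull the nearest complex singularity of $\gamma$ to distance $\O(d/L)$, giving $n\gtrsim (L/d)\log(1/\epsilon)$ degrees of freedom. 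Two caveats are worth flagging. First, your entire argument presupposes $\gamma$ analytic, whereas the paper only assumes a piecewise smooth boundary; if a genuine breakpoint of the boundary parametrization falls in the interior of a curved side (one natural reading of ``not well-resolved''), then $J_\rho$ is \emph{literally} non-smooth and the correct tool is a Jackson-type algebraic rate, not a Bernstein ellipse --- your mechanism and this one are complementary, and the observation as written covers both. Second, your identification of ``well-resolved'' with $L/d$ small is a modelling choice you would need to state as a hypothesis rather than derive; you acknowledge this, and it is the right thing to be careful about. Neither caveat is a gap in logic --- they are gaps between your precise statement and the paper's deliberately informal one.
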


\subsection{Far field interactions}
  \label{sec:far_int}
In this section, we introduce far field quadrature rules for computing the
volume potential (\ref{for:volpot}) when the integration domain $\Omega$ is
a mesh element, and the target location $\target$ is in the far field of the
domain $\Omega$ (formally defined at the end of this section). In this case,
the integrand of (\ref{for:volpot}) is a smooth function, so the quadrature
rules described in Section \ref{sec:int_smooth} are applicable.  

Given an arbitrary mesh element $\Delta$ (either a triangle or a curved
element), and an invertible and well-conditioned mapping
$\rho$ from the standard simplex $\tri$ to $\Delta$ (see Section
\ref{sec:int_smooth}), we have that, by Theorem \ref{thm:smooth_int_jac},
  \begin{align}
\hspace*{-2em}\iint_{\Delta} \log(\norm{\target-y})f(y)\d A_y\approx \sum_{i=1}^{N}
w_i\cdot \abs{J_\rho(\tilde y_i)}\cdot\log(\norm{\target-\rho(\tilde
y_i)})\cdot f(\rho(\tilde y_i)),
  \end{align}
where $\tilde y_i=(x_i,y_i)$, and
$\{(x_i,y_i),w_i\}_{i=1,2,\dots,N}$
is a set of quadrature rule over $\tri$ (see Section \ref{sec:vioreanu}).

\begin{remark}
In general, it is preferable to have the mesh elements to be almost equilateral,
in which case the basis function has the same amount of expressibility 
in the $x$- and $y$-directions.
\end{remark}

We now give a precise definition of the set of target points in the far field and
near field of a mesh element. Additionally, we provide a dual definition
describing the set of mesh elements in the far field and near field of a
target point.

\begin{definition}
  \label{def:far_near_field}
Given a mesh element $\Delta$ (either a triangle or a curved element), a set of
possible densities $S$, a far field quadrature rule of order $N$,
and an error tolerance $\varepsilon$, the far field of the mesh element,
denoted by $\ft_{\Delta}$, is the set of targets in $\R^2$ where the volume
potential generated by a density $f\in S$ over the element can be computed
up to precision $\varepsilon$ by applying the given far field quadrature
rule. The near field of the mesh element, denoted by $\nt_{\Delta}$, is the
set of locations that do not belong to either the far field of the mesh
element, or the mesh element itself. 
\end{definition}

\begin{definition}
  \label{def:nearby_mesh_type}
Given a target location $x$ and a set of mesh element $\mathfrak{F}$, we define  
$\st(x)\in\mathfrak{F}$ to be the element that $x$ lies within. Furthermore,
given a far field quadrature rule of order $N$, a set of
possible densities, and an error tolerance
$\varepsilon$, we define $\nt(x)\subseteq \mathfrak{F}\setminus{\st(x)}$ to be the
set of all elements whose near fields contain $x$ (see Definition
\ref{def:far_near_field}), and define $\ft(x):=\mathfrak{F}\setminus(\st(x)\cup
\nt(x))$. With a slight abuse of notation, we refer to $\ft(x)$ and $\nt(x)$
as the far field and near field of $x$, respectively.
\end{definition}

\subsection{Near field interactions}
  \label{sec:near_int}
In this section, we introduce an algorithm for computing the volume
potential (\ref{for:volpot}) when the integration domain $\Omega$ is a mesh
element, and the target location $\target$ is in the near field of $\Omega$
(see Definition \ref{def:far_near_field} for the definition of the near field).
In this case, the integrand of (\ref{for:volpot}) is nearly-singular, so the
far field quadrature rule described in Section \ref{sec:far_int} will not
achieve sufficient accuracy. Below, we describe an adaptive algorithm for
resolving the near-singularity in the integrand. 

\begin{figure}[h]
    \centering
    \includegraphics[width=0.25\textwidth]{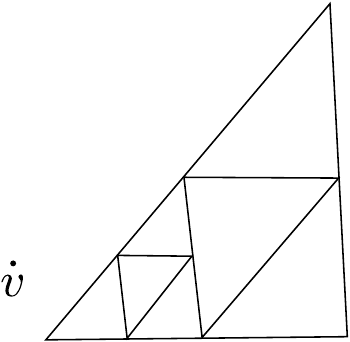}
  \caption{
      {\bf A subdivided triangle mesh element during the computation
      of near field interactions}.  }
   \label{fig:tri_subdiv}
\end{figure}

For clarify, we denote the integration domain (i.e., a mesh element) by $\Delta$.
Furthermore, suppose that $\rho$ is an invertible and well-conditioned
mapping from the standard simplex $\tri$ to $\Delta$. Firstly, we recursively
subdivide $\tri$, as is shown in Figure \ref{fig:tri_subdiv}, until all the
sub-simplexes are mapped to sub-mesh elements that belong to $\ft(\target)$
(see Definition \ref{def:nearby_mesh_type}). More formally, if we let
  \begin{align}
  \{\tri_i: i=1,2,\dots,N\}
  \end{align}
denote the set of such sub-simplexes, we have that $\bigl\{\tri_i\bigr\}_i$
are disjoint, and $\cup_{i=1}^N \tri_i=\tri$.  Therefore,
  \begin{align}
\hspace*{-2em}\iint_{\Delta} \log(\norm{\target-y})f(y)\d A_y=
\iint_{\tri} \log(\norm{\target-\rho(y)})f(\rho(y))\abs{J_\rho(y)}\d A_y\notag\\
\hspace*{-2em}=\sum_{i=1}^N
\iint_{\tri_i} \log(\norm{\target-\rho(y)})f(\rho(y))\abs{J_\rho(y)}\d A_y.
\label{for:sum_near}
  \end{align}
Furthermore, since the target $\target$ is now located in the far field of
$\rho(\tri_i)$ for all $i$, the integral
  \begin{align}
\iint_{\tri_i} \log(\norm{\target-\rho(y)})f(\rho(y))\abs{J_\rho(y)}\d A_y
  \end{align}
can be computed both accurately and efficiently using the quadrature rule
over triangles (see Section \ref{sec:vioreanu}). Thus, the near-field 
interactions over a triangle can be computed accurately via (\ref{for:sum_near}),
despite the near-singularity in the integrand.

The determination of the far field $\ft(\target)$ turns out to be important
for the speed of the near interaction computation. We postpone the
discussion to Section \ref{sec:nf}.

\begin{observation}
The order of the quadrature rule used in the near interaction computation is
unrelated to the order of the far field quadrature rule. In
fact, the order of the far field quadrature rule only affects whether or 
not a target is in the far field of some mesh element. Since the near field
quadrature rule is used in a recursively subdivided partition, its order
should be modest. 
\end{observation}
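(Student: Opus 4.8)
The plan is to disentangle the two distinct functions served by a quadrature rule in the near field computation of Section~\ref{sec:near_int}: the far field rule of order $N$ serves to \emph{classify} a target as lying in the far or near field of an element, whereas the near field rule serves to \emph{evaluate} the subdivided integral in (\ref{for:sum_near}). I would argue that $N$ enters only the former, and that the order of the latter is dictated solely by the geometric separation achieved through subdivision, hence is independent of $N$ and may be kept modest.

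To establish the second sentence, I would observe that, by Definition~\ref{def:far_near_field}, the order $N$ appears only in the characterization of the far field $\ft_\Delta$: a target belongs to $\ft_\Delta$ exactly when the order-$N$ rule attains the tolerance $\varepsilon$ over $\Delta$. Consequently, $N$ governs which (target, element) pairs are deemed ``near'' and thus routed to the subdivision algorithm, but it does not appear anywhere in the quadrature sum applied to each sub-simplex $\tri_i$ in (\ref{for:sum_near}). In this precise sense, the order of the far field rule affects only whether a target is classified as being in the far field of an element.

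For the first and third sentences, I would examine the rule applied on each sub-simplex. After the recursive subdivision terminates, the target $\target$ is separated from each sub-element $\rho(\tri_i)$, so the integrand $\log(\norm{\target-\rho(y)})f(\rho(y))\abs{J_\rho(y)}$, regarded as a function on $\tri_i$, is analytic: its logarithmic singularity at $y=\rho^{-1}(\target)$ lies strictly outside $\rho(\tri_i)$. By the Bernstein-ellipse theory of Theorem~\ref{thm:anal_conv} and Corollary~\ref{cor:anal_conv}, the quadrature error over $\tri_i$ for a rule of order $N'$ decays like $\rho_i^{-N'}$, where the analyticity parameter $\rho_i>1$ is set by the ratio of the distance from $\target$ to $\rho(\tri_i)$ to the diameter of $\rho(\tri_i)$. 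Since this ratio is a property of the subdivided partition and not of $N$, a single modest order $N'$, fixed in advance to meet $\varepsilon$ at the guaranteed separation, suffices on every piece. This is why the near field rule may be taken of modest order, unrelated to $N$.

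The main obstacle I anticipate is that, as literally phrased, the subdivision in Section~\ref{sec:near_int} terminates once each sub-element lies in $\ft(\target)$, a criterion tied to the order-$N$ rule; a lower-order rule $N'$ might then fail to meet $\varepsilon$ on a sub-simplex that only marginally satisfies the order-$N$ criterion. I would resolve this by noting that the subdivision criterion and the per-sub-simplex accuracy are logically decoupled: one simply continues subdividing until the chosen order-$N'$ rule attains $\varepsilon$ on each piece, and the number of extra levels needed is controlled by $N'$ and $\varepsilon$ through the same $\rho_i^{-N'}$ estimate. Hence $N'$ is genuinely independent of $N$, and remains modest precisely because the recursive subdivision can render the integrand arbitrarily benign on each sub-simplex.
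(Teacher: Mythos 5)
Your proposal is correct and takes essentially the same approach as the paper, which states this as an unproved observation whose justification is implicit in Sections~\ref{sec:near_int} and \ref{sec:nf}: the far field rule of order $N$ enters only through the classification in Definitions~\ref{def:far_near_field} and \ref{def:nearby_mesh_type}, while the subdivided integrals are handled by a separate modest-order rule whose accuracy is governed by the Bernstein-ellipse separation achieved through recursion --- exactly the machinery the paper deploys when it applies the near field geometry analysis to the subdomains to decide the number of subdivisions. Your resolution of the apparent circularity (that termination of the subdivision must be judged with respect to the near rule's own order $N'$, not $N$) is precisely how the paper's algorithm operates in practice, as confirmed by the experiments where $\nordn=12$ is held fixed while $\nordf$ ranges up to $50$.
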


\begin{remark}
It is suggested in \cite{leslie} that one should store the function values of
the density at the quadrature nodes of the subdivided triangles used
in the computation of near field interactions, such that these values can be
reused in future near-field computations. This approach can lead to a
large improvement in the performance, since it avoids lots of recurring
evaluations of the density function. In large-scale parallel applications,
this runs the risk of turning a compute-bound task into a memory-bound task,
which can cause a degradation in performance. We note that this technique
is not employed in our implementation.
\end{remark}

\subsection{Self-interactions}
  \label{sec:self_int}

In this section, we introduce an algorithm for computing the volume
potential (\ref{for:volpot}) when the integration domain $\Omega$ is a
mesh element, and the target $\target$ lies within the
integration domain.  In this case, the integrand of (\ref{for:volpot}) has a
singularity at $y=\target$, so some special treatment is required to resolve it.

Suppose that the integration domain $\Omega$ is a triangle with
vertices $A,B,C\in \R^2$, and $\target:=(x_0,y_0)\in\R^2$ is a target that lies
within $\Omega$. We then rewrite the integral (\ref{for:volpot}) as
  \begin{align}
\iint_{\Omega} \log(\norm{\target-y})f(y)\d A_y=\sum_{i=1}^3\iint_{\bDelta_i}
\log(\norm{\target-y})f(y)\d A_y,
  \end{align}
where $\bDelta_i$ is a subtriangle with vertices given by $\target$ and two
of $A,B,C$. When $\Omega$ is a curved element,
one of $\{\bDelta_i\}_{i=1,2,3}$ also becomes a curved element
$\tDelta_i$ with $\target$ being the vertex opposite to the curved
side, while the other two remain subtriangles (see Figure~\ref{fig:self_subdiv}).
Therefore, the problem of computing self-interactions reduces to the problem
of computing 
  \begin{align}
\iint_{\Delta}\log(\norm{\target-y})f(y)\d A_y,\label{for:volpot_self}
  \end{align}
where the element $\Delta$ may or may not have a curved side, and $\target$
is a vertex of $\Delta$ (specifically, the vertex opposite to the curved side,
if a curved side exists). With a slight abuse of
notation, we refer to the side opposite to the vertex $\target$ as the
curved side, despite that it could be a straight line.

\begin{figure}[h]
    \centering
    \includegraphics[width=0.40\textwidth]{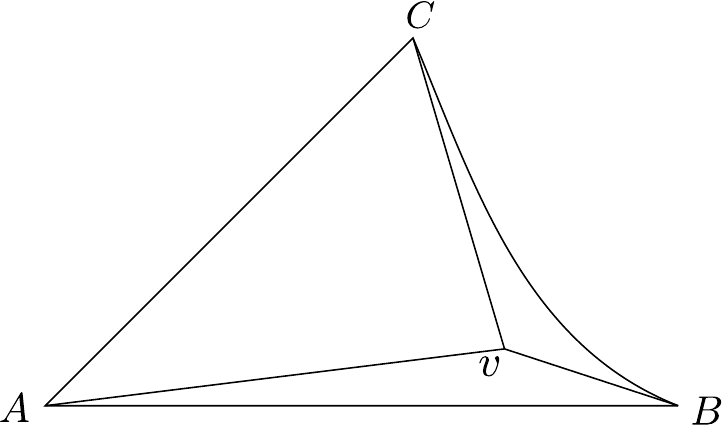}
  \caption{
      {\bf A subdivided curved element during the computation of
      self-interactions}. }
   \label{fig:self_subdiv}
\end{figure}

To evaluate the self-interaction, we first simplify the integrand in
(\ref{for:volpot_self}) by rewriting the double integral in a different
coordinate system. There are many possible coordinates to use, and we find 
that the so-called radius-arc length coordinates have many desirable
properties, which we make use of later on.

Let $\gamma:[0,L]\to\R^2$ be the unit-speed parametrization of the
curved side of $\Delta$. We write
  \begin{align}
\hspace{-2em}\Delta=\bigl\{(x_0+r\cos(\theta(s)),y_0+r\sin(\theta(s))):0\leq s\leq
L,0\leq r\leq r_0(s)\bigr\},\label{for:tdelta_domain}
  \end{align}
where $r_0(s)$ and $\theta(s)$ are the radius and polar angle of the curved
side $\gamma$ of $\Delta$ with respect to polar coordinates centered at
$\target:=(x_0,y_0)$, and $s\in [0,L]$ represents the arc length parameter
of the curved boundary. We also denote the inverse of $\theta(s)$ by
$s(\theta)$.

\begin{theorem}
  \label{thm:tensor_smooth_new}
Suppose that $\Delta$ is a mesh element as defined in (\ref{for:tdelta_domain}),
and $f:\Delta\to \R$ is a function, such that $\log(\norm{\target-y})\cdot f(y)$
is integrable over $\Delta$. Supposing further that $\theta(s)$ is strictly
monotone, we have that
  \begin{align}
\hspace*{-6.5em}\iint_{\Delta}\log(\norm{\target-y})f(y)\d A_y
=\int_{0}^{L}\Bigl(\int_0^1 \log(r_0(s)\tilde r)\cdot f\bigl(y(\tilde r,s)\bigr)
\cdot \tilde r \d \tilde r \Bigr)\cdot \babs{(\gamma(s)-\target)\times \gamma'(s)}\d
s,
  \label{for:tensor_smooth_new}
  \end{align}
where
\begin{align}
y(\tilde r,s)=\begin{pmatrix}
x_0+r_0(s)\tilde r\cos(\theta(s)) \\
y_0+r_0(s)\tilde r\sin(\theta(s))
\end{pmatrix}.\label{for:srmap_new}
\end{align}
Clearly, the function $f\bigl(y(\tilde r,s)\bigr)$ is
smoother than $f$, since the mapping from radius-arc length coordinates to
Cartesian coordinates is smooth (its inverse is not).
\end{theorem}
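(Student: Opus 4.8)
The plan is to establish (\ref{for:tensor_smooth_new}) as a change of variables from Cartesian coordinates to the radius--arc length coordinates $(\tilde r, s)\in[0,1]\times[0,L]$ determined by the map $y(\tilde r, s)$ of (\ref{for:srmap_new}). First I would verify that this map is a legitimate change of variables, i.e., that it is a bijection from the interior of $[0,1]\times[0,L]$ onto the interior of $\Delta$ with the apex $\target$ removed. This is exactly where the hypothesis that $\theta(s)$ is strictly monotone is used: monotonicity makes $s\mapsto\theta(s)$ injective, so the rays emanating from $\target$ at angles $\theta(s)$ sweep out the angular sector exactly once, and each such ray meets the curved side $\gamma$ at the single radius $r_0(s)$. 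Hence $\Delta$ is star-shaped with respect to $\target$ and is covered exactly once by the parametrization (\ref{for:tdelta_domain}).

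Next I would compute the Jacobian determinant $J$ of the map $(\tilde r,s)\mapsto y(\tilde r,s)$. Differentiating (\ref{for:srmap_new}) directly gives the four partial derivatives, and upon forming the determinant the terms containing $r_0'(s)$ cancel, leaving
  \begin{align}
\abs{J(\tilde r, s)} = r_0^2(s)\,\tilde r\,\abs{\theta'(s)}.\notag
  \end{align}
I would then invoke Corollary \ref{cor:thetap} with $\origin=\target$ to write $\abs{\theta'(s)} = \abs{(\gamma(s)-\target)\times\gamma'(s)}/r_0^2(s)$ (the absolute value is justified by (\ref{for:thetap3}) even if $\theta$ is decreasing, and strict monotonicity guarantees $\theta'$ has constant sign). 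The two factors of $r_0^2(s)$ cancel, yielding $\abs{J(\tilde r, s)} = \tilde r\,\abs{(\gamma(s)-\target)\times\gamma'(s)}$.

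The remaining ingredient is the observation that $\norm{\target-y(\tilde r,s)}=r_0(s)\tilde r$, which is immediate from (\ref{for:srmap_new}) since $y(\tilde r,s)-\target = r_0(s)\tilde r\,(\cos\theta(s),\sin\theta(s))$ has norm $r_0(s)\tilde r$. Substituting the change of variables into the left-hand side, replacing $\d A_y$ by $\abs{J}\,\d\tilde r\,\d s$ and the log and density accordingly, produces the integrand $\log(r_0(s)\tilde r)\,f\bigl(y(\tilde r,s)\bigr)\,\tilde r\,\abs{(\gamma(s)-\target)\times\gamma'(s)}$. Since the cross-product factor depends only on $s$, factoring it out of the inner $\tilde r$-integral gives exactly (\ref{for:tensor_smooth_new}).

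I expect the main obstacle to be justifying the change-of-variables theorem near the apex rather than the algebra itself: the map degenerates on the edge $\{\tilde r=0\}$, which collapses to the single point $\target$ and where $J$ vanishes, and the integrand carries a logarithmic singularity there. I would handle this by excising a small neighborhood of $\target$, applying the change-of-variables theorem on the diffeomorphic remainder, and passing to the limit, using the hypothesis that $\log(\norm{\target-y})f(y)$ is integrable over $\Delta$ (the extra factor of $\tilde r$ in the Jacobian makes the transformed integrand integrable as $\tilde r\to 0$) to control the excised piece.
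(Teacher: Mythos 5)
Your proposal is correct and follows essentially the same route as the paper: both arguments amount to the change of variables into radius--arc length coordinates, use the strict monotonicity of $\theta(s)$ to ensure the rays from $\target$ sweep $\Delta$ exactly once, and invoke Corollary \ref{cor:thetap} to replace $r_0^2(s)\,\theta'(s)$ by $\abs{(\gamma(s)-\target)\times\gamma'(s)}$; the only difference is that you compute the $2\times 2$ Jacobian of the composite map directly, whereas the paper factors the substitution through normalized polar coordinates and then performs the one-dimensional change $\theta=\theta(s)$. Your additional care about the degeneracy at $\tilde r=0$ and the excision argument near the apex is a welcome refinement that the paper's proof leaves implicit.
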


\begin{proof}
By expressing the double integral in polar coordinates with the radius
normalized to one, which is legitimate because of the assumption that $\theta(s)$
is strictly monotone, we get
  \begin{align}
&\iint_{\Delta}\log(\norm{\target-y})f(y)\d
A_y\notag\\
&=\int_{\theta(0)}^{\theta(L)}\int_0^1 \log(\norm{\target-y(\tilde
r,s(\theta))})\cdot f\bigl(y(\tilde r,s(\theta))\bigr) \cdot r_0(s(\theta))^2
\cdot \tilde r \d \tilde r \d \theta\notag\\
&= \int_{\theta(0)}^{\theta(L)}\int_0^1 \log\bigl(r_0(s(\theta))\tilde r\bigr)\cdot
f\bigl(y(\tilde r,s)\bigr) \cdot r_0(s(\theta))^2 \cdot \tilde r \d \tilde r
\d \theta.\label{for:polar_tensor_new}
  \end{align}

Then, by a change of variables $\theta=\theta(s)$ and Corollary
\ref{cor:thetap}, (\ref{for:polar_tensor_new}) becomes 
  \begin{align}
&\hspace*{-4em}
\iint_{\Delta}\log(\norm{\target-y})f(y)\d A_y =
\int_{0}^{L}\int_0^1
\log(r_0(s)\tilde r)\cdot f\bigl(y(\tilde r,s)\bigr)\cdot\tilde
r_0(s)^2\cdot\tilde r \cdot \theta'(s)\d \tilde r  \d s\notag\\
&=\int_{0}^{L}\int_0^1
\log(r_0(s)\tilde r)\cdot f\bigl(y(\tilde r,s)\bigr)\cdot \tilde r
\cdot \babs{(\gamma(s)-\target)\times \gamma'(s)}\d \tilde r  \d s\notag\\
&=\int_{0}^{L}\Bigl(\int_0^1 \log(r_0(s)\tilde r)\cdot f\bigl(y(\tilde r,s)\bigr)
\cdot \tilde r \d \tilde r \Bigr)\cdot \babs{(\gamma(s)-\target)\times \gamma'(s)}\d
s.
  \end{align}

\end{proof}

\begin{figure}[h]
    \centering
    \includegraphics[width=0.99\textwidth]{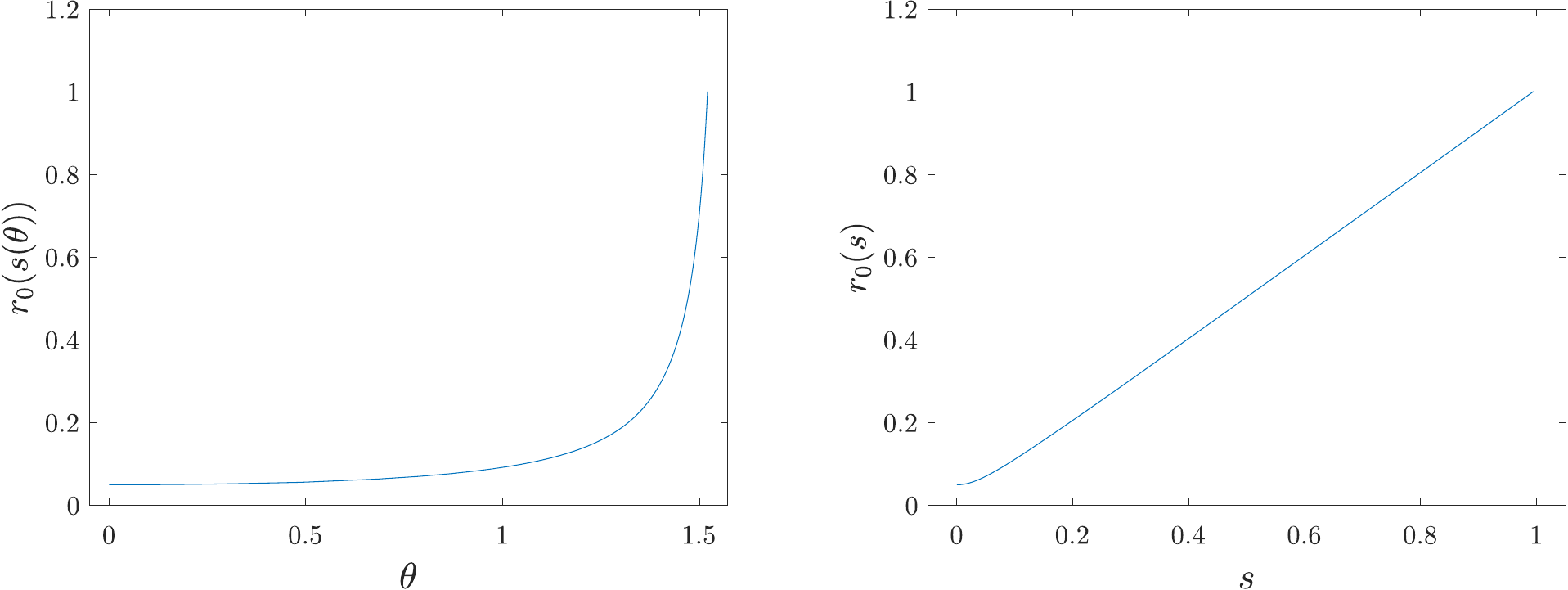}
  \caption{
      {\bf The mapping $r_0(s)$ is better-conditioned than
      $r_0(s(\theta))$}.  In this example, the curved side is the line
      segment connecting $(1,0)$ and $(1,1)$, and the origin of the polar
      coordinate system is $v=(0,0)$.}
   \label{fig:ill_map}

\end{figure}

\begin{observation}
  \label{obs:condition_number_new}
It is also possible to express the double integral over $\Delta$ in polar
coordinates (see formula (\ref{for:polar_tensor_new})). However, when $\Delta$
is stretched, the mapping from the polar angle to the radius (i.e.,
$r_0(s(\theta))$) is ill-conditioned, while the mapping from the arc length
parameter to the radius (i.e., $r_0(s)$) is always well-conditioned (see
Figure \ref{fig:ill_map}).
\end{observation}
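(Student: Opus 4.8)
The plan is to compute the derivatives of the two maps $s\mapsto r_0(s)$ and $\theta\mapsto r_0(s(\theta))$ explicitly in terms of the polar tangential angle $\psi(s)$ about $\target$, and to show that the arc-length map has derivative uniformly bounded by $1$, whereas the polar-angle map has derivative equal to $r_0(s)\cot\psi(s)$, which is unbounded precisely in the stretched regime. Since a scalar map is well-conditioned for interpolation and quadrature exactly when its derivatives are controlled (equivalently, when the rescaled map admits a wide strip of analyticity, cf.\ Corollary \ref{cor:anal_conv}), this derivative comparison is the content of the observation, and it is exactly what Figure \ref{fig:ill_map} depicts.

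First I would differentiate $r_0(s)=\norm{\gamma(s)-\target}$ with respect to arc length. Since $\gamma$ is unit-speed and $\psi(s)=\angle(\gamma'(s),\gamma(s)-\target)$, the chain rule gives $r_0'(s)=\inner{\gamma'(s)}{(\gamma(s)-\target)/r_0(s)}=\cos\psi(s)$, so that $\abs{r_0'(s)}\le 1$ for every $s$ and every element, with equality only where the curve runs radially and with no dependence whatsoever on the shape of $\Delta$. Differentiating once more and using $\psi'(s)=\kappa(s)-\theta'(s)$ shows that $r_0''(s)=-\kappa(s)\sin\psi(s)+\sin^2\psi(s)/r_0(s)$ is bounded by $\abs{\kappa(s)}+1/\dist(\target,\gamma)$; since $\gamma$ is a smooth curve staying a fixed positive distance from its opposite vertex $\target$, all derivatives of $r_0(s)$ are controlled by the intrinsic curvature of $\gamma$ and by $\dist(\target,\gamma)$ alone. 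This is the sense in which $r_0(s)$ is always well-conditioned.

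Next I would reparametrize by the polar angle. Using Theorem \ref{thm:thetap}, namely $\theta'(s)=\sin\psi(s)/r_0(s)$, the inverse function theorem yields
\[
\frac{d}{d\theta}\,r_0(s(\theta))=\frac{r_0'(s)}{\theta'(s)}=\frac{\cos\psi(s)}{\sin\psi(s)/r_0(s)}=r_0(s)\cot\psi(s).
\]
The final step is the geometric fact that stretching $\Delta$ forces $\psi(s)\to 0$ (or $\psi(s)\to\pi$) somewhere along the curved side: a long, thin element with apex $\target$ has its opposite side nearly aligned with a ray emanating from $\target$, so the tangent $\gamma'(s)$ becomes nearly radial there. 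In this regime $\cot\psi(s)$, and hence $dr_0/d\theta$, grows without bound, with the factor $r_0(s)$ amplifying the effect when the side is far from $\target$. Equivalently, $\theta'(s)$ becomes tiny, so $s(\theta)$ acquires a large slope and $r_0(s(\theta))$ inherits a near-singularity whose strip of analyticity collapses toward the interval, destroying the conditioning.

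The hard part will be fixing a precise meaning of ``well-conditioned'' so that the comparison is a genuine statement rather than merely a picture. I would take the criterion to be a bound on the derivatives of the rescaled map (equivalently, a lower bound on its Bernstein-ellipse parameter in the language of Corollary \ref{cor:anal_conv}): the estimate $\abs{r_0'}\le 1$ furnishes such a bound for $r_0(s)$ uniformly over all elements, whereas the factor $\cot\psi$ shows that no uniform bound can hold for $r_0(s(\theta))$ across stretched elements. A minor subtlety is the flat point $\psi=\pi/2$, where $r_0'(s)=0$ but $r_0(s)$ remains perfectly smooth, so it poses no difficulty; one only needs $\psi$ bounded away from $0$ and $\pi$, which is precisely the condition that the strict monotonicity hypothesis of Theorem \ref{thm:tensor_smooth_new} guarantees for a non-stretched element and that degrades as $\Delta$ is stretched.
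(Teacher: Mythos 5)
Your proposal is correct, and it is in fact more than the paper provides: the statement is an Observation, and the paper offers no proof at all, only the single worked example of Figure \ref{fig:ill_map}, where the curved side is the segment from $(1,0)$ to $(1,1)$ seen from $v=(0,0)$, so that $r_0(s)=\sqrt{1+s^2}$ is benign while $r_0(s(\theta))=\sec\theta$ steepens as the segment is extended. Your general computation recovers and explains exactly this picture: the identities $r_0'(s)=\cos\psi(s)$, $r_0''(s)=-\kappa(s)\sin\psi(s)+\sin^2\psi(s)/r_0(s)$ (via the classical relation $\psi'=\kappa-\theta'$), and $\frac{d}{d\theta}\,r_0(s(\theta))=r_0'(s)/\theta'(s)=r_0(s)\cot\psi(s)$ (via Theorem \ref{thm:thetap} together with the strict monotonicity of $\theta(s)$ assumed in Theorem \ref{thm:tensor_smooth_new}, which guarantees $\theta'(s)\neq 0$) are all correct, and they yield a criterion the paper never articulates: the arc-length map is uniformly $1$-Lipschitz with higher derivatives controlled by the curvature of $\gamma$ and $\dist(v,\gamma)$ alone, independently of the element's aspect ratio, while the polar map degrades exactly where the curved side becomes nearly radial, i.e.\ where $\psi$ approaches $0$ or $\pi$. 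What your approach buys is precisely this sharp localization of the failure mode; what it costs is a small gap in the prose, which you should patch: stretching per se does not always force $\psi\to 0$. For instance, a thin sliver with $v=(0,0)$ and far side from $(T,-1)$ to $(T,1)$ has $\psi\approx\pi/2$ along the whole side, and there your own formula $r_0\cot\psi$ correctly predicts that the polar parametrization remains well-conditioned (indeed $r_0(s(\theta))=T\sec\theta$ over a tiny $\theta$-range is nearly constant in relative terms). So the clean statement your argument establishes is ``the polar map is ill-conditioned if and only if the curved side is nearly radial somewhere,'' which is the stretched regime the paper actually has in mind --- the configuration of Figure \ref{fig:ill_map} prolonged, and the subtriangles of Section \ref{sec:self_int} when the target lies close to an edge --- and is a refinement, rather than a contradiction, of the Observation; you should also state explicitly that conditioning is assessed after affine rescaling of the parameter domain to a fixed reference interval, since that normalization is what makes the comparison of the two derivative bounds meaningful (your appeal to Corollary \ref{cor:anal_conv} is the right way to do this).
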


\begin{observation}
  \label{obs:smooth_jacobian_new}
When the curve $\gamma$ is a line segment, it is not hard to
show that the function $\babs{(\gamma(s)-\target)\times \gamma'(s)}$ is a
constant, and is equal to the distance from $\target$ to $\gamma$. 
Furthermore, this function is also a constant when $\gamma$ is an arc of a
circle centered at $\target$. In general, this function turns out to be
smooth except when the curve $\gamma$ is highly convex, regardless of the
aspect ratio of the curved element associated with $\gamma$.
It follows that the Jacobian term $\tilde r\cdot
\babs{(\gamma(s)-\target)\times \gamma'(s)}$ in (\ref{for:tensor_smooth_new}) is
also a smooth function under the same conditions. 
Therefore, given a smooth function over a
curved element $\tDelta$ whose curved side is well-resolved and not
excessively convex, the corresponding function in radius-arc length
coordinates is also smooth, regardless of the aspect ratio of $\tDelta$.
Such a property is particularly useful for numerical integration.
\end{observation}

\begin{remark}
To fulfill the assumption in Theorem \ref{thm:tensor_smooth_new}, it is
necessary to have the polar angle of the curved side of each curved element
be strictly monotone. In other words, any ray with the vertex opposite to
the curved side as an endpoint cannot intersect the curved side more
than once. In practice, this assumption can be easily
fulfilled by a slight refinement of the mesh near the domain boundary. We note
that the mesh refinement is easy to achieve by the Distmesh algorithm (see
Observations \ref{obs:adap_mesh}, \ref{obs:curva_fh}). 
\end{remark}

By Theorem \ref{thm:tensor_smooth_new}, (\ref{for:volpot_self}) becomes
  \begin{align}
&\iint_{\Delta}\log(\norm{\target-y})f(y)\d A_y\notag\\
&=\int_{0}^{L}\Bigl(\int_0^1 \bigl(\log(r_0(s))+\log(\tilde
r)\bigl)\cdot f\bigl(y(\tilde r,s)\bigr) \tilde r \d \tilde r \Bigr)
\babs{(\gamma(s)-\target)\times \gamma'(s)}\d s\notag\\
&=\int_{0}^{L}\bigl(I_1(s)+I_2(s)\bigr)
\babs{(\gamma(s)-\target)\times \gamma'(s)}\d s,
\label{for:volpot_self_tensor}
  \end{align}
where 
  \begin{align}
\hspace*{-2em}
I_1(s)=\int_0^1 \tilde r  \log\tilde r\cdot f\bigl(y(\tilde r,s)\bigr) \d \tilde r,\quad
I_2(s)=\int_0^1 \tilde r  \log(r_0(s)) f\bigl(y(\tilde r,s)\bigr) \d \tilde r.
  \end{align}

Clearly, the integrand of $I_2(s)$ is smooth, while the
integrand of $I_1(s)$ has an $\tilde r \log\tilde r$ singularity at $\tilde
r=0$, which has to be resolved by our quadrature scheme. Using 
generalized Gaussian quadratures (see \cite{ggq}), for any given
$N\in \N_{\geq 0}$, we can obtain a set of quadrature nodes and weights
  \begin{align}
\{\tilde r_j,\tilde w_j\}_{j=1,2,\dots,N_q}
  \end{align}
that integrates both
  \begin{align}
\bigl\{\tilde r\log\tilde r \cdot P_n(2\tilde r-1)\bigr\}_{n=0,1,\dots,N}
  \end{align}
and
  \begin{align}
\bigl\{\tilde r \cdot P_n(2\tilde r-1)\bigr\}_{n=0,1,\dots,N}
  \end{align}
over the interval $[0,1]$ to machine precision, where $P_n$ represents the
Legendre polynomial of order $n$.  In other words, this quadrature rule
integrates the space of polynomials of order $N$ over $[0,1]$ multiplied by
$\tilde r$ or $\tilde r\log\tilde r$ to machine accuracy. Therefore, the
inner integral of (\ref{for:volpot_self_tensor}) can be approximated by
  \begin{align}
I_1(s)+I_2(s)\approx \sum_{j=1}^{N_q} \tilde w_j \tilde r_j \log (r_0(s)\tilde r_j )
f\bigl(y(\tilde r_j,s)\bigr),
  \end{align}
from which it follows that  
  \begin{align}
\hspace*{-5em}\iint_{\Delta}\log(\norm{\target-y})f(y)\d A_y
\approx \sum_{j=1}^{N_q} \tilde w_j \tilde r_j \int_0^L \log (r_0(s)\tilde r_j )
f\bigl(y(\tilde r_j,s)\bigr)\babs{(\gamma(s)-\target)\times \gamma'(s)}\d s\notag\\
=\sum_{j=1}^{N_q} \tilde w_j \tilde r_j \int_0^L \bigl(\log(r_0(s))+\log(\tilde
r_j)\bigr) f\bigl(y(\tilde r_j,s)\bigr)\babs{(\gamma(s)-\target)\times \gamma'(s)}\d s.
\label{for:volpot_self_single}
  \end{align}
To compute the integrals with respect to the arc length $s$ in
(\ref{for:volpot_self_single}), we first observe that $f\bigl(y(\tilde
r_j,s)\bigr)\cdot\babs{(\gamma(s)-\target)\times \gamma'(s)}$ is a smooth
function of $s$ when $\gamma$ is smooth and not too convex (see
Observation~\ref{obs:smooth_jacobian_new}).  Thus, all that remains is to
examine the singularity of the term $\log (r_0(s))$. In the case where
$\gamma$ denotes a line segment, it is clear
that
  \begin{align}
r_0(s)=\sqrt{(s-\tilde s)^2+\norm{\target-\gamma(\tilde s)}^2}
  \end{align}
for $\tilde s:=\argmin_s \norm{\target-\gamma(s)}$, from which it follows
that the analytic continuation of $r_0(s)$ equals zero at $s=\tilde
s+i\norm{\target-\gamma(\tilde s)}$, where $i$ is the imaginary unit.
Thus, when $\gamma$ is a line segment, $\log(r_0(s))$ has a singularity at
$s=\tilde s+i\norm{\target-\gamma(\tilde s)}$. In the case where $\gamma$ is
an arbitrary curved side that is well-resolved by the mesh (which is
one of our assumptions), $\gamma$ can be seen as a smooth perturbation
of a line segment.  If we define $\tilde s$ by the formula described above
for the line segment, the singularity of $\log(r_0(s))$ is likewise
located near $s=\tilde s+i\norm{\target-\gamma(\tilde s)}$. Below, we introduce
an algorithm for resolving such a singularity.

\begin{enumerate}
\item Subdivide the integration interval into two pieces: $[0,\tilde s]$ and
$[\tilde s, L]$, where $\tilde s=\argmin_s \norm{\target-\gamma(s)}$.
\item Divide $[0,\tilde s]$ into $N+1$ subintervals
  \begin{align}
\{[(1-(1/2)^{N})\tilde s,\tilde s]\}\bigcup
\{[(1-(1/2)^{i-1})\tilde s,(1-(1/2)^{i})\tilde s]\}_{i=1,2,\dots,N},
  \end{align}
where 
  \begin{align}
N=\min \{N: (1/2)^{N}\tilde s <\norm{\target-\gamma(\tilde s)}\}.
  \end{align}
\item Similarly, divide $[\tilde s, L]$ into $M+1$ subintervals, where
  \begin{align}
M=\min \{M: (1/2)^{M}(L-\tilde s) <\norm{\target-\gamma(\tilde
s)}\}.
  \end{align}
\item Use the Gauss-Legendre quadrature rule of order $p$ over each subinterval
to compute~(\ref{for:volpot_self_single}). It is necessary for $p$ to be large 
enough, such that the two integrals
  \begin{align}
\int_0^{\tilde s/2} \bigl(\log(r_0(s))+\log(\tilde
r_j)\bigr) f\bigl(y(\tilde r_j,s)\bigr)\babs{(\gamma(s)-\target)\times \gamma'(s)}\d s
  \end{align}
and 
  \begin{align}
\int_{(L+\tilde s)/2}^{L} \bigl(\log(r_0(s))+\log(\tilde
r_j)\bigr) f\bigl(y(\tilde r_j,s)\bigr)\babs{(\gamma(s)-\target)\times \gamma'(s)}\d s
  \end{align}
can be computed to full accuracy. 
\end{enumerate}

Using the subdivision technique presented above, it is guaranteed that each
panel is separated from the singularity by at least one panel width, such
that it is in the far field of the logarithmic singularity. It follows that
an accurate quadrature result is obtained.

\begin{remark}
\label{rem:self_precomp}
The adaptive subdivision along the arc length coordinate for resolving the
nearly-singular integrand can be expensive, especially when the target is
close to the boundary. One can instead use a large number of precomputed
specialized quadrature rules to resolve the nearly-singular integrand, which
is substantially more efficient than the adaptive integration method
described above (see \cite{bremer1,bremer2}). In our implementation, for simplicity, we
compute the self-interactions using the adaptive integration method. 
\end{remark}

\begin{remark}
The need for subdivisions in the arc length coordinate is not an
artifact of the change of variables, since when the target location $\target$ is
close to the edge of a mesh element, the integrand is in nature
nearly-singular along the arc-length direction in the region between $\target$ and 
that edge.
\end{remark}

\subsection{Interpolation over a mesh element}
  \label{sec:interp}
In this section, we describe an interpolation scheme of the volume potential
(\ref{for:volpot}) over a mesh element. We note that the volume
potential is smooth provided that the boundary of the domain
$\partial\Omega$ and the density $f$ is smooth (see, for example, \cite{ying1}),
from which it follows that the Koornwinder polynomial basis is suitable
for the interpolation of the volume potential in this case.

First of all, given a mesh element $\Delta$ and an invertible and
well-conditioned mapping $\rho$ from the standard simplex $\tri$ to $\Delta$
(see Section \ref{sec:int_smooth} for the construction of $\rho$), 
the Koornwinder polynomial expansion coefficients of the function $u\circ
\rho:\tri\to \R$ can be computed by applying the interpolation matrix
to the values of $u$ at the Vioreanu-Rokhlin nodes over $\Delta$ (see
Section \ref{sec:vioreanu}), and we denote the interpolant by $I_{u\circ
\rho}$. As $u\circ \rho$ is a smooth function and $\rho$ is easily
invertible (the use of Newton's method is necessary when $\Delta$ is a
curved element), we can compute the potential to high accuracy at any point
on $\Delta$ by evaluating $(I_{u\circ \rho})\circ \rho^{-1}$.

\begin{remark}
  \label{rem:bndry_interp}
In some applications, it is useful to compute the volume potentials at the
boundary of the domain, i.e., the value of $u(v)$ for some $v=\gamma(s)$,
where $\gamma$ is the arc length parameterization of the curved side of the
curved element that $v$ lies in.  In this case, the use of Newton's method
is unnecessary, since it is easy to show that $\rho^{-1}(v)=(1-s/L,0)$, where
$\rho$ is the blending function mapping (\ref{for:blend}), and $L$ is the
total arc length of $\gamma$.
\end{remark}

\begin{observation}
As is noted in Observation \ref{obs:nonsm_bjac}, when the curved
side of a curved element is not well-resolved by the mesh, the Jacobian 
of $\rho$ could be nonsmooth. Thus, it is important for the curved side
to be well-resolved by the mesh, or, if it is not well-resolved, to use a
higher-order interpolation scheme.
\end{observation}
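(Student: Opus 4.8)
The statement couples a geometric fact about the blending map (\ref{for:blend}) with an approximation-theoretic consequence, so the plan is to first trace exactly how the Jacobian $J_\rho$ depends on the curved side $\gamma$, and then feed that dependence into the analyticity framework of Theorem \ref{thm:anal_conv} to control both $J_\rho$ and the interpolant of $u\circ\rho$ by a single Bernstein-ellipse parameter. This makes precise the informal phrase ``$J_\rho$ becomes non-smooth'' inherited from Observation \ref{obs:nonsm_bjac}: it should mean that the rate at which the Koornwinder coefficients of $J_\rho$ (and hence of $u\circ\rho$) decay degrades as the curve becomes underresolved.

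First I would differentiate (\ref{for:blend}) directly. Writing $D(\xi):=\gamma(L(1-\xi))-(1-\xi)\gamma(L)-\xi\gamma(0)$ for the deviation of the curve from its chord and $\beta(\xi):=D(\xi)/(1-\xi)$, one computes $\partial_\eta\rho=\origin-\gamma(L)-\beta(\xi)$, which depends on $\xi$ alone, and $\partial_\xi\rho=\gamma(0)-\gamma(L)-t\,\beta(\xi)+(1-t)D'(\xi)$, where $t:=\eta/(1-\xi)$ is the collapsed coordinate and $D'(\xi)=-L\gamma'(L(1-\xi))+\gamma(L)-\gamma(0)$. Thus $J_\rho=\partial_\xi\rho\times\partial_\eta\rho$ is a degree-one polynomial in $t$ whose coefficients are built solely from $\gamma$ and $\gamma'$ evaluated at the rescaled argument $L(1-\xi)$, together with $\beta$. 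The apparent pole at $\xi=1$ is removable: since $D(1)=0$, a single Taylor cancellation gives $\beta(\xi)\to-D'(1)$ as $\xi\to1$, so $\beta$, $D'$, and hence $J_\rho$ extend analytically across $\xi=1$ whenever $\gamma$ does. The upshot is that $J_\rho$ can be no better resolved, as a function of $\xi$, than $\gamma$ itself.

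The step that does the real work is to quantify ``well-resolved'' through the analyticity framework already developed. The coordinate $t$ is precisely the collapsed variable underlying the Koornwinder basis (compare the factor $2y/(1-x)-1$ in $K_{mn}$), so in the variables $(\xi,t)$ the function $J_\rho$ is polynomial in $t$ with coefficients analytic in $\xi$. I would declare the curved side well-resolved exactly when $\gamma(L(1-\xi))$, under the standard affine identification of the element's parameter interval with $[-1,1]$, continues analytically to a Bernstein ellipse $E_\varrho$ with parameter $\varrho$ bounded well above $1$; by Theorem \ref{thm:anal_conv} its Chebyshev, hence Koornwinder, coefficients then decay like $\varrho^{-n}$. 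Differentiation, the affine rescaling $\xi\mapsto L(1-\xi)$, and the rational combination defining $\beta$ (whose only candidate pole is the removable one at $\xi=1$) all preserve this ellipse, and products of functions analytic on $E_\varrho$ remain analytic there; hence $J_\rho$ inherits the geometric rate $\varrho^{-n}$. When $\gamma$ is underresolved, the nearest singularity of its continuation lies close to the real axis at the scale of the element, forcing $\varrho\to1^{+}$ and arbitrarily slow decay, which is the precise sense in which $J_\rho$ ``becomes non-smooth'' relative to a fixed expansion order.

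Finally I would close the loop to interpolation. Because $\rho$ itself is assembled from the same constituents $\gamma(L(1-\xi))$, $\gamma'(L(1-\xi))$, and $\beta$, it shares $J_\rho$'s analyticity ellipse, and so does the composition $u\circ\rho$ of the smooth potential $u$ with $\rho$; applying Theorem \ref{thm:anal_conv} (or Corollary \ref{cor:anal_conv}) to $u\circ\rho$ bounds the order-$N$ Koornwinder projection error by a multiple of $\varrho^{-N}/(\varrho-1)$. A small $\varrho$, that is, an underresolved $\gamma$, therefore forces either a larger $N$, i.e.\ a higher-order interpolation scheme, or a refinement of the mesh that enlarges $\varrho$, which are exactly the two remedies asserted. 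The main obstacle is one of formalization rather than computation: I expect the delicate points to be (i) verifying that the endpoint factors $1/(1-\xi)$ genuinely cancel against the vanishing of $D$ at $\xi=1$, so that no spurious singularity is introduced at the collapsed vertex and the $(\xi,t)$ representation is honestly analytic there, and (ii) arguing that derivatives, rational combinations, and products all preserve the governing ellipse, so that the single parameter $\varrho$ simultaneously controls $\gamma$, $J_\rho$, $\rho$, and the interpolant of $u\circ\rho$.
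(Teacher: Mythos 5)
The paper offers no proof of this statement at all: it is an empirical observation, justified only by pointing back to Observation \ref{obs:nonsm_bjac} (itself asserted without proof) and the implicit standard principle that polynomial interpolation accuracy is governed by the smoothness of the interpolated function. Your proposal is therefore a genuine rigorization rather than a parallel of anything in the paper, and its core computations are correct. Writing $D(\xi)=\gamma(L(1-\xi))-(1-\xi)\gamma(L)-\xi\gamma(0)$ and $\beta(\xi)=D(\xi)/(1-\xi)$, one indeed gets $\partial_\eta\rho=\origin-\gamma(L)-\beta(\xi)$ and, with $t=\eta/(1-\xi)$, $\partial_\xi\rho=\gamma(0)-\gamma(L)-t\,\beta(\xi)+(1-t)D'(\xi)$ (your identity follows from $\eta\beta'(\xi)=t(D'(\xi)+\beta(\xi))$, which I verified); since $D(1)=0$ (and also $D(0)=0$, which you do not need), the pole of $\beta$ at $\xi=1$ is removable exactly as you claim, so $J_\rho$ is affine in $t$ with coefficients inheriting the singularity structure of $\gamma$. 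Your Bernstein-ellipse quantification of ``well-resolved'' then buys something the paper does not have: a single parameter $\varrho$ that simultaneously controls $\gamma$, $J_\rho$, $\rho$, and the interpolant of $u\circ\rho$, making the observation's dichotomy (refine the mesh or raise $\nords$) a quantitative statement rather than a heuristic.

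Three caveats deserve flagging, beyond the two you already name. First, Theorem \ref{thm:anal_conv} is strictly one-dimensional; passing from analyticity in the collapsed coordinates $(\xi,t)$ to geometric decay of the bivariate Koornwinder coefficients $a_{mn}$ on $\tri$ is a real (if standard-feeling) extension, complicated by the fact that the collapsing map $(x,y)\mapsto(x,2y/(1-x)-1)$ is itself singular at the vertex, so analyticity in $(\xi,t)$ and in $(\xi,\eta)$ are not interchangeable there. Second, the paper guarantees only that $u$ is \emph{smooth} when $\partial\Omega$ and $f$ are smooth (Section \ref{sec:green}), not analytic; your final step applying Theorem \ref{thm:anal_conv} to $u\circ\rho$ tacitly requires analyticity of $u$ on a neighborhood of $\rho(E_\varrho)$, which for boundary elements is delicate because $\nabla^2 u$ jumps across $\partial\Omega$ --- a fully honest version either assumes analytic $f$ and $\partial\Omega$ or substitutes a finite-smoothness (Jackson-type) rate. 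Third, your assertion that $J_\rho$ ``can be no better resolved than $\gamma$ itself'' is the lower-bound direction, and your argument establishes only the upper bound (that $J_\rho$ inherits $\gamma$'s ellipse); the converse is generically true because $\partial_\eta\rho$ contains $\beta$ additively, so $\gamma$'s nearby singularities appear in $J_\rho$ without cancellation, but this should be stated as generic rather than universal. None of these affects the qualitative conclusion, which matches the paper's intent.
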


\subsection{Coupling quadratures to the FMM}
  \label{sec:fmm}

With the numerical apparatus developed in the previous sections, given 
geometry with a triangle mesh, and a source function $f:\R^2\to \R$, we can
evaluate the integral (\ref{for:volpot}) at any given target location in
$\O(N)$ operations, where $N$ is the total number of the quadrature nodes,
by summing up all of the potentials generated over the mesh elements at the
target location, using the algorithms introduced in Sections \ref{sec:far_int},
\ref{sec:near_int}, \ref{sec:self_int}.  It is not hard to see that the cost
is dominated by the far field interactions.

It is often desirable to evaluate the potential at multiple
targets, which, if computed naively, leads to $\O(MN)$ total operations,
where $M$ is the number of target locations. Such an expensive cost is
usually prohibitive in practice.  In this section, we reduce the time
complexity to $\O(M+N)$ by coupling the FMM to our volume potential evaluation
algorithm.

The key observation of the algorithm is that, if we treat all interactions
as far field interactions, the volume potential evaluation is equivalent to
a standard electrostatic interaction problem, where the charge locations are
the quadrature nodes over the triangles, and the charge densities are given
by the density function values at the nodes multiplied by the
corresponding quadrature weights. Therefore, with the well-known fast
multipole method \cite{fmm}, such computations can be done with linear time
complexity.  However, since treating all quadrature nodes as point sources
does not account for the near and self-interactions, the computed potential
is inaccurate. Therefore, it is necessary to make a correction to the
computed potential. Below, we outline the so-called ``subtract-and-add''
method (see \cite{leslie}) for the potential corrections.

Suppose that $x$ is a target location, and $u(x)$ is the potential at $x$
computed using the FMM. Let $\st(x)$ denote the element containing $x$,
$\nt(x)$ denote the near field of $x$, as defined in Definition
\ref{def:nearby_mesh_type}.
\begin{enumerate}
\item Find the mesh element $\st(x)$ that $x$ lies within, and
the set of nearby elements $\nt(x)$ using the query algorithm
described in Appendix \ref{sec:query}. 
\item Subtract from $u(x)$ the contribution made in the FMM calculation from
the quadrature nodes over $\nt(x)$ and $\st(x)$. Then, add to $u(x)$ the
near and self-interaction potential over $\nt(x)$ and
$\st(x)$, respectively, using the algorithms described in
Sections \ref{sec:near_int} and \ref{sec:self_int}.
\end{enumerate}

Since the number of mesh elements in the near field of a given target is
$\O(1)$, we have that the number of operations required to correct the
potential at each target is $\O(1)$. Therefore, the total time complexity of
the algorithm is still $\O(M+N)$.

\begin{remark}
Clearly, the subtraction of spurious contributions from near fields can be
avoided by disabling the computation of neighbor interactions in the
FMM. However, it is observed in \cite{leslie} that the use of the
``subtract-and-add'' method described in this Section leads to faster
implementation, largely because of a better cache utilization.
We note that catastrophic cancellation could happen during the
``subtract-and-add'' stage. In our case, the singularity of the logarithmic
kernel is so mild that this is generally not a concern. When a more singular
kernel is in use, one should omit the computation of neighbor interactions
in the FMM, and handle these interactions using the algorithms of Sections
\ref{sec:near_int}, \ref{sec:self_int}.
\end{remark}

\section{Accelerating potential evaluation over unstructured meshes}
  \label{sec:acceleration}

The algorithm described thus far, while differing in some details, involves
fairly standard ideas that have been developed for the evaluation of the
surface potential \cite{bremer1,leslie}.  In this section, we describe three
general observations that lead to accelerated volume potential calculations
over unstructured meshes.
The first concerns the geometry of the near field. We observe that the near
field is typically approximated by a ball or a triangle (see, for example,
\cite{bremer1,leslie,anderson}), while in reality, the geometry of the near
field can be characterized precisely by the union of several Bernstein
ellipses. Moreover, these ellipses are very flat when the order of the far
field quadrature rule is high, or when the error tolerance is large, from
which it follows that the conventional approximations of the near field
become inaccurate. 
When the near field is computed accurately, the cost of the most expensive
component in the volume potential evaluation algorithm, namely, the near
interaction potential correction, is reduced substantially. The second
observation is that, knowing the near field geometry, one can
efficiently offload the near field interaction corrections onto the
FMM-based far field interaction computation by increasing the order of the
far field quadrature rule. Such a trade leverages the computational efficiency of
highly optimized parallel FMM libraries, and reduces the cost of the much
more expensive and unstructured near field interaction computation. The
third observation is that the most commonly used arrangement, in which the
interpolation nodes are placed on the same mesh over which the potential is
integrated over, leads to an artificially large potential correction cost.
If the quadrature mesh and the interpolation mesh are instead staggered to
one another, the number of interpolation nodes at which the near field and
self-interactions are expensive to evaluate is reduced dramatically.

We present these techniques in the context of completely unstructured
meshes, and note that the ideas are equally applicable in the context of
hybrid or structured meshes.

\subsection{Precise near field geometry analysis}
  \label{sec:nf}

In this section, we present the precise characterization of the geometry of 
the near field, the use of which eliminates the unnecessary near field
interaction computations, and provides an error control functionality to the
far and near field interaction computations.

By Definition \ref{def:far_near_field}, given a triangle integration domain
$\bDelta$, a set of possible densities $S$, a far field quadrature rule
$\{y_i,w_i\}_{i=1,2,\dots,M}$ over $\bDelta$ of order $N$ with length $M$,
and an error tolerance $\varepsilon$, the far field of $\bDelta$ is the set
of points
  \begin{align}
\hspace*{-5.5em}\ft_{\bDelta}=\{x\notin \bDelta: \bbabs{\sum_{i=1}^M
w_i\log\norm{x-y_i}f(y_i) - \iint_{\bDelta} \log\norm{x-y}f(y)\d A_y} <
\varepsilon, f\in S\},
  \label{for:precise_far}
  \end{align}
and the near field of $\bDelta$, which we denote by $\nt_{\bDelta}$, equals
the complement of $\ft_{\bDelta}$ in the set $\R^2\setminus \bDelta$.

For simplicity, we assume that $\bDelta$ is the standard simplex $\tri$.
Given an arbitrary point $x_0=(a,b)\in \ft_{\tri}$, we rewrite the integrand
as a Koornwinder polynomial expansion
  \begin{align}
R(y):=\log\norm{x_0-y}f(y)=\sum_{n=0}^\infty\sum_{m=0}^n a_{mn}K_{mn}(y),
  \label{for:defR}
  \end{align}
where $a_{mn}$ is the inner product between $f$ and $K_{mn}$ over $\tri$.
Furthermore, by definition (\ref{for:precise_far}), we have that
  \begin{align}
&\bbabs{\sum_{i=1}^M w_i R(y_i) - \iint_{\tri}
 R(y)\d A_y}<\varepsilon.
  \label{for:precise_far_spec}
  \end{align}
By assumption, the far field quadrature rule $\{y_i,w_i\}_{i=1,2,\dots,M}$ integrates
Koornwinder polynomials of order up to $N$ exactly, from which it follows that
  \begin{align}
\sum_{i=1}^M w_i\sum_{n=0}^N\sum_{m=0}^{N-n} a_{mn}K_{mn}(y_i) = \iint_{\tri}
\sum_{n=0}^N\sum_{m=0}^{N-n} a_{mn}K_{mn}(y)\d A_y. 
  \label{for:precise_far_simp}
  \end{align}
Thus, by combining (\ref{for:precise_far_simp}) and the orthogonality of
the Koornwinder polynomials (see (\ref{for:koorn_ortho})), the inequality
(\ref{for:precise_far_spec}) becomes
  \begin{align}
\bbabs{\sum_{i=1}^M w_i\cdot R_N(y_i)}<\varepsilon,
  \label{for:residue_eps}
  \end{align}
where
  \begin{align}
R_N(y):=&\, \sum_{n=0}^\infty\sum_{m=0}^n a_{mn}K_{mn}(y)-
\sum_{n=0}^N\sum_{m=0}^{N-n}
a_{mn}K_{mn}(y)\notag\\
=&\, \sum_{n=\ceil{(N+1)/2}}^\infty\sum_{m=N-n+1}^{n}
a_{mn}K_{mn}(y)
  \label{for:remainder}
  \end{align}
denotes the remainder, i.e., the sum of the expansion terms with order
larger than $N$. 
Since the remainder $R_N(y)$ is a general function, it is generally true that
(\ref{for:residue_eps}) holds if and only if 
  \begin{align}
\babs{w_i \cdot R_N(y_i)}<\varepsilon,
  \label{for:single_residue_eps}
  \end{align}
for all $i$. Specifically, the inequality
(\ref{for:single_residue_eps}) holds only if
  \begin{align}
\babs{R_N(y_j)}<\frac{\varepsilon}{w},
  \label{for:single_residue_eps_2}
  \end{align}
where $\{y_j\}$ denotes the $\sim n/2$ nodes that are closest to $[0,1]\times
\{0\}$, and $w$ denotes the largest quadrature weight corresponding to
these nodes (we note that there are usually around $n/2$ Xiao-Gimbutas nodes
along $[0,1]\times \{0\}$). By continuity of the remainder $R_N$,
(\ref{for:single_residue_eps_2}) implies that 
  \begin{align}
\bnorm{R_N|_{[0,1]\times \{0\}}}_{L^\infty}<\varepsilon\cdot C_N /w,
  \label{for:trace_eps}
  \end{align}
where $R_N|_{\partial \tri}$ is the trace of the remainder $R_N$ 
on the edges of the standard simplex $\tri$, and $C_N$ is a constant
that converges down to $1$ as $N$ increases, by continuity of $R_N$.
We postpone the discussion on the actual value of $C_N$ to
Observation \ref{obs:empirical_rho}.

Without loss of generality, we only consider the trace of $R_N$
on the edge connecting the two points $(0,0)$ and $(1,0)$, i.e.,
$R_N\big|_{[0,1]\times \{0\}}$. Furthermore, for ease of presentation, 
we map its domain to $[-1,1]$ by defining $H_N:[-1,1]\to\R$, where
  \begin{align}
H_N(x):=R_N\big|_{[0,1]\times\{0\}}\Bigl(\frac{x+1}{2}\Bigr).
  \end{align}
Similarly, we can define the trace of the integrand $R$ (see
(\ref{for:defR})) on the same edge (after the same
linear transformation of the domain) by $H:[-1,1]\to\R$, where
  \begin{align}
H(x):=R\big|_{[0,1]\times
\{0\}}\Bigl(\frac{x+1}{2}\Bigr)=\log\Babs{a+ib-\frac{x+1}{2}}\cdot 
f\Bigl(\frac{x+1}{2}\Bigr),
  \end{align}
and $i$ is the imaginary unit (recall that $a,b$ denote the
$x,y$-coordinates, respectively, of the given point $x_0$).
Clearly, $H-H_N$ is the projection of $H$ onto the space of one-dimensional
polynomials of order $N$.

With $H$ and $H_N$ thus defined, (\ref{for:trace_eps}) can be rewritten as 
  \begin{align}
\bnorm{H_N}_{L^\infty[-1,1]}<\varepsilon\cdot C_N/w.
  \label{for:Hn_eps}
  \end{align}
We now define a Bernstein ellipse $E_{\rho_0}$ with parameter
  \begin{align}
\rho_0=\bigl(\frac{w}{\varepsilon\cdot C_N }\bigr)^{1/N}.
  \label{for:rho0}
  \end{align}
It is clear that $E_{\rho_0}$ is well-defined for sufficiently small
$\varepsilon$ (so that $\rho_0>1$).
Then, we have that $H-H_N$ is a one-dimensional polynomial of
order $N$ approximating $H$, satisfying
  \begin{align}
\norm{H-(H-H_N)}_{L^\infty[-1,1]}=\norm{H_N}_{L^\infty[-1,1]}<\varepsilon\cdot 
 C_N/w=\rho_0^{-N},
  \end{align}
by (\ref{for:Hn_eps}) and (\ref{for:rho0}). It follows that, by Theorem
\ref{thm:bern},
  \begin{align}
H(z)=\log\Babs{a+ib-\frac{z+1}{2}}\cdot f\Bigl(\frac{z+1}{2}\Bigr)
  \end{align}
can be analytically continued to an analytic function in the open
Bernstein ellipse $E_{\rho_0}$. Therefore, we have that the complex 
number $2(a+ib)-1$ must be outside of $E_{\rho_0}$. Equivalently, in the
Cartesian plane, the point $x_0$ has to be outside of
  \begin{align}
E_{\rho_0}^{(1)}:=\Bigl\{\Bigl(\frac{x+1}{2},\frac{y}{2}\Bigr):x+iy\in
E_{\rho_0}\Bigr\}.
  \label{for:Erho0}
  \end{align}
Similarly, one can apply the same argument to the remaining two edges of $\tri$,
and get two more ellipses, which we denote by $E_{\rho_0}^{(2)}$ and
$E_{\rho_0}^{(3)}$. It follows that it is a necessary condition that
the near field $\nt_{\tri}$ contains the union of the three
ellipses in $\R^2\setminus \tri$, i.e., 
  \begin{align}
E_{\tri}\subset \nt_{\tri},
  \label{for:far_nece}
  \end{align}
where
  \begin{align}
E_{\tri}:=\bigl(\cup_{i=1}^{3}E_{\rho_0}^{(i)}\bigr)\setminus\tri.
\label{for:threeE}
  \end{align}
In other words, $E_{\tri}$ is a lower bound of the near field $\nt_{\tri}$.
One could, in fact, apply the same argument to get an ellipse for every line
segment inside $\tri$, however, it is clear that their union in
$\R^2\setminus\tri$ equals $E_{\tri}$. Thus, $E_{\tri}$ is the optimal
lower bound that we can obtain using our argument.
Furthermore, it turns out that, with some additional mild assumptions, one
can show that the lower bound $E_{\tri}$ is tight, and we sketch the proof
below.

Firstly, we define a union of three Bernstein ellipses that are slightly
larger than $E_{\tri}$:
  \begin{align}
E_{\tri}':=\bigl(\cup_{i=1}^{3}E_{C_N^{1/N}\cdot \rho_0}^{(i)}\bigr)\setminus\tri.
\label{for:threeEprime}
  \end{align}
We note that the factor $C_N^{1/N}$ converges down to $1$ as $N$ increases,
and that empirically, $C_N$ is small when $N$ is small (see Table \ref{tab:CN_values}).

Given $x_0\in\R^2\setminus (\tri\cup E_{\tri}')$, suppose that the
restriction of the integrand $R$ to any line segment inside $\tri$ can be
analytically continued inside $\tri\cup E_{\tri}'$.  Suppose further that the
analytic continuation of each such restriction is bounded inside $\tri\cup
E_{\tri}'$ by $1/4$. Also note that in practice, the parameter $\rho_0$ is
generally larger than $1.5$ (see formula (\ref{for:rho0}), Observation
\ref{obs:empirical_rho}).  We want to show that $x_0\in\ft_{\tri}$.

Recall that $x_0\in\ft_{\tri}$ if 
  \begin{align}
\babs{R_N(y_i)}<\frac{\varepsilon}{w_i},
  \label{for:res_eps_3}
  \end{align}
for all $i$ (see formulas (\ref{for:remainder}),
(\ref{for:single_residue_eps})). We show that (\ref{for:res_eps_3}) holds
for all the quadrature nodes $\{y_i\}$ by considering two cases, one 
for the $\sim 3\cdot n/2$ nodes that are closest to $\partial \tri$, and the
other for the rest of the nodes, which are away from $\partial \tri$.

\paragraph{Proof for $\{y_i\}$ that are close to $\partial\tri$:}
By Corollary \ref{cor:anal_conv}, we have that
  \begin{align}
\hspace*{-5.5em}\norm{R_N}_{L^\infty([0,1]\times\{0\})}=\norm{H_N}_{L^\infty[-1,1]}\approx
\norm{H-f_N}_{L^\infty[-1,1]} \leq (C_N^{1/N}\cdot \rho_0)^{-N}=\varepsilon /w,
  \end{align}
where $f_N$ denotes the Chebyshev projection of
$H$ of order $N$. By a similar argument, one can show that
  \begin{align}
\norm{R_N}_{L^\infty([0,1]\times\{y\})}\leq \varepsilon /w\leq\varepsilon/w_i,
  \end{align}
for all $y$ close to zero, and for all
quadrature weights $w_i$ corresponding to the nodes $y_i$ that are close to
$[0,1]\times\{0\}$.
Therefore, we have that (\ref{for:res_eps_3}) holds for all the $\sim n/2$
nodes $y_i$ that are closest to the line segment connecting $(0,0)$ and
$(1,0)$. Similarly, one can show that (\ref{for:res_eps_3}) holds for all
the $\sim 3\cdot n/2$ nodes that are closest to the three sides of $\tri$.

\paragraph{Proof for $\{y_i\}$ that are away from $\partial\tri$:}
We observe that, in practice (i.e., when the order $N$ of the quadrature
rule is no larger than $50$), all the quadrature weights $\{w_i\}$ are
bounded by $10\cdot w$.
Therefore, to prove (\ref{for:res_eps_3}), it is sufficient to show that 
  \begin{align}
\babs{R_N(y)}<\frac{\varepsilon}{10\cdot w},
  \label{for:res_eps_4}
  \end{align}
for $y\in \tri$ that are away from $\partial \tri$. Numerically, given an
arbitrary line segment $L$ inside $\tri$, provided that the mapping from
$\{K_{mn}|_L\}_{0\leq m+n\leq N}$ to $\{T_n\}_{n=1,2,\dots,N}$ is stable
(where $T_n$ denotes the Chebyshev polynomial of order $n$),   
  \begin{align}
\norm{R_N|_L}<\frac{\varepsilon}{10\cdot w}
  \label{for:res_eps_5}
  \end{align}
holds if and only if
  \begin{align}
\norm{R|_L-f_n}<\frac{\varepsilon}{10\cdot w}
  \label{for:res_eps_6}
  \end{align}
holds, where $f_n$ is the $n$th order Chebyshev projection of $R|_L$.
Clearly, for each $y\in\tri$, one can pick a
sufficiently long line segment $L$ that contains $y$, such
that the mapping is stable. Then, by Corollary \ref{cor:anal_conv}, the
inequality (\ref{for:res_eps_6}) (equivalently, (\ref{for:res_eps_5})) holds
if $x_0$ is outside a Bernstein ellipse transformed so that the interval
$[-1,1]$ corresponds to $L$, with parameter
$(10\cdot w/\varepsilon)^{1/N}$. It is easy to see that such an ellipse is
inside $E_{\tri}\cup \tri$. Thus, by definition of $x_0$, the inequality
(\ref{for:res_eps_4}) holds for all $y\in \tri$ that are away from
$\partial \tri$.

\paragraph{}
Therefore, (\ref{for:res_eps_3}) holds for all $i$, from which it
follows that $x_0\in\ft_{\tri}$, and $E_{\tri}$ is a tight lower bound of
the near field when $C_N^{1/N}$ is close to $1$.

\begin{observation}
Empirically, we find that, when $f\in S$ is sufficiently smooth over the
integration domain (a triangle), and when $C_N$ is chosen according to Table
\ref{tab:CN_values}, the set $E_{\tri}$ precisely characterizes the near
field of the standard simplex $\tri$ (see Figures \ref{fig:nf1},
\ref{fig:nf0}, \ref{fig:nf2}).
  \label{obs:empirical_rho}
\end{observation}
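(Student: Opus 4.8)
The plan is to treat this observation as an empirical validation of the two-sided analysis already carried out above, where we have established that $E_{\tri}\subset\nt_{\tri}$ is a lower bound (via the necessary condition derived from Theorem \ref{thm:bern}) and that, under the stated analyticity and boundedness assumptions, $E_{\tri}'$ is an upper bound for the near field (via Corollary \ref{cor:anal_conv}). Since $E_{\tri}$ and $E_{\tri}'$ differ only by the factor $C_N^{1/N}$ in the Bernstein parameter, and $C_N^{1/N}\to 1$ as $N\to\infty$, the two boundaries pinch together, so the remaining task is to exhibit a choice of $C_N$ for which the computed region $E_{\tri}$ coincides with the numerically measured near field.

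First I would fix a representative smooth density $f\in S$, an error tolerance $\varepsilon$, and a far field rule of order $N$ with weights $\{w_i\}$. I would then determine the true near field $\nt_{\tri}$ directly from Definition \ref{def:far_near_field}: sample a fine grid of targets $x\in\R^2\setminus\tri$, and for each $x$ compute the quadrature error $\babs{\sum_i w_i\log\norm{x-y_i}f(y_i)-\iint_{\tri}\log\norm{x-y}f(y)\d A_y}$ against a highly refined reference value, classifying $x$ as near field when the error exceeds $\varepsilon$. This produces the ground-truth boundary $\partial\nt_{\tri}$.

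Next I would construct the predicted region $E_{\tri}$ from formulas (\ref{for:rho0})--(\ref{for:threeE}), using the largest weight $w$ among the $\sim n/2$ edge-adjacent nodes together with the tabulated value of $C_N$, and overlay the predicted boundary on the measured one (as in Figures \ref{fig:nf1}, \ref{fig:nf0}, \ref{fig:nf2}). The calibration of $C_N$ is the crux here: the lower-bound derivation passes through two inequalities that are not tight, namely the reduction from the weighted-sum condition (\ref{for:residue_eps}) to the single-node condition (\ref{for:single_residue_eps}), and the continuity step from the pointwise bound (\ref{for:single_residue_eps_2}) to the $L^\infty$ trace bound (\ref{for:trace_eps}). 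The constant $C_N$ absorbs the slack in these two steps, and I would fit it (as in Table \ref{tab:CN_values}) by requiring that $E_{\tri}$ match the measured near field across a range of $\varepsilon$ and across several test densities, verifying that a single $N$-dependent value suffices.

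The main obstacle is precisely this calibration. Because the argument hinges on the remainder $R_N$ behaving like a genuine degree-$N$ polynomial approximation error on each edge trace, and because the equivalence between $\norm{R_N|_L}$ and the Chebyshev projection error $\norm{R|_L-f_n}$ requires the map from $\{K_{mn}|_L\}$ to $\{T_n\}$ to be well-conditioned, one must confirm empirically that $C_N$ is insensitive to the particular density and tolerance chosen, i.e.\ that the geometry of the near field is governed by the analytic-continuation radius of the logarithmic kernel (the position of the point $2(a+ib)-1$ relative to the Bernstein ellipse) rather than by $f$ itself. Demonstrating this robustness, and checking that the approximation $E_{\tri}\approx E_{\tri}'$ is already accurate at the modest values of $N$ used in practice (where $C_N^{1/N}$ is not yet negligibly close to $1$), is what upgrades the sandwich $E_{\tri}\subset\nt_{\tri}\subset E_{\tri}'$ into the claimed precise characterization.
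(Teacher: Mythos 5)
Your proposal matches the paper's treatment: the observation is supported exactly by the sandwich $E_{\tri}\subset\nt_{\tri}\subset E_{\tri}'$ developed in Section \ref{sec:nf}, combined with an empirical calibration of $C_N$ (Table \ref{tab:CN_values}) against brute-force contour plots of the true quadrature error overlaid with the predicted ellipses (Figures \ref{fig:nf1}, \ref{fig:nf0}, \ref{fig:nf2}). You also correctly identify where the slack absorbed by $C_N$ arises and the density-robustness issue that the paper checks in Figure \ref{fig:nf2}, so there is nothing substantive to add.
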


\begin{table}[h!!]
    \begin{center}
    \begin{tabular}{c|cccccc}
    $N$ & 50 & 40 & 30& 25&20 &12 \\
    \hline
      $C_N$ & 1 & 2 & 2.5 & 3 & 6.8 & 12 
    \end{tabular}
    \caption{
    {\bf The values of $C_N$ that allow $E_{\tri}$ to 
    characterize the near field of $\tri$ precisely.} We note that this table is
    obtained empirically, and the values work for arbitrary triangles.
    Furthermore, the table agrees with the claim that $C_N$ converges down
    to $1$ as $N$ increases. 
    }
    \label{tab:CN_values}
    \end{center}
\end{table}

To generalize this argument to arbitrary triangles, one
only needs to adjust the constant $w$ (see (\ref{for:single_residue_eps_2}))
based on the magnitude of the quadrature weights (equivalently, the size of
the triangle). This, however, does not add difficulty to the implementation,
since one only needs to compute $w$ for the standard simplex once, and then
scale it according to the ratio between the area of the triangle and
the area of the standard simplex. We note that the naive estimation
of the near field neglects this nonlinear relation between the size of the
near field and the size of the mesh element, which causes unnecessary near field
interaction computations, especially when the mesh element is small.
Apart from this, it is important to note
that, given a stretched triangle, $E_{\tri}$ may not be the optimal lower
bound that we can obtain using our argument. For example, as is shown in the
left part of Figure \ref{fig:nf1_d}, the near field corresponding to the
error tolerance $10^{-14}$ is not perfectly captured by $E_{\tri}$. Such an
issue can be fixed by adding the ellipse, obtained from applying our
argument to the line segment connecting $(0,0.5)$ and $(4,0.5)$, to the
union of ellipses. In practice, stretched triangles rarely appear if a
decent meshing algorithm is used, and the presence of a few stretched
triangles barely affects the overall accuracy of the evaluation.

In the situation where the domain is a curved element,
if the mapping from the reference domain (i.e., a standard simplex) to the
curved element is also valid and sufficiently smooth for points near the
simplex, one can apply our argument to the reference domain, and compute the
inverse mapping of any given point outside the curved element (by Newton's
method) to check whether the point is inside the near field or not;
Alternatively, one could linearize the curved boundary of the curved element with
a polyline, and apply our argument to every linearized boundary segment
(adjusting $\varepsilon$ to account for the numerically smaller polynomial
orders on the traces). We note that, in practice, given a curved element,
very few discretization nodes that are close to the curved side belong to
the near field of the curved element, and thus, it is often convenient to to
treat the curved side as a straight line segment when one applies the near
field geometry analysis.

Besides allowing for the precise identification of all of the necessary
near interaction potential corrections, we note that our near field
geometry analysis is also helpful in the near field interaction 
computation itself. Recall that, in Section \ref{sec:near_int}, we describe
an adaptive algorithm for resolving the nearly-singular integrand, which
recursively subdivides the integration domain, such that the integrand is
smooth on each subdomain. When our near field geometry
analysis is applied to the subdomains, the algorithm is able to decide the 
number of required subdivisions precisely, avoiding the possibility both of
oversampling and undersampling.

\begin{observation}
The shape of a near field is similar to a circle, when both the error
tolerance and the order of the quadrature rule are low (see Figures
\ref{fig:nf1_a} and \ref{fig:nf2_a}). This, however, does not imply that our
estimation is useless in such a setting. 
Firstly, without rigorous analysis, one often needs to overestimate the size
of the near field to improve the robustness of the algorithm. Secondly,
in the adaptive subdivision-based near field interaction computation,
as in the case of arbitrary triangles, one has to scale the
size of $w$ as the area of the triangle becomes smaller during the subdivisions,
which is equivalent to increasing the error tolerance $\varepsilon$ by
(\ref{for:rho0}). One can observe from Figure \ref{fig:nf2_a} that the naive
estimation of the near field of the sub-triangle becomes inefficient, and
leads to many unnecessary subdivisions.
\end{observation}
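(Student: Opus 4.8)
The plan is to make precise the sense in which the near field $E_{\tri}$ degenerates to a disk as the controlling parameter $\rho_0$ grows, and then to verify that the stated hypotheses (small error tolerance and low quadrature order) are exactly the regime in which $\rho_0$ is large. First I would read off from (\ref{for:rho0}) that $\rho_0=(w/(\varepsilon\cdot C_N))^{1/N}$ is large precisely when $\varepsilon$ is small and $N$ is small: a small $\varepsilon$ inflates the base $w/(\varepsilon\cdot C_N)$, while a small $N$ inflates the exponent $1/N$. The only subtlety here is that $C_N$ itself grows as $N$ decreases (see Table \ref{tab:CN_values}), so I would restrict attention to the practical range $N\leq 50$ and note that, since $\log(w/\varepsilon)$ dominates $\log C_N$ when $\varepsilon$ is small, the quantity $\log\rho_0\approx\log(w/\varepsilon)/N$ is indeed increasing as $N$ decreases.

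The geometric heart of the argument is that a single Bernstein ellipse becomes circular as its parameter grows. By definition, $E_{\rho_0}$ has horizontal semi-axis $(\rho_0+\rho_0^{-1})/2$ and vertical semi-axis $(\rho_0-\rho_0^{-1})/2$, so its axis ratio is $(\rho_0^2-1)/(\rho_0^2+1)=1-O(\rho_0^{-2})$, its eccentricity $2\rho_0/(\rho_0^2+1)$ tends to $0$, and, after rescaling by $\rho_0/2$, $E_{\rho_0}$ converges to the unit circle. I would then pass to the union (\ref{for:threeE}): each rescaled ellipse $E_{\rho_0}^{(i)}$ (see (\ref{for:Erho0})) is, to leading order in $\rho_0$, a disk of radius proportional to $\rho_0$ (the constant being $1/4$ for the two unit legs and $\sqrt 2/4$ for the hypotenuse) centered at the midpoint of edge $i$. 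Since the three midpoints lie inside $\tri$ and are therefore only $O(1)$ apart, their separation is negligible compared with the radii $O(\rho_0)$; hence the union of the three disks is, up to a relative error $O(1/\rho_0)$, a single disk. Removing $\tri$ in (\ref{for:threeE}) alters the set only inside an $O(1)$ neighborhood of the origin, which is again negligible at scale $\rho_0$. I would formalize ``similar to a circle'' as the statement that the Hausdorff distance between $\rho_0^{-1}E_{\tri}$ and a fixed disk tends to $0$ as $\rho_0\to\infty$.

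I expect the main obstacle to be that the claim is intrinsically approximate: the union of three ellipses of unequal sizes and different orientations is never exactly a circle, so the entire content lies in quantifying the deviation and choosing the right notion of closeness. Concretely, the hardest step is bounding the outer boundary of the union of the three nearly-equal disks and showing that the bumps produced by the slightly larger hypotenuse ellipse and by the $O(1)$ offsets of the centers shrink, relative to the radius, like $O(1/\rho_0)$; this is exactly what lets the rescaled near field converge to a genuine disk. A secondary, more bookkeeping-level difficulty is controlling the non-monotone dependence of $\rho_0$ on $N$ through $C_N$, which I would handle by working entirely within the tabulated regime where $\varepsilon$ is small enough that the $\log(w/\varepsilon)/N$ term dominates.
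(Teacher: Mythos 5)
The paper offers no formal proof of this observation: the near-circular shape of the near field at low order and small tolerance is supported empirically by the contour plots (Figure \ref{fig:nf0} in particular), and the rest of the observation is a qualitative argument built on formula (\ref{for:rho0}) and Figure \ref{fig:nf2_a}. Your proposal supplies an asymptotic justification of the first sentence that the paper does not attempt: since $\rho_0=(w/(\varepsilon C_N))^{1/N}$ grows as $\varepsilon$ and $N$ decrease (with the $C_N$ caveat you correctly handle via Table \ref{tab:CN_values}), each ellipse $E_{\rho_0}^{(i)}$ of (\ref{for:Erho0}) has axis ratio $1-O(\rho_0^{-2})$ and radius $O(\rho_0)$ while the three centers are only $O(1)$ apart, so the rescaled union $\rho_0^{-1}E_{\tri}$ converges in Hausdorff distance to a disk. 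This is correct and is a more rigorous route than the paper's. One quibble: the hypotenuse ellipse is not ``slightly'' larger than the leg ellipses — its semi-axes exceed theirs by the fixed factor $\sqrt{2}$, so for large $\rho_0$ the two leg ellipses are eventually \emph{contained} in the hypotenuse ellipse and the limiting disk is the one of rescaled radius $\sqrt{2}/4$ centered at the hypotenuse midpoint; the conclusion survives, but the deviation is not a relative $O(1/\rho_0)$ perturbation among three comparable disks. Also, in the paper's operating range ($12\leq N\leq 50$, $\varepsilon\geq 10^{-14}$) one has roughly $1.5\lesssim\rho_0\lesssim 3.5$, so the limit $\rho_0\to\infty$ formalizes a trend rather than a regime that is literally reached.

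The more substantive gap is that your proof addresses only the first sentence of the observation. Its actual point — that the precise estimate remains useful even when the near field is ball-like — is left untouched: you do not address (i) that a naive ball must be padded for robustness in the absence of analysis, nor (ii) that under adaptive subdivision $w$ scales with the sub-triangle area, which by (\ref{for:rho0}) drives $\rho_0$ down toward $1$, flattens the ellipses, and makes the true near field of a sub-triangle far smaller than any ball, so the naive estimate forces unnecessary subdivisions. Point (ii) follows directly from the same formula you start from and would complete the argument; as written, the proposal establishes the premise of the observation but not its conclusion.
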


\begin{remark}
By formulas (\ref{for:rho0}) and (\ref{for:threeE}), the volume of the near
field goes to zero as the order of the far field quadrature rule goes to
infinity. The near field estimated by a ball, on the other hand,
always results in a non-negligible volume. Moreover, we note that,
the more distorted a mesh element is, the poorer the naive estimation
of its near field becomes.
  \label{rem:near_zero}
\end{remark}

\begin{remark}
The shape information of the ellipses can be efficiently
precomputed for all the mesh elements. Thus, the cost of checking whether a
target is inside the approximated near field or not using ellipses is
negligible.
\end{remark}

We report the true near field and our estimated near field, for different
triangles with different densities and different quadrature orders,
in Section \ref{sec:experiment_bern}. We also report the performance of the
volume potential evaluation algorithm, with and without precise near field
geometry analysis, in the same section.

\subsection{Offloading the near field interaction computation onto the
FMM-based far field interaction computation}
\label{sec:offload}

Since the volume of the near field goes to zero as the order of the far
field quadrature rule increases (see Remark \ref{rem:near_zero}), the number
of near field interaction potential corrections can be reduced arbitrarily,
in exchange for a more expensive far field interaction computation.
It has been long realized that one needs to adjust the order of the far 
field quadrature rule, such that the costs of the far field interactions
and the near field interactions are balanced (see, for example,
\cite{anderson,leslie}). However, this idea is presented as a heuristic in
the literature, and the adjustments are done empirically. In fact,
without characterizing the near field geometry precisely,
such an idea cannot be efficiently carried out. This is because, when the
order of the far field quadrature rule is high, the standard approximation
of the near field by a ball or a triangle becomes inaccurate (see Section
\ref{sec:nf} and Figure \ref{fig:nf1}). It follows that the high-order far
field quadrature rule is underutilized; moreover, one has to
overestimate the size of the near field to improve the robustness of the
algorithm, in the absence of the precise near field geometry analysis.
Therefore, the use of the precise near field geometry analysis is critical
for efficiently offloading the near field interaction computation onto the
FMM-based far field interaction computation.

To quantify the trade-off, we analyze the rate at
which the Bernstein ellipse $E_{\rho_0}$ shrinks. 
It is easy to show that the area of a Bernstein ellipse with
parameter $\rho$ is asymptotically proportional to 
  \begin{align}
\log\rho_0=\frac{1}{N}\log\bigl(\frac{w}{\varepsilon\cdot C_N}\bigr),
  \end{align}
(see, for example, \cite{garritano}), from which it follows that the cost of
the near field interaction potential corrections is proportional to
  \begin{align}
\frac{3}{N}\log\bigl(\frac{w}{\varepsilon\cdot C_N}\bigr).
  \end{align}
We also have that the cost of the FMM-based far field interaction
computation is proportional to
  \begin{align}
N_{tgt}+N_{src}=N_{tgt}+\O(N^2),
  \end{align}
where $N_{tgt},N_{src}$ denote the number of the target locations and the number
of quadrature nodes in total, respectively. Despite that the FMM cost
can potentially increase at a faster rate than the near field interaction
potential correction cost decreases, such a trade leverages the computational
efficiency of highly optimized parallel FMM libraries (see, for example,
\cite{par1,par2}), and reduces the cost of the much more expensive and
unstructured near field interaction computation. In practice, we find that
the FMM cost increases much more slowly, since the number of the
interpolation nodes tends to be large compared to the number of the
quadrature nodes of the same order, from which it follows that the FMM cost
is dominated by the large number of target points, unless the order of the
far field quadrature rule is extremely high (see Figure \ref{fig:offload}).  In
addition, although the near field interaction (and self-interaction)
potential corrections are embarrassingly parallelizable, and their costs can
be potentially made very small with the use of many cores, it still requires
engineering efforts to attain the optimal parallel efficiency. On the other
hand, many efforts have been made in the design and implementation of
parallel FMM libraries, and thus, it is preferable to offload the near field
interaction computations onto the far field interaction computations.

\begin{remark}
We use high order far field
quadrature rules in this paper to resolve the Green's function, rather
than the density function. It follows that the density function can be
oversampled during the integration process, which is undesirable when its
evaluation is expensive. In this situation, it is recommended to construct a
lower-order interpolant of the density function.
\end{remark}

We demonstrate the effectiveness of the offloading technique in Section
\ref{sec:experiment_trade}.

\subsection{Fast interpolation of the volume potential with a staggered
mesh}
\label{sec:stagger}

In Section \ref{sec:interp}, we described an interpolation scheme
for the volume potential over a mesh element. In practice, it is 
often desirable to have the volume potential interpolated over all
the mesh elements in the domain $\Omega$, such that the evaluation
of the volume potential at any point in the domain is both instantaneous
and accurate. A common strategy is to let a single mesh serve both
as the quadrature mesh and the interpolation mesh, i.e., the interpolants
are constructed by evaluating the volume potential at the Vioreanu-Rokhlin
nodes over all mesh elements, and the integration domain $\Omega$ is
discretized into the same set of the mesh elements. 
In this section, we first show that such an approach does not lead to 
optimal computational efficiency. Then, we propose a simple modification that
significantly improves the efficiency.

First of all, we note that the time cost of the potential correction 
is strongly correlated with the location of the target: if the
target is extremely close to some edge in the mesh, extensive subdivisions
are required to resolve the near-singularity in the near and
self-interactions (see Sections \ref{sec:near_int}, \ref{sec:self_int} for
details); if the target is away from all of the edges, e.g., in the center
of a mesh element, very few subdivisions are required and the correction can
be made rapidly. Therefore, when a single mesh is both used for quadrature
and interpolation, the potential corrections become very expensive,
since the interpolation nodes tend to cluster around the edges and corners
of the mesh elements (see Figure \ref{fig:cluster}). Furthermore, the
precise identification of the near field and the offloading technique
(described in Sections \ref{sec:nf}, \ref{sec:offload}) become less helpful
if the majority of the nodes nearby a mesh element are indeed inside its near
field.

\begin{figure}[h]
    \centering
    \includegraphics[width=0.9\textwidth]{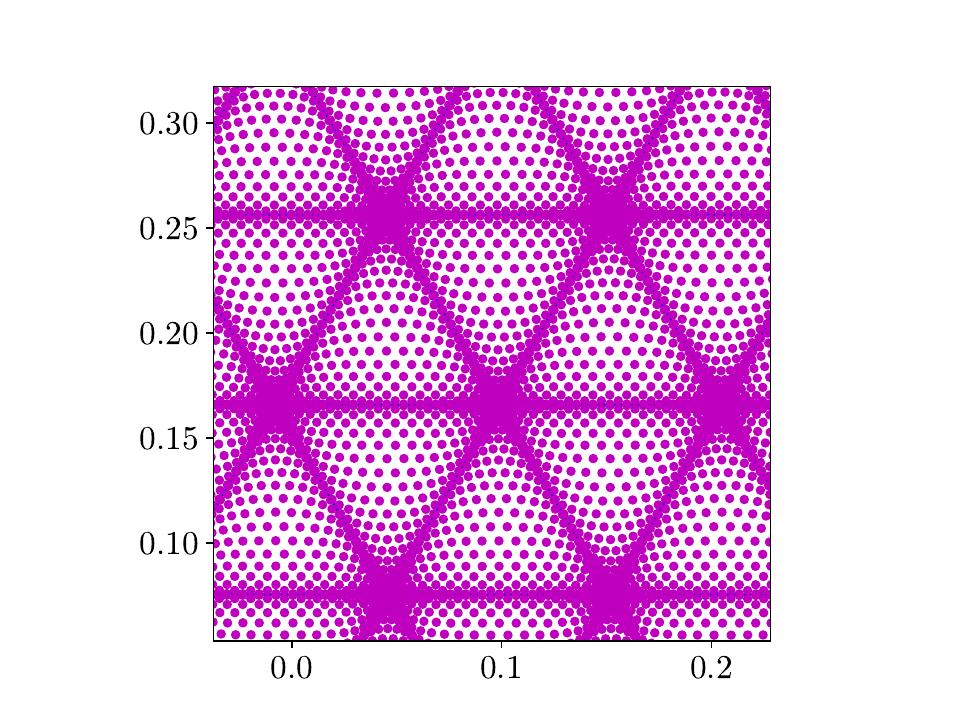}
  \caption{
      {\bf Interpolation nodes over a mesh}. Note that the nodes cluster
      around the edges. }
   \label{fig:cluster}
\end{figure}

However, it is important to note that the non-uniform potential correction
cost described above is an artifact of the discretization of the
domain (see Appendices \ref{sec:distmesh}, \ref{sec:mod_distmesh}), rather
than an intrinsic difficulty of the problem. We observe that if one instead
staggers the interpolation mesh with the quadrature mesh, both the number of
potential corrections for each target, and the number of subdivisions needed
for resolving the nearly-singular integrands, are reduced substantially.
By ``stagger'', we mean that the edges of the interpolation mesh are
maximally non-overlapping with the edges of the quadrature mesh, such that
the interpolation nodes cluster around the centroid of each element in the
quadrature mesh (see Figure \ref{fig:stag_mesh}).

A good heuristic approximation to such an interpolation mesh can be obtained
by shifting the initial guess to the meshing algorithm (the triangular
tiling, see Appendix \ref{sec:distmesh}), such that the initial guess for the
interpolation mesh becomes staggered with the initial guess for the
quadrature mesh. 

We report the performance of the interpolation, with and without the use of
a staggered mesh, in Section \ref{sec:experiment_bern}.

\begin{remark}
With the use of a staggered mesh, some target points could be
extremely close to some edges in the mesh. As is shown in Tables
\ref{tab:near_corr} and \ref{tab:self_stag}, this turns out not to be a
problem, since the number of required subdivisions increases logarithmically
with respect to the distance between the target point and the edges, and
such targets only make up a small proportion of the total target points.
\end{remark}

\section{Numerical experiments}
In this section, we illustrate the performance of the algorithm with several
numerical examples.  We implemented our algorithm in FORTRAN 77, and
compiled it using  the Intel Fortran Compiler, version 2021.5.0, with the
\texttt{-Ofast} flag. We conducted all experiments on a ThinkPad laptop, with 16 GB of
RAM and an Intel Core i7-10510U CPU. We note that, in our implementation, we
do not construct interpolants of the density function, but rather always
evaluate the density function naively. Thus, the timing results that we
present depend on the actual cost of evaluating the density functions.
Furthermore, for simplicity, we numerically reparametrize
the input parametrized curve (i.e., the boundary of the domain $\Omega$)
by arc length, which results in a somewhat costly evaluation of the
reparametrized curve. This, however, does not affect the spirit of the
experimental results that we present, i.e., the acceleration of
the computation using the techniques described in this paper.

We use the code that is publicly available in the companion code of
\cite{bremer1} for the evaluation of Koornwinder polynomials. We also
use the tables of Xiao-Gimbutas rules and Vioreanu-Rokhlin rules that are
publicly available in \cite{triasymq}. We use the FMM library published in
\cite{fmm2d} in our implementation. We make no use of the
high-performance linear algebra libraries, e.g., BLAS, LAPACK, etc.

We list the notations that appear in this section below.
\begin{itemize}
\item $h_0$: the mesh element size.
\item $\varepsilon$: the error tolerance of the far and near field
interaction computations (controlled by the precise near field geometry
analysis).
\item $N$: the order of the quadrature rule. In particular, we use the following
notations to denote the value under the special settings.
\begin{itemize}
\item $\nordf$: the order of the far field quadrature rule.
\item $\nordn$: the order of the quadrature rule used in the near field
interaction computation.
\item $\nordl$: the order of the Gauss-Legendre quadrature rule used in the
self-interaction computation (along the arc length coordinate).
\item $\nordg$: the order of the generalized Gaussian quadrature rule used in the
self-interaction computation (along the radial coordinate), in the sense
that it integrates both $\phi(r)$ and $r\log r\cdot \phi(r)$ over $[0,1]$
exactly, where $\phi(r)$ is a polynomial of order up to $\nordg$.
\item $\nords$: the order of the interpolation scheme.
\end{itemize}
\item $N_{\textit{tot}}$: the total number of discretization nodes.
\item $\ncor$: the average number of the potential corrections (including
corrections both over triangles and curved elements), for each
target location. 
\item $\nsubn$: the average number of subtriangles that one needs to
integrate over, to resolve the nearly-singular integrands in the near field
interaction computations, including both the triangle and curved element
integration domains, for each target location.
\item $\nsubl$: the total number of subdivisions along the arc length coordinate
to resolve the nearly-singular integrands in the self-interaction computations.
\item $T_{\ft}$: The time spent on far field interaction computations.
\item $T_{\nt}$: The time spent on near field interaction computations.
\item $T_{\ft+\nt}$: The total time spent on far and near field interaction computations.
\item $T_{\st}$: The time spent on self-interaction computations.
\item $T_{tot}$: The total time for the evaluation of the volume potential at all
of the discretization nodes.
\item $E_{\textit{abs}}$: the largest absolute error of the potential evaluations
at all of the interpolation nodes.
\item $\tilde E_{\textit{abs}}$: the largest absolute error of the solution to Poisson's
equation at all of the interpolation nodes.
\item $\frac{\#\textit{tgt}}{\textit{sec}}$: the number of targets that the algorithm (with
the use of precise near field geometry analysis and a staggered mesh) can
evaluate the volume potential at, per second.
\end{itemize}

We list the superscripts that appear in the notations below.
\begin{itemize}
\item $0$ (e.g., $c^0, s_n^0$): the experimental setting where the
near field is approximated naively by a ball, and the staggered mesh is not
used.
\item $+$ (e.g., $c^+, s_n^+$): the experimental setting where the
near field is approximated by the union of Bernstein ellipses, and the
staggered mesh is not used.
\item $*$ (e.g., $c^*, s_n^*$): the experimental setting where the
near field is approximated by the union of Bernstein ellipses, and the
staggered mesh is used.
\item $\textit{quad}$ (e.g., $N_{\textit{tri}}^{\textit{quad}}$, $N_{\textit{tot}}^{\textit{quad}}$): the quadrature
mesh-related information.  
\item $\textit{interp}$ (e.g., $N_{\textit{tri}}^{\textit{interp}}$,
$N_{\textit{tot}}^{\textit{interp}}$): the interpolation mesh-related information.
\end{itemize}

In Tables \ref{tab:ord_len_quad} and \ref{tab:ord_len_interp}, we tabulate
the orders and lengths of the quadrature rules used in our implementation. 

\begin{table}[h!!]
    \begin{center}
    \begin{tabular}{ccc}
    Type & $\nordf$ & Length  \\
    \midrule
    \addlinespace[.5em]
    X-G & 12 & 32 \\
    \addlinespace[.25em]
    X-G & 33 & 201 \\
    \addlinespace[.25em]
    X-G & 40 & 290  \\
    \addlinespace[.25em]
    X-G & 50 & 444  \\
    \addlinespace[.25em]
    GGQ & 8 &  8 \\
    \end{tabular}
    \caption{
    {\bf The orders and lengths of the quadrature rules}.
    We note that X-G denotes the Xiao-Gimbutas rules, and GGQ
    denotes the generalized Gaussian quadrature rule that we
    make use of in Section \ref{sec:self_int}.
    }
    \label{tab:ord_len_quad}
    \end{center}
\end{table}

\begin{table}[h!!]
    \begin{center}
    \begin{tabular}{cccc}
    $\nords$ & $\nordf$ & Length & $\kappa$\\
    \midrule
    \addlinespace[.5em]
    12 & 20 & 91 &19.2\\
    \addlinespace[.25em]
    20 & 33 & 231&194  \\
    \end{tabular}
    \caption{
    {\bf The orders, lengths and condition numbers of the Vioreanu-Rokhlin rules}.
    We denote the condition number of the interpolation matrix by $\kappa$.
    }
    \label{tab:ord_len_interp}
    \end{center}
\end{table}

\subsection{Effectiveness of the acceleration techniques}
In this section, we demonstrate the effectiveness of the acceleration
techniques described in Section \ref{sec:acceleration}. We fix $\nordn=12$,
$\nordl=16$, $\nordg=8$ (in fact, the values of $\nordl$ and $\nordg$ are
irrelevant to the experimental results presented in this Section, as we only
report the number of subdivisions). 

\subsubsection{Precise near field estimation and staggered mesh-based interpolation}
  \label{sec:experiment_bern}
In this section, we first demonstrate how well the near field is
characterized by the union of Bernstein ellipses in Figures \ref{fig:nf1},
\ref{fig:nf0}, \ref{fig:nf2}.
Additionally, we provide an illustration of two staggered
meshes in Figure \ref{fig:stag_mesh}. Then, we report the effect of the near
field geometry analysis and the use of a staggered mesh on the average number of near
field potential corrections, and the average number of the subtriangles that
one needs to integrate over, for each target, in Table \ref{tab:near_corr}.
In Table \ref{tab:self_stag}, we also report the number of subdivisions
along the arc length coordinate in the computation of self-interactions,
with and without the use of a staggered mesh.

To estimate the errors, we consider the computation of 
  \begin{align}
u(x)=\iint_{\Omega} \frac{1}{2\pi}\log(\norm{x-y})\d A_y,
  \label{for:exp_ux}
  \end{align}
where the integration domain $\Omega$ is a circle with radius $1$.
In Table \ref{tab:prob_size}, we report the size of this problem
for varioius mesh sizes $h_0$.  It can be easily shown that
$u(x)=\frac{1}{4}(\norm{x}^2-1)$.  Again, we note that the cost of
evaluating the density function is independent of the experimental results
that we present here, as we only report the number of corrections and
subtriangles that one needs to integrate over, rather than the actual
time costs.

\begin{figure}[h]
    \centering
    \begin{subfigure}{0.49\textwidth}
      \centering
      \includegraphics[width=\textwidth]{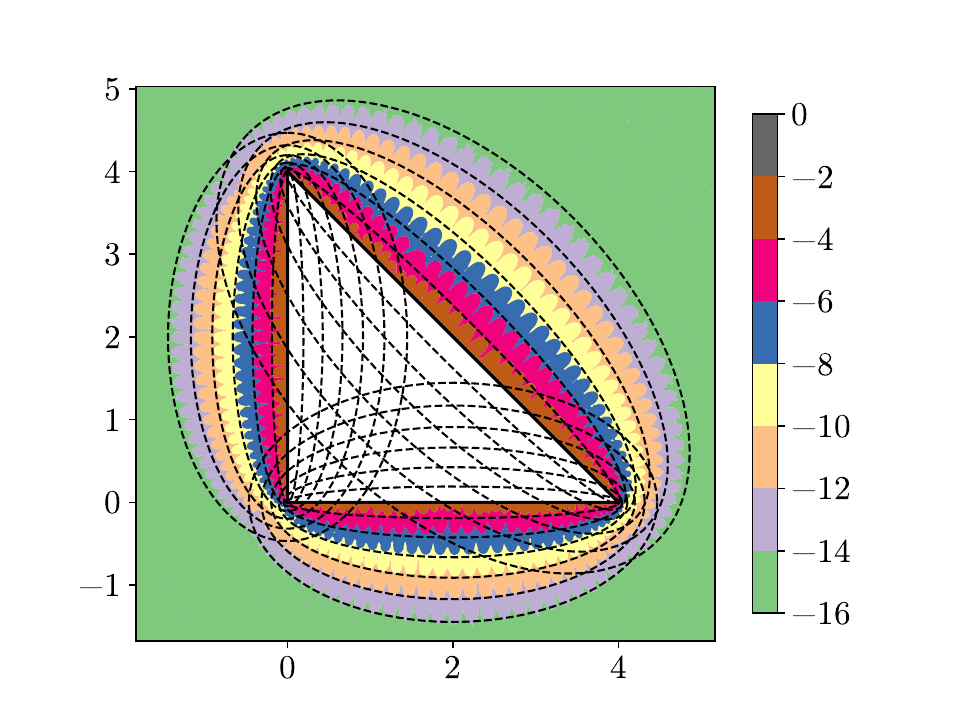}
      \caption{\label{fig:nf1_a}}
    \end{subfigure}
    \begin{subfigure}{0.49\textwidth}
      \centering
      \includegraphics[width=\textwidth]{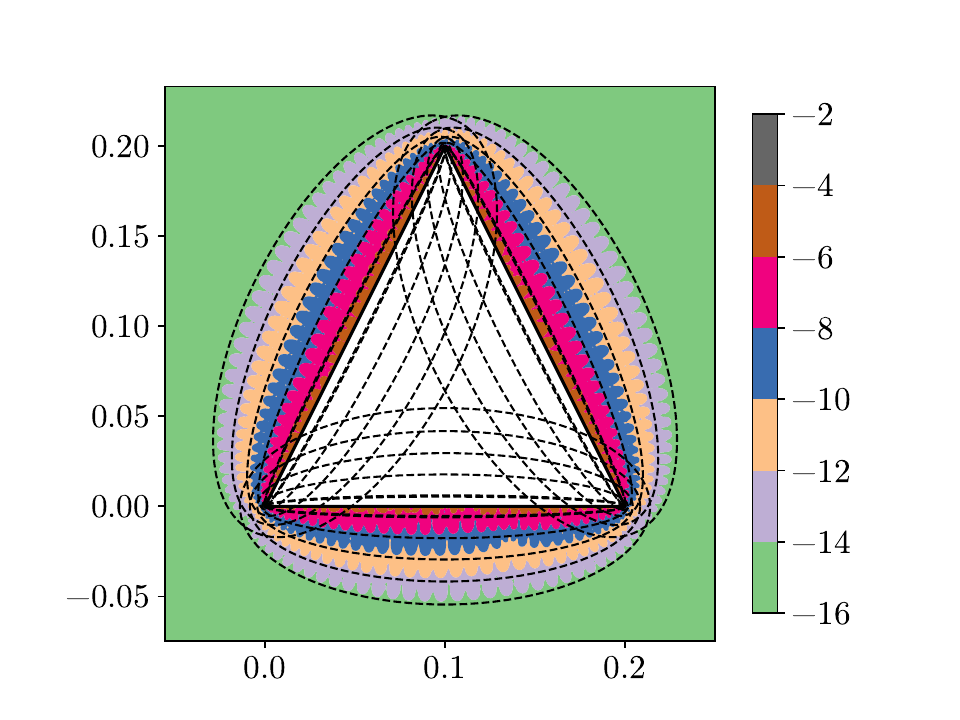}
      \caption{}
    \end{subfigure}
  \begin{subfigure}{0.49\textwidth}
    \centering
    \includegraphics[width=\textwidth]{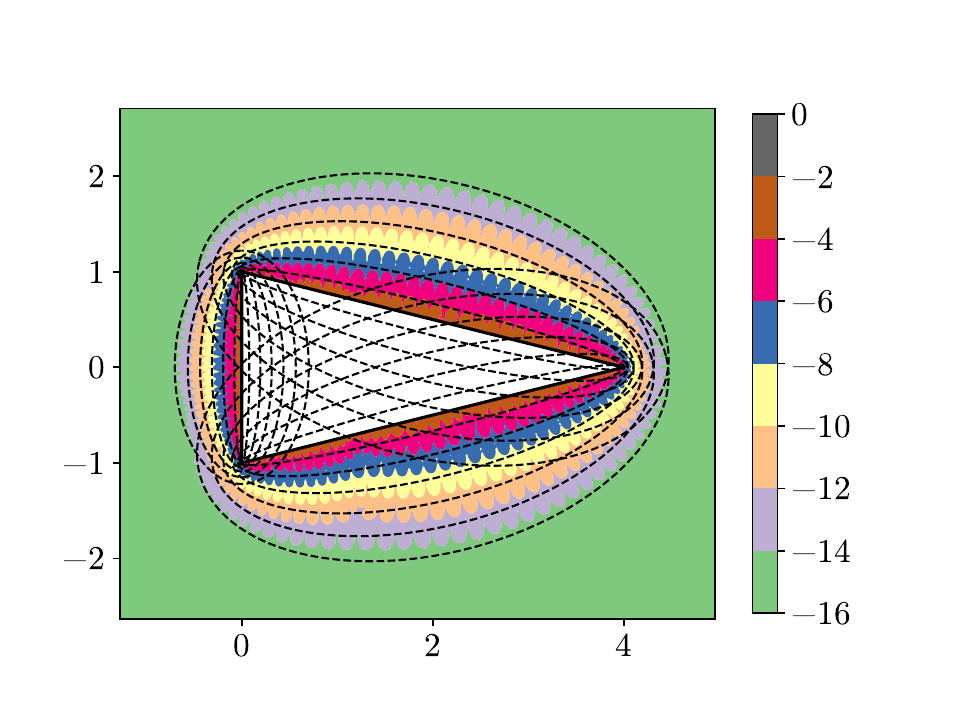}
    \caption{}
  \end{subfigure}
  \begin{subfigure}{0.49\textwidth}
    \centering
    \includegraphics[width=\textwidth]{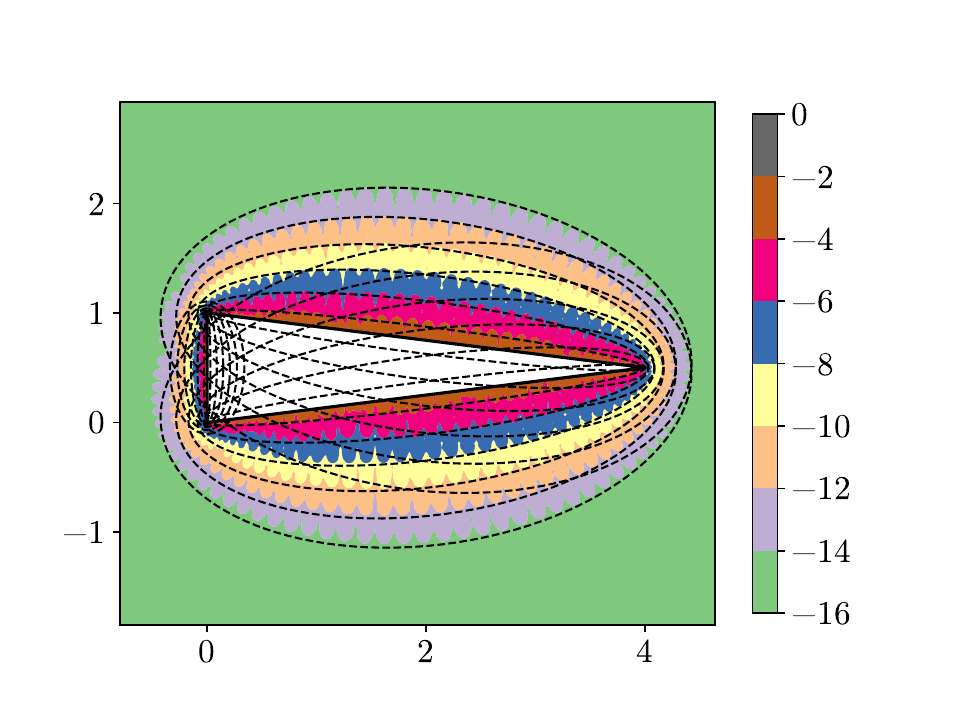}
    \caption{\label{fig:nf1_d}}
  \end{subfigure}

  \caption{{\bf The contour plot of $\log_{10}E_{abs}$
  with the Xiao-Gimbutas quadrature rule of order 40, and the
  estimation of the near field}. We select the density $f$ to be $1$, and
  the parameter $C_N$ to be $2$ in all these plots.}
  \label{fig:nf1}
\end{figure}

\begin{figure}[h]
    \centering
    \begin{subfigure}{0.49\textwidth}
      \centering
      \includegraphics[width=\textwidth]{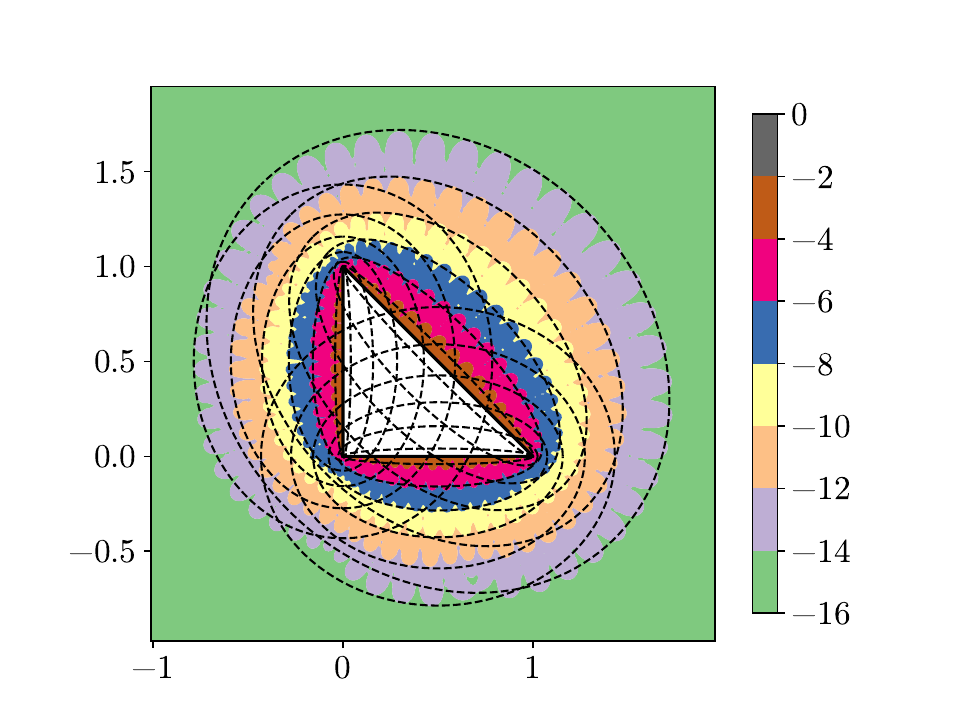}
      \caption{\label{fig:nf0_a}}
    \end{subfigure}
    \begin{subfigure}{0.49\textwidth}
      \centering
      \includegraphics[width=\textwidth]{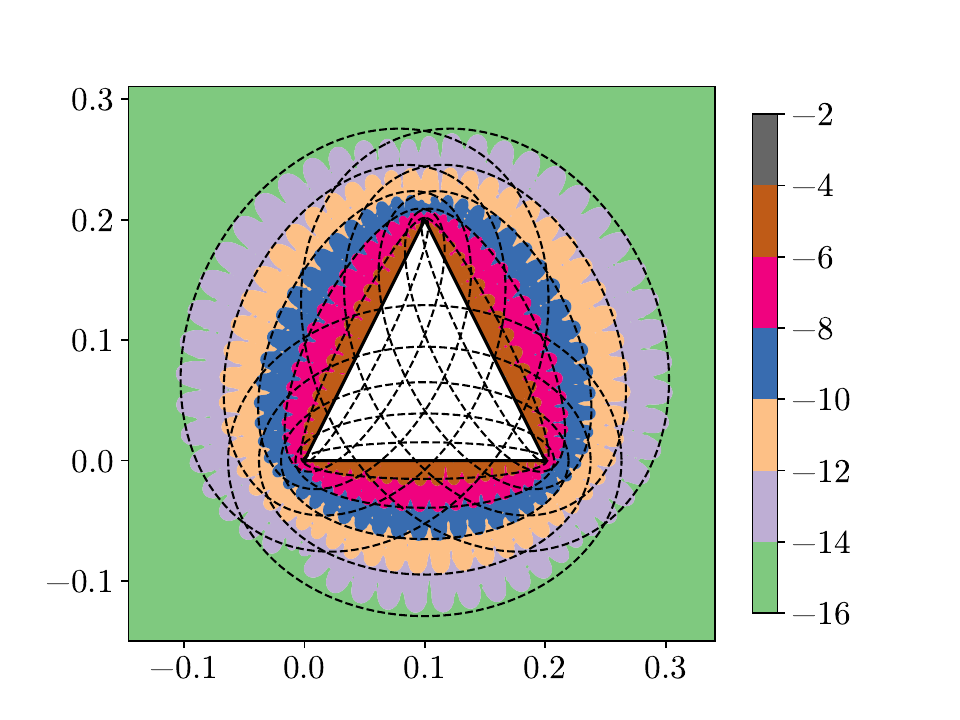}
      \caption{}
    \end{subfigure}

  \caption{{\bf The contour plot of $\log_{10}E_{abs}$
  with the Xiao-Gimbutas quadrature rule of order 20, and the
  estimation of the near field}. We select the density $f$ to be $1$, and
  the parameter $C_N$ to be $6.8$ in all these plots. One can observe that
  when the error tolerance is low (say, $E_{abs}<10^{-14}$), the true near
  field is almost a ball. 
  }
  \label{fig:nf0}
\end{figure}

\begin{figure}[h]
    \centering
    \begin{subfigure}{0.49\textwidth}
      \centering
      \includegraphics[width=\textwidth]{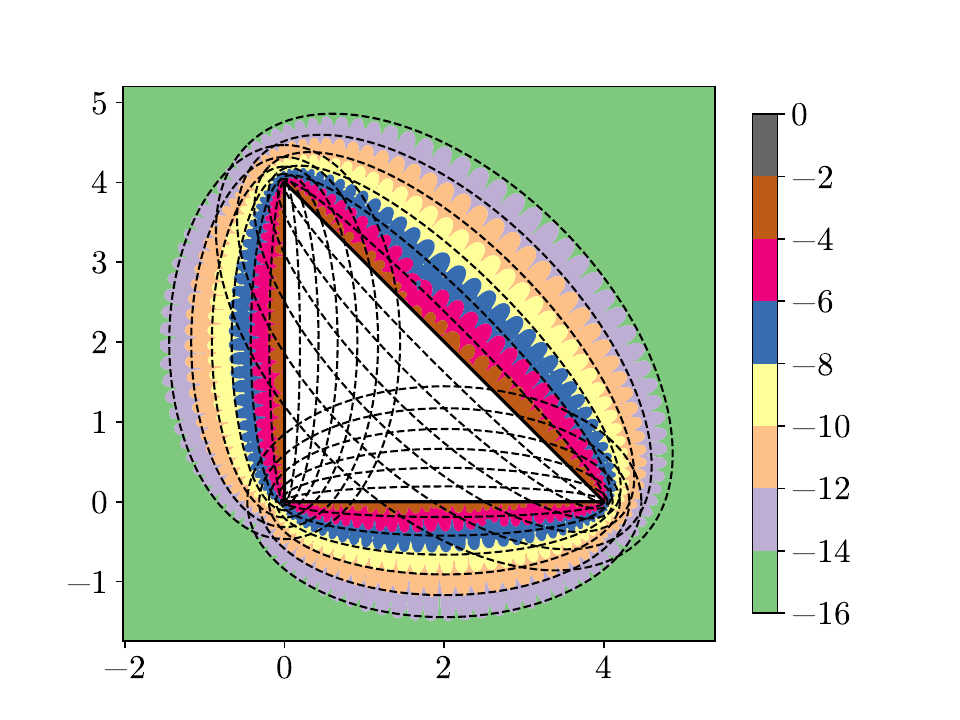}
      \caption{$f(x,y)=\sin(x+2y)$ \label{fig:nf2_a}}
    \end{subfigure}
    \begin{subfigure}{0.49\textwidth}
      \centering
      \includegraphics[width=\textwidth]{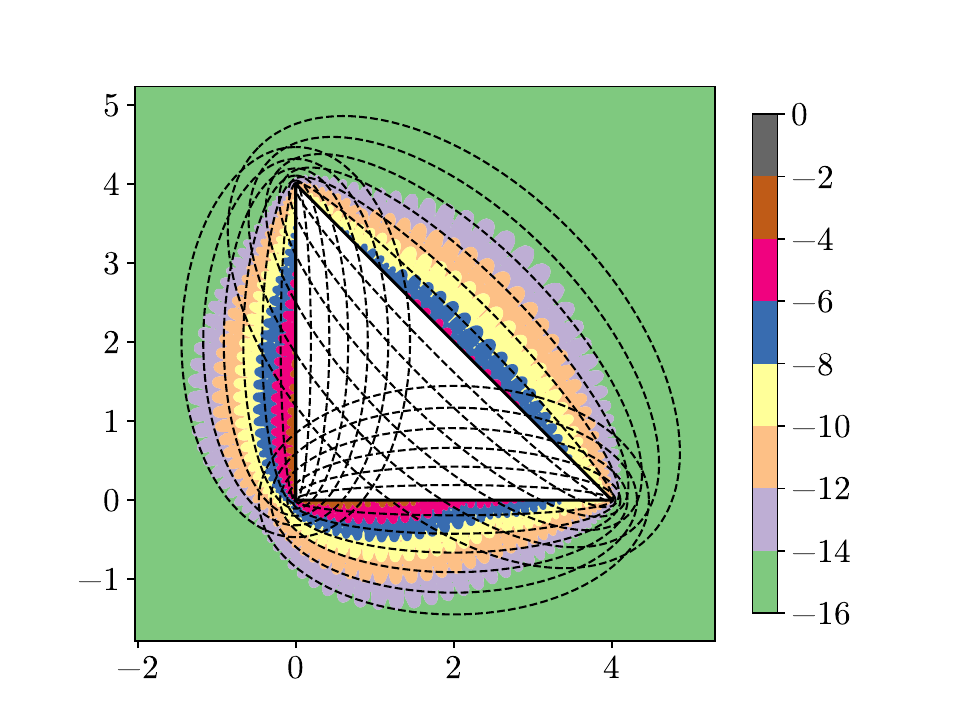}
      \caption{$f(x,y)=\exp(-x^2-y^2)$ \label{fig:nf2_b}}
    \end{subfigure}
  \caption{{\bf The contour plot of $\log_{10}E_{abs}$
  with the Xiao-Gimbutas quadrature rule of order 40, and the
  estimation of the near field}. We select the parameter $C_N$ to be $2$
  in all these plots. Comparing Figure \ref{fig:nf2_a}
  with Figure \ref{fig:nf1_a}, one can observe that the near field
  only becomes slightly larger when the density function changes from $1$ to
  $\sin(x+2y)$. In general, as long as the density function can
  be well-resolved by the selected quadrature rule, the shape of the near
  field is nearly the same as the shape of the near field with density
  function $1$. In Figure \ref{fig:nf2_b}, one can observe that the actual near
  field is much smaller than the predicted near field, especially for
  the region that is away from the origin $(0,0)$. This is because the
  density function $\exp(-x^2-y^2)$ decays exponentially as $|x|, |y|$
  increase, which cancels out the logarithmic singularity that is away
  from the origin. Thus, such a behavior is expected.
  }
  \label{fig:nf2}
\end{figure}

\begin{figure}[h]
    \centering
    \includegraphics[width=0.75\textwidth]{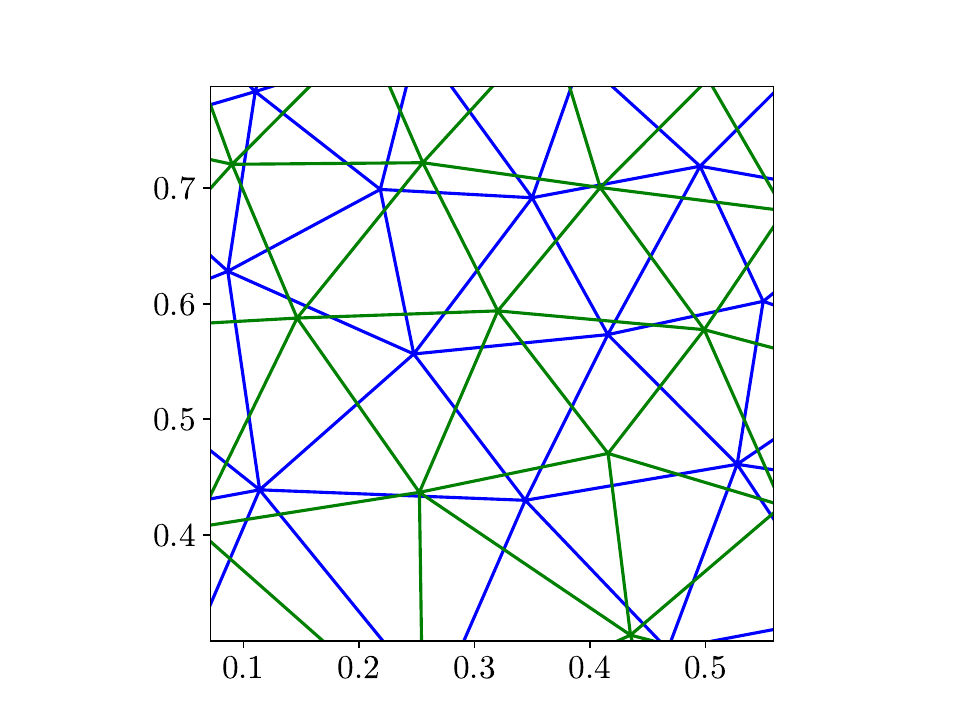}
  \caption{
      {\bf The staggered mesh under a microscope}. }
   \label{fig:stag_mesh}
\end{figure}

\begin{table}[h!!]
    \begin{center}
    \begin{tabular}{ccc}
    $h_0$ & $N_{\textit{tri}}$ &  $N_{\textit{tot}}$  \\
    \midrule
    \addlinespace[.5em]
    0.2 & 143 &  33033 \\
    \addlinespace[.25em]
    0.1 & 657 &  151767 \\
    \addlinespace[.25em]
    0.05 & 2766 & 638946 \\
    \end{tabular}
    \caption{
    {\bf The total number of mesh elements for different mesh sizes $h_0$
    with $N_s=20$}.
    We also report the order of the Vioreanu-Rokhlin rules, and the 
    total number of the discretization nodes. 
    }
    \label{tab:prob_size}
    \end{center}
\end{table}

\begin{table}[h!!]
    \begin{center}
    \begin{tabular}{c cccc cccc c}
    $\varepsilon$ & $\ncor^{0}$&
    $\ncor^{+}$& $\ncor^{*}$&$\ncor^*/\ncor^0$ &$\nsubn^{0}$ &
    $\nsubn^{+}$ & $\nsubn^{*}$ & $\nsubn^*/\nsubn^0$ &  $E_{\textit{abs}}^*$ \\
    \midrule
    \addlinespace[.5em]
      $10^{-8}$   & 1.46 & 0.99 & 0.50 & 33.9\% &15.7 & 9.67 & 4.19 &26.8\%&2.06\e{-8}\\
    \addlinespace[.25em]
     $10^{-10}$ &   2.32 & 1.49 & 0.91 & 39.2\% &30.7 & 20.2  &10.3 &33.7\%&1.78\e{-10}\\
    \addlinespace[.25em]
     $10^{-12}$ &   2.94 & 2.03 & 1.39 & 47.3\% &55.0 & 36.9  &22.4 &40.8\%&1.61\e{-12}\\
    \addlinespace[.25em]
      $10^{-14}$ &   3.49 & 2.64 & 1.98 & 56.5\% & 103 &72.1 &47.1 &45.3\%&2.47\e{-14}\\
    \end{tabular}
    \caption{
    {\bf The average number of near field potential corrections, and the
    average number of subtriangles that need to be integrated over to resolve
    the nearly-singular integrand during the computation of near field
    interactions, for each target, with and without the precise near field
    analysis and the use of a staggered mesh.} Note that we fine-tuned 
    the parameters, such that $E_{\textit{abs}}^0$ and $E_{\textit{abs}}^+$
    are around the same size as $E_{\textit{abs}}^*$.
    }
    \label{tab:near_corr}
    \end{center}
\end{table}

\begin{table}[h!!]
    \begin{center}
    \begin{tabular}{c ccc }
    $h_0$ & $\nsubl^{0}$ & $\nsubl^{*}$ & $\nsubl^*/\nsubl^0$ \\
    \midrule
    \addlinespace[.5em]
      $0.2$   &18.8 & 16.5 &87.6\%\\
    \addlinespace[.25em]
      $0.1$   &18.8 & 16.2 &86.1\%\\
    \addlinespace[.25em]
      $0.05$   &18.7 & 16.0 &85.7\%\\
   \end{tabular}
    \caption{
    {\bf Average number of self-interaction subdivisions with and without
    the use of a staggered mesh. } One can observe that the use of a
    staggered mesh reduces the amount of computation of the
    self-interactions by $15\%$.
    }
    \label{tab:self_stag}
    \end{center}
\end{table}

In fact, the use of the near field geometry analysis together with a staggered mesh is more
powerful than Table \ref{tab:near_corr} indicates, for the following
reason. To make a fair comparison with the standard way of computing the
near interactions (i.e., using the Vioreanu-Rokhlin rule over a single mesh),
we are bound to a fixed pair of quadrature and interpolation
orders. Comparing Figure \ref{fig:nf1} with Figure \ref{fig:nf0}, it is easy
to see that the experimental results in Table \ref{tab:near_corr} will be even
more impressive if we use a higher-order quadrature rule. We exploit
this fact in Sections \ref{sec:experiment_trade} and
\ref{sec:experiment_pot}.

\pagebreak

\subsubsection{Trade-off between the far and near field interaction computations}
\label{sec:experiment_trade}
In this section, we demonstrate the effectiveness of the offloading
technique for reducing the total amount of time spent on the 
near field interaction computations in Figure \ref{fig:offload}. In our
examples, we consider the computation of
  \begin{align}
u(x)=\iint_{\Omega} \frac{1}{2\pi}\log(\norm{x-y}) f(y)\d A_y,
  \end{align}
where the integration domain $\Omega$ is a circle with radius $1$,
and the density $f$ is
  \begin{align}
f(x_0,y_0)=&\,4e^{-(x_0+1.6)^2-(y_0+0.2)^2}\cdot(x_0^2+y_0^2+3.2x_0+0.4y_0+1.6) + \notag\\
&\,4e^{-(x_0-0.2)^2-(y_0-1)^2}\cdot (x_0^2+y_0^2-0.4x_0-2y_0+0.04).
  \end{align}

\begin{figure}[h]
    \centering
    \begin{subfigure}{0.49\textwidth}
      \centering
      \includegraphics[width=\textwidth]{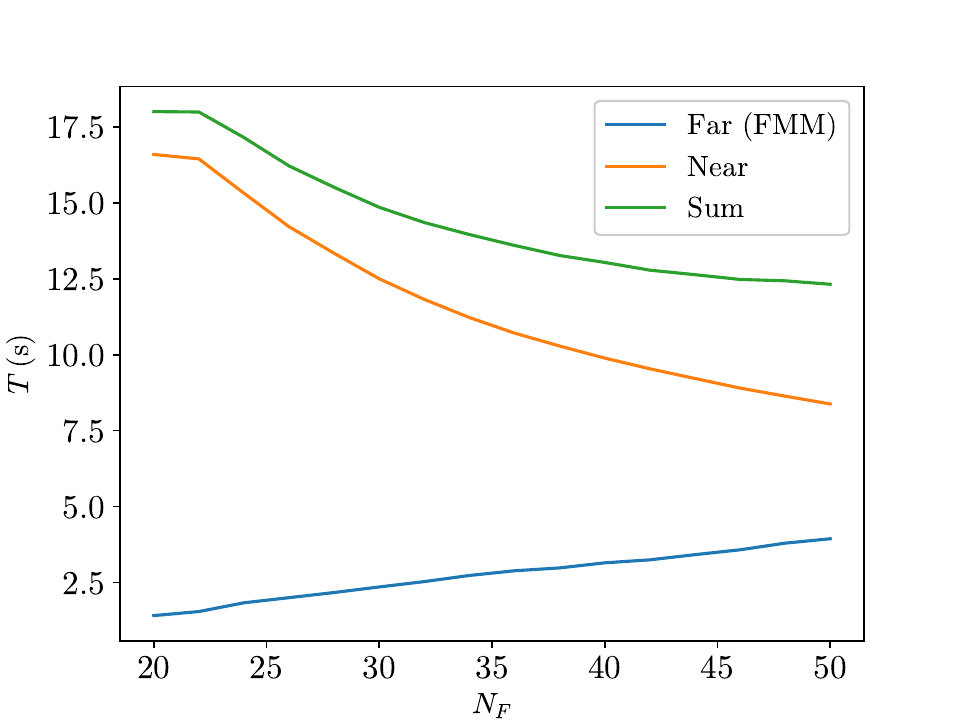}
      \caption{$\varepsilon=10^{-14}, N_s=20$}
    \end{subfigure}
    \begin{subfigure}{0.49\textwidth}
      \centering
      \includegraphics[width=\textwidth]{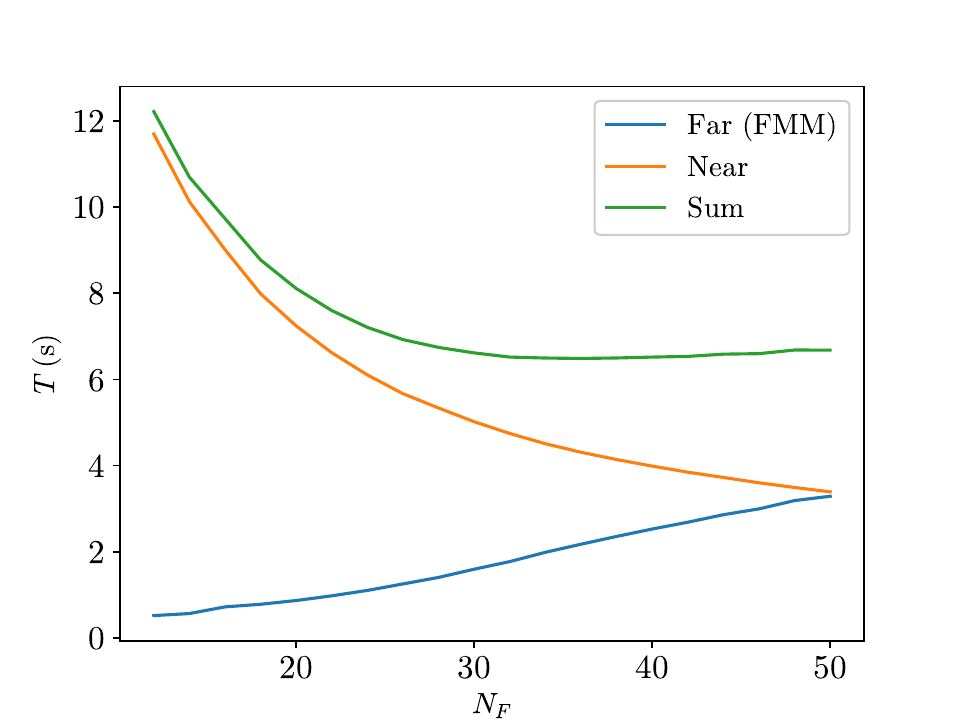}
      \caption{$\varepsilon=10^{-14}, N_s=12$}
    \end{subfigure}
  \begin{subfigure}{0.49\textwidth}
    \centering
    \includegraphics[width=\textwidth]{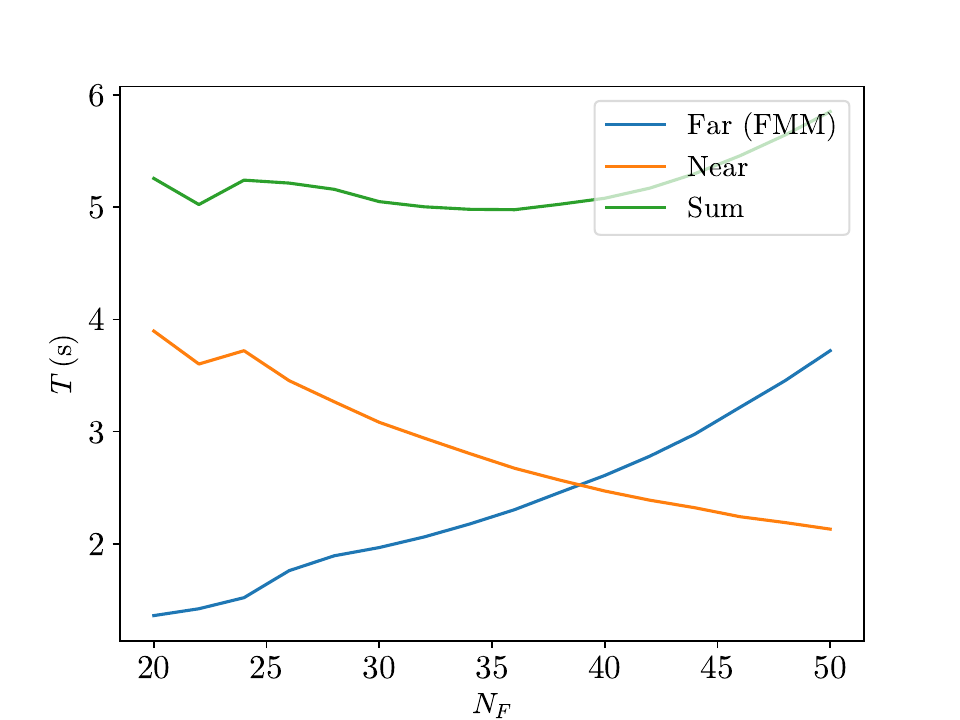}
    \caption{$\varepsilon=10^{-10}, N_s=20$}
  \end{subfigure}
  \begin{subfigure}{0.49\textwidth}
    \centering
    \includegraphics[width=\textwidth]{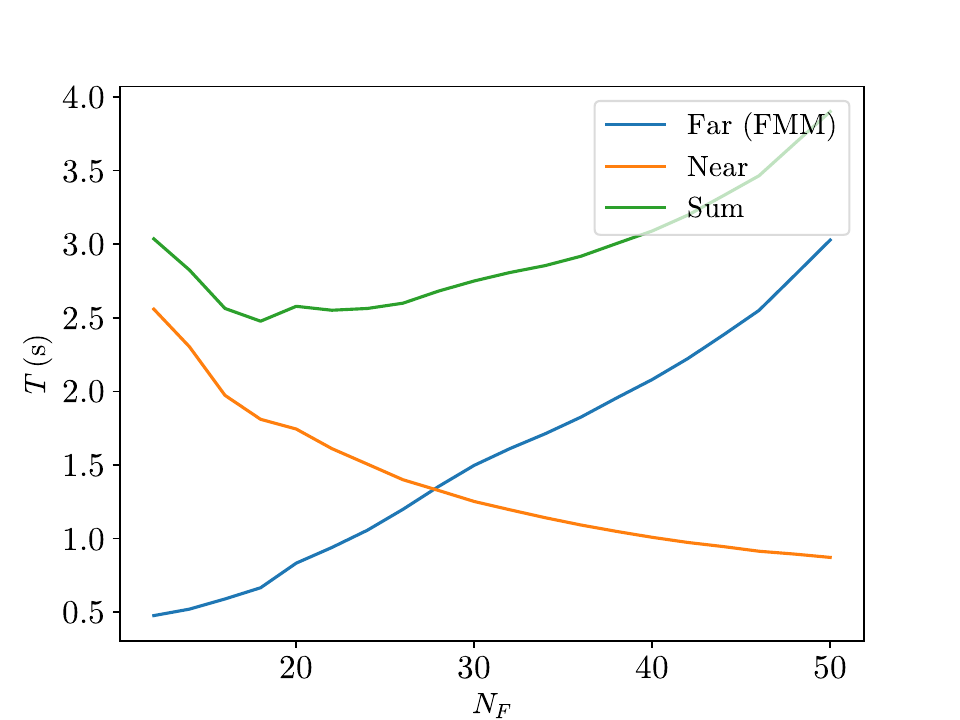}
    \caption{$\varepsilon=10^{-10}, N_s=12$}
  \end{subfigure}

  \caption{{\bf The offloading technique applied to volume potential
  evaluations with different interpolation orders and error tolerances,
  without parallelization}. 
  We fix the element size to be $h_0=0.1$ in all cases. If the FMM computations
  are parallelized, the time spent on the computation of far field
  interactions can be reduced dramatically, depending on the number of
  cores.}
  \label{fig:offload}
\end{figure}

\pagebreak

\subsection{Computation of the volume potential and Poisson's equation} 
\label{sec:experiment_pot}
In this section, we report the accuracy (implicitly, by reporting the
accuracy of the solution to Poisson's equation) and speed of the computation
of the volume potential
  \begin{align}
u(x)=\iint_{\Omega} \frac{1}{2\pi}\log(\norm{x-y})f(y)\d A_y,
  \end{align}
where the density function
  \begin{align}
f(x_0,y_0)=&\,4e^{-(x_0+1.6)^2-(y_0+0.2)^2}\cdot(x_0^2+y_0^2+3.2x_0+0.4y_0+1.6) + \notag\\
&\,4e^{-(x_0-0.2)^2-(y_0-1)^2}\cdot (x_0^2+y_0^2-0.4x_0-2y_0+0.04),
  \end{align}
and the domain $\Omega$ is a wobbly ellipse, as is displayed in Figure
\ref{fig:wob_domain_mesh}. The sizes of the our experiments are
presented in Table \ref{tab:poi_prob_size}. We compare the performance of
the algorithm, with and without the use of precise near field geometry
analysis and a staggered mesh, in Tables \ref{tab:poi1} and \ref{tab:poi2}.
Additionally, we solve the Poisson's equation
  \begin{align}
\hspace*{-0em}\nabla^2 \varphi = &\,f(x_0,y_0) \text{ in } \Omega, \notag\\
\hspace*{-0em}\varphi=&\,g(x_0,y_0) \text{ on } \partial\Omega,
\label{for:poi}
  \end{align}
where 
\begin{align}
g(x_0,y_0)=\exp(-(x_0+1.6)^2-(y_0+0.2)^2)+\exp(-(x_0-0.2)^2-(y_0-1)^2),
\end{align}
and we present the error heat map in Figure \ref{fig:poi_heat}.  Below, we
sketch the algorithm for solving Poisson's equation with the use of
the volume potential.

Since the volume potential 
  \begin{align}
u(x)=\frac{1}{2\pi}\iint_\Omega \log\norm{x-y} f(y) \d A_y 
\label{for:volpot_poi}
  \end{align}
satisfies
  \begin{align}
\nabla^2 u = f \text{ in } \Omega,
  \end{align}
provided that $u^h:\Omega\to\R$ solves Laplace's equation
  \begin{align}
\nabla^2 u^h = 0 \text{ in } \Omega, \notag\\
u^h=g-u \text{ on } \partial\Omega, 
  \label{for:lap}
  \end{align}
we have that $\varphi:=u^h+u$ satisfies the given Poisson's equation. 
In our implementation, we compute $u(x)|_{\partial\Omega}$ through
interpolation (see Remark \ref{rem:bndry_interp}), from which it follows
that the condition number of the interpolation matrix affects the accuracy
of our computational results (see \cite{vior,triasymq}). Then, we find the
solution to the Laplace equation (\ref{for:lap}) by the boundary integral
equation method \cite{fds}.  Finally, we note that the true solution
$\varphi$ to this Poisson's equation equals $g$.

\begin{figure}[h]
    \centering
    \includegraphics[width=0.75\textwidth]{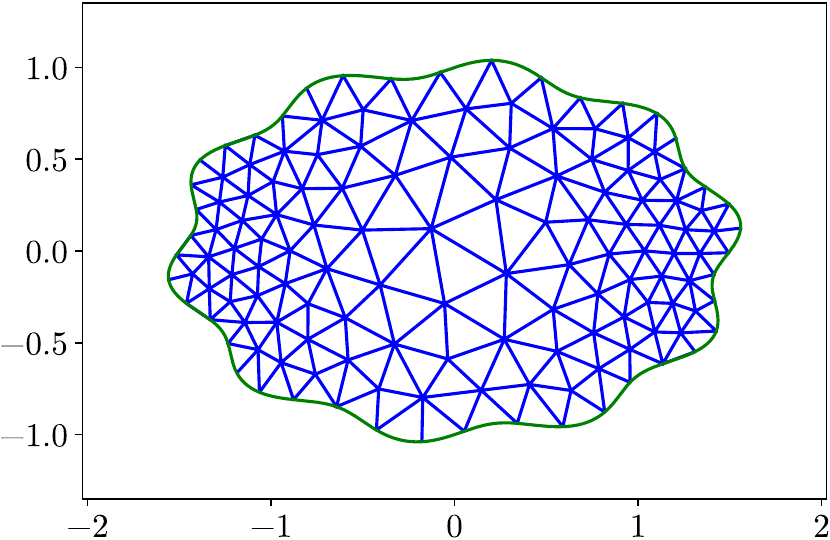}

  \caption{
      {\bf The wobbly ellipse domain $\Omega$ discretized by an unstructured mesh
      with $h_0=0.2$}. }
   \label{fig:wob_domain_mesh}

\end{figure}

\begin{table}[h!!]
    \begin{center}
    \begin{tabular}{ccccc}
    $h_0$ & $N_{\textit{tri}}^{\textit{quad}}$ &
    $N_{\textit{tri}}^{\textit{interp}}$ &
    $N_{\textit{tot}}^{\textit{quad}}$ &
    $N_{\textit{tot}}^{\textit{interp}}$ \\
    \midrule
    \addlinespace[.5em]
    0.2 & 231 & 225&  102564 & 51975 \\
    \addlinespace[.25em]
    0.1 & 1001& 997 & 444444 & 230307\\
    \addlinespace[.25em]
    0.05 &4183& 4172& 1857252 & 963732\\
    \end{tabular}
    \caption{
    {\bf The total number of mesh elements, quadrature and interpolation
    nodes for different mesh sizes $h_0$}. In this table, we set
    $N_f=50,N_s=20$.
    }
    \label{tab:poi_prob_size}
    \end{center}
\end{table}

\begin{table}[h!!]
    \begin{center}
    \begin{tabular}{cc ccc cc cc c c}
    $\varepsilon$ & $h_0$ & $T_{\ft+\nt}^{0}$&
    $T_{\ft+\nt}^*$ & $\frac{T_{\ft+\nt}^{*}}{T_{\ft+\nt}^{0}}$&$T_{\st}^{0}$ &
    $T_{\st}^{*}$  & $T_{\textit{tot}}^0$ & 
    $T_{\textit{tot}}^*$ &   $\tilde E_{\textit{abs}}^{*}$ & $\frac{\#\textit{tgt}}{\textit{sec}}$\\
    \midrule
    \addlinespace[.5em]
    $10^{-10}$&  $0.2$   & 4.14 & 1.98 & 47.8\% &  2.64 & 2.39 & 7.08 & 4.72&
    2.66\e{-9} & 1.10\e{4}\\
    \addlinespace[.25em]
    &  $0.1$   & 16.0 & 7.93 & 49.4\% &  12.3 & 11.0 & 30.3 & 21.0&
    2.42\e{-9} & 1.10\e{4}\\
    \addlinespace[.25em]
    &  $0.05$   & 60.5 & 29.6 & 49.0\% &  50.9 & 45.3 & 85.9 & 46.9&
    4.69\e{-9} & 1.12\e{4}\\
    \addlinespace[.25em]
    $10^{-14}$&  $0.2$   & 9.69 & 5.96 & 61.5\% &  2.65 & 2.72 & 12.6 & 9.06&
    2.42\e{-12} & 5.73\e{3}\\
    \addlinespace[.25em]
    &  $0.1$   & 36.3 & 19.3 & 53.2\% &  12.5 & 11.4 & 50.6 & 32.7&
    4.13\e{-13} & 7.05\e{3}\\
    \addlinespace[.25em]
    &  $0.05$   & 136 & 67.2 & 49.3\% &  70.7 & 63.4 & 217 & 141&
    4.54\e{-13} & 6.84\e{3}\\
    \addlinespace[.25em]
    \end{tabular}
    \caption{
    {\bf Comparisons of the computational time of the volume potential
    evaluation with $20$th order interpolation of the solution, without
    parallelization}. When $\varepsilon=10^{-10}$, we set $\nordf=40$,
    $\nordn=12$, $\nordl=10$, $\nordg=8$; when $\varepsilon=10^{-14}$, we
    set $\nordf=50$, $\nordn=12$, $\nordl=10$ (except that $\nordl=14$ when
    $h_0=0.05$), $\nordg=8$. In all of these experiments, the Vioreanu-Rokhlin
    rule of order $20$ is used in the naive case indicated by the
    superscript $0$. It is important to note that we only report the error of
    the approximation to the solution to Poisson's equation here, as the
    analytic solution for the volume potential is not available. However, we
    note that the actual accuracy of the volume potential evaluations is
    higher than the one shown in the $\tilde E_{abs}^{*}$ column, due to the use
    of not perfectly-conditioned interpolation matrices. Note that we
    fine-tuned the parameters, such that $E_{\textit{abs}}^0$ is
    around the same size as $E_{\textit{abs}}^*$.
    }
    \label{tab:poi1}
    \end{center}
\end{table}

\begin{table}[h!!]
    \begin{center}
    \begin{tabular}{cc ccc cc}
    $\varepsilon$ & $h_0$ & $T_{\ft}^{0}$& $T_{\ft}^*$ &  $T_{\nt}^{0}$&
    $T_{\nt}^*$ &$\frac{T_{\nt}^{*}}{T_{\nt}^{0}}$\\
    \midrule
    \addlinespace[.5em]
    $10^{-10}$&  $0.2$   & 0.55 & 0.84 & 3.59 &  1.15 & 31.9\%\\
    \addlinespace[.25em]
    &  $0.1$   & 2.75 & 4.10 & 13.3 &  3.83 &28.8\%\\
    \addlinespace[.25em]
    &  $0.05$   & 11.7 & 17.3 & 48.8 & 12.3 &25.2\%\\
    \addlinespace[.25em]
    $10^{-14}$&  $0.2$   & 0.63 &1.47 & 9.07 &  4.49 & 49.5\%\\
    \addlinespace[.25em]
    &  $0.1$   & 3.15 & 6.39 & 33.1 &  12.9 & 39.0\%\\
    \addlinespace[.25em]
    &  $0.05$   & 13.3 & 27.0 & 123 & 40.2 & 32.6\%\\
    \addlinespace[.25em]
    \end{tabular}
    \caption{
    {\bf Comparisons of the computational time of the far and near field
    interactions with $20$th order interpolation of the solution, without
    parallelization}. The setting of the experiments shown in this table is
    the same as the ones shown in Table \ref{tab:poi1}. It is important to
    note that, when one parallelizes the FMM computations, $T_{\ft}$
    becomes negligible as the number of cores increases, from which it
    follows that the computational time, with
    the use of near field geometry analysis and a staggered mesh, is roughly
    equal to the values in the $\frac{T_{\nt}^{*}}{T_{\nt}^{0}}$ column
    times the naive computational time.
    }
    \label{tab:poi2}
    \end{center}
\end{table}

\begin{figure}[h]
    \centering
    \begin{subfigure}{0.49\textwidth}
      \centering
      \includegraphics[width=\textwidth]{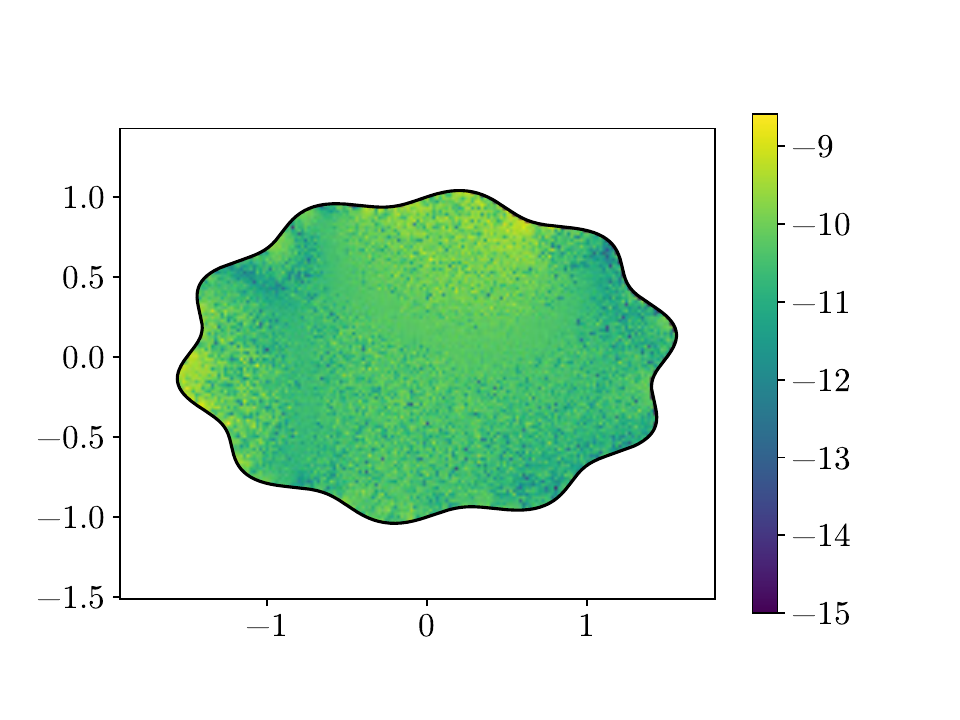}
      \caption{$\varepsilon=10^{-10}$}
    \end{subfigure}
    \begin{subfigure}{0.49\textwidth}
      \centering
      \includegraphics[width=\textwidth]{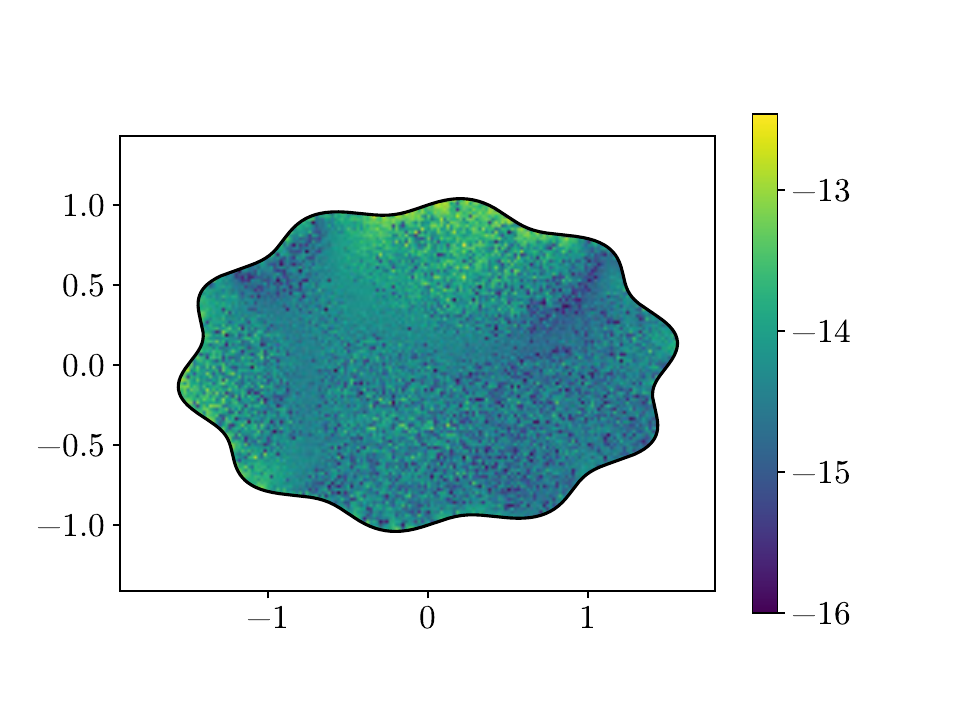}
      \caption{$\varepsilon=10^{-14}$}
    \end{subfigure}

  \caption{{\bf The heat maps of $\log_{10}\tilde E_{abs}$ when $h_0=0.05$}. 
  The results are computed by evaluating the interpolants.}
  \label{fig:poi_heat}
\end{figure}

\pagebreak

\section{Conclusions and further directions}
In this paper, we present three complementary techniques for accelerating
potential calculations over unstructured meshes, as well as a robust and
extensible framework for the evaluation and interpolation of 2-D volume
potentials over complicated geometries. With the use of precise near field
geometry analysis, we show that one can eliminate all of the unnecessary
near field potential computations. By introducing the use of a staggered
mesh, we further show that the number of
interpolation nodes at which the near field and self-interactions are
costly to evaluate is reduced dramatically.  These two observations 
facilitate the offloading technique, which transforms the
expensive and unstructured near field interaction computation into the
highly-optimized parallel FMM computation.  
Of the methods described in this paper, we believe the offloading
technique to be the most general, since we expect that the near field
will become vanishingly small when the order of the far field quadrature
rule is high, for other kernels, geometries and in higher dimensions. The
offloading technique is one of the few methods we are aware of for which
the use of extremely high-order quadrature rules is essential.

In the following sections, we discuss the generalizations and extensions of
the techniques and the volume potential evaluation algorithm presented in
this paper.
\subsection{Precise near field geometry analysis for different kernels and domains}
Although the near field geometry analysis is only applied to the 2-D
Newtonian potential in this paper, the same analysis can be trivially
generalized to the 2-D Helmholtz volume potential, as its kernel has the same
type of logarithmic singularity. The same approach to analyzing the near
field can be applied to, for example, quadrilateral elements in 2-D, and
more complicated elements in 3-D, although we expect the near field
geometry analysis in 3-D to be substantially more involved.

Since surfaces are often represented by a collection of mappings from 2-D
domains, and surfaces are often discretized by meshing these 2-D domains
using unstructured meshes, the generalization of the near field geometry
analysis to the on-surface evaluation of surface potentials is similar
to the near field analysis presented in this paper, except that
the kernel becomes more singular, and the effect of the mapping on
the near field must be accounted for.

\subsection{The offloading technique for computing surface and 3-D volume potentials}
The offloading technique clearly can be generalized to the computation of
surface and 3-D volume potentials, provided that high-order quadrature rules
are available. In practice, high-order quadrature rules for tetrahedra or
cubes are presently not available.  For example, the highest order
quadrature rule for tetrahedra reported in \cite{xiao} is 15, which is far
from enough for the offloading technique to be effective.

\subsection{The staggered mesh for surfaces and 3-D volumes}
Our heuristic way of generating the staggered mesh, described in Section
\ref{sec:stagger}, can be trivially generalized to the surface mesh and 3-D
volume mesh case. Furthermore, the use of a staggered mesh can
similarly accelerate the potential interpolation in these cases.

\subsection{Accelerating near and self-interaction computations by 
specialized quadrature rules}
Our focus in this paper is to accelerate the far and near field
interaction computations. The self-interaction computations are minimally 
optimized. After applying all of the optimizations proposed in this paper,
one can observe from Table \ref{tab:poi1} that the self-interaction
evaluation cost becomes a bottleneck of the algorithm. As is noted in Remark
\ref{rem:self_precomp}, this cost can be reduced dramatically by
precomputing a large number of specialized quadrature rules.  

In the computation of near field interactions, we use the most naive scheme,
i.e., adaptive subdivision, for resolving the nearly-singular integrand, and
we anticipate that the cost can be further reduced with the use of more
advanced methods (see, for example, \cite{anderson}). The techniques
proposed in this paper are compatible with other schemes for the computation
of near field and self-interactions.

\section{Acknowledgements}
We sincerely thank James Bremer for his helpful advice and for our
informative conversations.

\appendix

\section{Appendix: Geometric algorithms}
\label{sec:append}
In this appendix, we provide a description of all of the geometry processing
algorithms used in this paper.

\subsection{Quadtree}
  \label{sec:quadtree}
A quadtree is a tree data structure used to efficiently store points in a
two-dimensional space. More specifically, it partitions  the domain into boxes
by recursively subdividing each box into four sub-boxes until
each leaf box contains no more than $m$ points, where $m$ is a user-specified 
number. Given a set of $n$ points that are uniformly distributed,
it takes $\O(n\log n)$ operations to construct a quadtree for these points.
After the quadtree is constructed, it takes $\O(\log n)$ operations to find
all of the leaf boxes that intersect a given rectangle. We refer readers to
Chapter 37 of \cite{qtree} for a detailed introduction to the 
quadtree data structure.

\subsection{Construction of a signed distance function from
a set of parametrized curves}
  \label{sec:sdf}

In this section, we first introduce the concept of signed distance
function (SDF), and then describe an efficient algorithm for computing the
SDF of a geometry with boundaries described by a set of closed curves
  \begin{align}
\bigl\{\gamma_i(s)\bigr\}_i,\,\, \gamma_i:[0,L_i]\to\R^2, \,\, i=1,2,\dots,N,
  \end{align}
where $\gamma_i$ is a unit-speed parameterization, and $L_i$ is the total
arc length of $\gamma_i$. Below, we give the formal definition of a signed
distance function.

\begin{definition}[Signed distance function]
Given a geometry $\Omega$ with boundaries described by a set of closed curves
$\{\gamma_i(s)\}$, the signed distance function $h:\R^2\to \R$ determines 
the distance of a given point $x$ from the boundary of $\Omega$, with the sign
indicates whether $x$ is inside $\Omega$ or not. By convention, $h(x)$ is positive
for $x\in \Omega$, and negative for $x\in\Omega^c$.
\end{definition}

Our algorithm for converting the parameterized boundary curves into an SDF is
outlined as follows. We begin with the following precomputation:

\begin{enumerate}
\item (Sampling) Generate equidistant sampling points over the boundaries, where 
the number of total sampling points depends on the required accuracy.
In addition, we store the corresponding curve parameter for each sampling
point.
\item (Quadtree) Create a quadtree data structure for the sampling points
(see Appendix \ref{sec:quadtree}).
\end{enumerate}

Then, we evaluate the signed distance function at any given target location $x$
by the following steps.
\begin{enumerate}
\item Create a rectangle centered at $x$ with appropriate side lengths.
\item Query all of the sampling points that are inside the rectangle
by exploiting the quadtree data structure.
If no points are captured by the rectangle, increase the size of the
rectangle and perform the query step again.
\item Loop through all of the captured sampling points, and find the point $x'$
that is closest to the target point $x$.
\item (Optional) Do a few iterations of Newton's method using $x'$ as the
initial guess to get a highly accurate approximation to the closest point on
the boundaries.
\item Return $\sgn(\inner{x'-x}{n_{x'}})\norm{x'-x}$ as the SDF value at
$x$, where $n_{x'}$ denotes the outward-pointing normal vector of the
boundary at $x'$, and $\sgn(\cdot)$ represents the sign function.
\end{enumerate}

\begin{remark}
The necessity of Newton's method in this algorithm depends on the accuracy 
needed for the SDF evaluation. Without the use of Newton's method, the
accuracy is proportional to $h$, where $h$ is the spacing of the
equidistant sampling points.
\end{remark}

\begin{remark}
One can also construct a signed distance function from a domain
represented as an implicit function (see Section 4 in \cite{distmesh}).
\end{remark}

\subsection{Distmesh}
  \label{sec:distmesh}
Distmesh \cite{distmesh} is a simple and short algorithm for generating
a triangle mesh for a geometry with boundaries described by a signed distance
function. The algorithm is based on the physical analogy between a simplex mesh
and a truss structure, and the triangle mesh is computed by 
solving an ordinary differential equation for the equilibrium in the truss
structure composed of compressible springs (using piecewise linear
force-displacement relations).
Despite its simplicity, the generated triangles are of high quality,
in the sense that most of the triangles are close to equilateral, which is
important in many applications.

Below, we describe the Distmesh algorithm briefly (see \cite{distmesh} for details).

\begin{enumerate}
\item (Initial guess) Create an initial node distribution arranged in a
triangular tiling inside a bounding box of the input geometry, and
remove the nodes outside the geometry. 
\item (Reset topology) Compute a triangle mesh by applying
the Delauney triangulation algorithm to the nodes. 
\item (Compute equilibrium) Solve for the equilibrium in the truss
structure represented by the triangle mesh using the forward Euler method.
When some nodes end up outside the geometry, move them back to the closest 
point on the boundary, where the closest point is computed using the SDF.
\item Go back to Step 2 when the total movements of nodes in Step 3 become large. 
Otherwise, return the final triangle mesh.
\end{enumerate}

\begin{observation}
  \label{obs:adap_mesh}
It is often desirable to resolve a complex geometry by adaptive meshing, i.e.,
requiring mesh elements close to a singularity to be smaller relative to
elements further away. This can be achieved using Distmesh by setting the
equilibrium lengths of the springs (by analogy with a truss system) near the
singularities to be shorter than the equilibrium length of the rest of the
springs. In practice, the desired equilibrium lengths of the springs
are specified by a so-called element size function (see \cite{distmesh}
and Observation \ref{obs:fh} for details).
\end{observation}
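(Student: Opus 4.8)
The plan is to justify this observation directly from the physical spring-truss model on which Distmesh is built, rather than to establish a formal convergence statement. First I would recall the force law underlying the algorithm: each edge of the triangulation is modeled as a compressible spring, and the force it exerts along its axis is a function $F(\ell,\ell_0)$ of its current length $\ell$ and its equilibrium (rest) length $\ell_0$, where $F$ is repulsive when $\ell<\ell_0$ and vanishes when $\ell\geq\ell_0$. The crucial point is that these equilibrium lengths are not prescribed uniformly; rather, they are assigned through the element size function, so that the rest length of a given edge is proportional to $h$ evaluated at the edge's midpoint, where $h$ is the desired local length scale.

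The core of the argument is the equilibrium condition of Step 3. At a mechanical equilibrium of the truss, the net force at each interior node vanishes, which constrains the lengths of the edges meeting at that node to be consistent with their prescribed rest lengths. Since the rest lengths track $h$, it follows that at equilibrium each edge has length approximately proportional to the local value of $h$, and hence the triangles produced by the Delaunay step in a region where $h$ is small are correspondingly small. Prescribing $h$ to be small near a singularity and larger away from it therefore yields exactly the refinement pattern claimed in the observation.

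I would then address the two places where the argument is only approximate. First, because the force law is purely repulsive, edges at equilibrium are not exactly at their rest lengths but settle into a slightly compressed state; this introduces only a constant scaling factor between $h$ and the realized edge lengths, which is harmless for the qualitative refinement claim and can be absorbed into the normalization of $h$. Second, the forward Euler iteration that Distmesh uses to reach equilibrium is not guaranteed to converge to a global minimizer of the truss energy; in practice the re-triangulation steps and the projection of stray nodes back to the boundary via the SDF keep the iteration well-behaved, and one relies on the empirical robustness reported in \cite{distmesh}. The main obstacle is thus not a mathematical difficulty but the fact that the statement is inherently about the behavior of an iterative algorithm: the justification is the equilibrium condition relating edge lengths to rest lengths, together with a reference to the established performance of Distmesh, and no sharp quantitative bound on the final edge lengths is claimed or required.
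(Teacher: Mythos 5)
Your proposal is correct and follows essentially the same route as the paper: the paper offers no formal proof of this observation, justifying it only by the spring-truss analogy and a citation to \cite{distmesh}, and your argument is precisely that heuristic spelled out --- the element size function sets the rest lengths, the equilibrium condition of Step 3 forces edge lengths to track those rest lengths (up to the constant compression factor you correctly note), and the Delaunay re-triangulation then produces correspondingly small elements near the singularity. Your explicit acknowledgement that no quantitative bound is claimed matches the paper's framing of this as an empirical observation rather than a theorem.
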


\begin{observation}
The quality of a mesh generated by the Distmesh algorithm is relatively
insensitive to the computational accuracy of the solution to the evolution
equations of the truss system, and to the computational accuracy of the SDF
evaluation.  Therefore, the use of the forward Euler method, whose order of
convergence is one, is sufficient for our purpose. For the same reason, when
the SDF is computed using the algorithm described in Appendix
\ref{sec:sdf}, it is unnecessary to use Newton's method to improve its
accuracy.
\end{observation}

\subsection{Modified Distmesh algorithm for constructing a curved
triangulation}
  \label{sec:mod_distmesh}
In Appendix \ref{sec:distmesh}, we describe a simple triangle
mesh generation algorithm named Distmesh, introduced in \cite{distmesh}. 
To apply Distmesh to our particular problem, several additional pieces of
information must be returned by the algorithm, including:
\begin{itemize}
\item Given a mesh element, whether it is a triangle or a curved element;
\item Given a curved element, which vertex is opposite
to that curved side;
\item The curve parameters that correspond to the endpoints of a
curved side.
\end{itemize}
These issues can be remedied with several modifications to the Distmesh
algorithm. In addition, we present a simple technique for improving the
triangle mesh quality using the new outputs.

Firstly, given a triangle mesh, based on the observation that the
boundary edges are the edges that are associated only with a single element,
it is easy to quickly identify all of the boundary edges, boundary vertices
and curved elements (including the vertices opposite to the curved
sides). In addition, we also observe that the use of the signed distance
function (described in Appendix \ref{sec:sdf}) allows us to quickly determine
the curve parameters corresponding to the endpoints of a curved side. So
far, all of the required information stated above can be computed for any
given triangle mesh and its associated boundary.

The mesh quality can be improved by the following modification. 
During the computation of the equilibrium state of the truss structure, as noted
in Appendix \ref{sec:distmesh}, some nodes, especially the ones that are
located on the boundary, may end up outside the geometry. The original
Distmesh algorithm handles this by projecting each outside node to the
closest point on the boundary at the end of each iteration.
In other words, when solving for the
equilibrium, the boundary is neglected until the very end of each
iteration. Such treatment is somehow non-physical, since the boundary should
be treated as a hard obstacle at all stages. Therefore, in our
implementation, we maintain an array of flags that indicates whether or not a node
is on the boundary, and ensure that for every boundary node, the force
component that is normal to the boundary is eliminated. This way, only
forces tangential to the boundary can contribute to the computation.
Although some nodes will still end up outside the geometry and require 
projection back to the closest point on the boundary at the end of each
iteration, their movements become more physical, and it improves the
triangle mesh quality.

Finally, we note that such modification is impossible without keeping track of
the boundary nodes.

\begin{observation}
\label{obs:curva_fh}
Recall that Distmesh uses an element size function to control the sizes
of mesh elements (see Observation \ref{obs:adap_mesh}). We find the element
size function
  \begin{align}
h(x,y):=\oint_{\partial\Omega}e^{-a((x-x_0)^2+(y-y_0)^2)}\cdot\kappa(x_0,y_0)
\d s
  \end{align}
gives a mesh that resolves the boundary of the domain well (see Figure
\ref{fig:wob_domain_mesh}), where $\kappa(x_0,y_0)$ denotes the curvature of
$\partial\Omega$ at $(x_0,y_0)$, and $a$ is a constant that depends on the
scale of the domain and the size of the mesh elements.  
\label{obs:fh}
\end{observation}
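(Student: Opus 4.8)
The plan is to treat this as a design claim rather than a formal theorem: since ``resolves the boundary well'' is really a statement about the output of the Distmesh iteration (Observation~\ref{obs:adap_mesh}), I would justify it by establishing the three structural properties that any useful element size field must possess --- smoothness, spatial locality, and sensitivity to the boundary curvature --- and then appeal to the mesh displayed in Figure~\ref{fig:wob_domain_mesh} for the quantitative confirmation. All of the analytic content lives in the convolution structure of
\[
h(x,y)=\oint_{\partial\Omega}e^{-a((x-x_0)^2+(y-y_0)^2)}\kappa(x_0,y_0)\d s,
\]
so the bulk of the work is to read these three properties off the Gaussian kernel together with the boundedness of $\kappa$ on a piecewise-smooth, compact $\partial\Omega$.

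First I would verify smoothness. Because the kernel $e^{-a\norm{(x,y)-(x_0,y_0)}^2}$ is $C^\infty$ in $(x,y)$ with all derivatives bounded uniformly for $(x_0,y_0)$ ranging over the compact curve $\partial\Omega$, differentiation under the integral sign (the Leibniz rule) gives $h\in C^\infty(\R^2)$, with each derivative costing a factor of order $\sqrt{a}$. This is exactly the gradient-limiting property one wants, since the Distmesh equilibrium is stable only when the target edge length varies slowly between neighbouring elements; a size field built directly from the (possibly rough) curvature data without this smoothing would lack it.

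Next I would obtain locality and curvature sensitivity simultaneously through a Laplace-type expansion of the contour integral about the boundary point $p$ nearest to $(x,y)$. Writing $d$ for the distance from $(x,y)$ to $\partial\Omega$ and using an arc-length parametrization around $p$, the Gaussian concentrates the mass of the integrand near $p$ and yields
\[
h(x,y)\sim\sqrt{\frac{\pi}{a}}\,\kappa(p)\,e^{-a d^2}
\]
to leading order in $a^{-1/2}$, with corrections depending on the local geometry of $\partial\Omega$. The factor $e^{-a d^2}$ shows that $h$ is concentrated in a tube of width $\sim a^{-1/2}$ about $\partial\Omega$, so the field is tied to the boundary rather than to the interior; the factor $\kappa(p)$ shows that $h$ inherits the local curvature and is largest precisely where $\partial\Omega$ bends most sharply. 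The rigorous versions of the two claims are routine: the tail of the contour outside radius $R$ of $(x,y)$ is bounded by $L\,\kappa_{\max}\,e^{-aR^2}$, and the leading asymptotic follows from a standard Laplace estimate.

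The main obstacle --- and the reason this is stated as an observation rather than a theorem --- is that ``resolves the boundary well'' has no canonical quantitative definition. The precise monotone rescaling that converts the field $h$ (which is largest where $\partial\Omega$ bends most, and which one keeps positive by taking $\kappa\ge 0$, or $\abs{\kappa}$ for a nonconvex boundary) into the equilibrium spring lengths, together with the choice of the scale constant $a$ (which must be matched to both the domain size and the nominal element size so that the tube width $a^{-1/2}$ is commensurate with the local radius of curvature $1/\kappa$), is a matter of tuning rather than proof. The honest completion is therefore semi-empirical: the three analytic properties above explain \emph{why} a size field of this form should work, and the mesh in Figure~\ref{fig:wob_domain_mesh} certifies that, with the stated choice of $a$, it in fact does.
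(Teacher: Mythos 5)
The paper offers no proof of this statement at all: it is an empirical Observation, and the only evidence supplied is the mesh displayed in Figure~\ref{fig:wob_domain_mesh}. Your proposal is therefore not reproducing a hidden argument but supplying analytic scaffolding the authors omit, and that scaffolding is sound: differentiation under the integral sign does give $h\in C^\infty$ with derivatives scaling like powers of $\sqrt{a}$, the tail bound $L\,\kappa_{\max}e^{-aR^2}$ is correct, and the Laplace-type estimate $h(x,y)\approx\sqrt{\pi/a}\,\kappa(p)\,e^{-ad^2}$ at the nearest boundary point $p$ follows from $\norm{(x,y)-\gamma(s)}^2 = d^2+s^2+O(s^2 d\,\kappa)$ in arc length about $p$ (with the obvious caveat that several boundary arcs may contribute comparably for points near the medial axis). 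You also correctly flag the two points the paper leaves implicit and which any honest justification must address: the curvature of a non-convex boundary such as the wobbly ellipse changes sign, so one must work with $\abs{\kappa}$ or otherwise keep the field positive; and, since in Distmesh a \emph{larger} element size function means \emph{larger} elements (Observation~\ref{obs:adap_mesh}), the field as written---largest where the boundary bends most---must be passed through a monotone \emph{decreasing} rescaling before it can serve as the equilibrium spring length. Your closing assessment matches the paper's actual epistemic status: the quantitative content of ``resolves the boundary well'' is certified only by the figure, and the choice of $a$ (matching the tube width $a^{-1/2}$ to the element size and to the local radius of curvature) is tuning, not theorem. In short, your write-up is more careful than the source; there is no gap to report.
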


\begin{remark}
In fact, the volume potential evaluation algorithm of the paper does not
depend on any particular meshing algorithm. All that is needed is a triangle
mesh of good quality which contains all of the required information
described in this section. 
\end{remark}

\subsection{Nearby mesh elements query}
  \label{sec:query}
    
Due to the singularity of the Green's function, it is important to identify all
nearby mesh elements at a given target location, so that the elements close to
or containing the target can be handled separately from the mesh elements
that are far away.  In this section, we introduce an 
algorithm for finding all of the mesh elements that are within
a certain distance of a given target $x$, together with the particular element
that $x$ lies within (if such an element exist).  The procedure is outlined
as follows. We first preprocess the triangle mesh:

\begin{enumerate}
\item (Sampling) For each mesh element, sample the four vertices of its
bounding box.  Associate each sampling point with the index of the mesh
element that it is sampled from.
\item (Quadtree) Create a quadtree data structure for the sampling points
(see Appendix \ref{sec:quadtree}).
\end{enumerate}

Then, we find the nearby mesh elements of a given target $x$.
Formally speaking, given a query box centered at $x$ of size larger
than the sizes of the bounding boxes of adjacent mesh elements of
the target, we find all of the elements that intersect the query box,
as follows.
\begin{enumerate}
\item Find all the sampling points that are inside the query box by
exploiting the quadtree data structure. 
\item Return the associated element indices of the captured sampling
points.
\end{enumerate}

Finally, we are also able to find the particular mesh element that $x$ lies
within (if any):
\begin{enumerate}
\item (Optional) Evaluate the signed distance function (see Appendix \ref{sec:sdf})
at $x$, and return null if $x$ is outside the domain $\Omega$.
\item Loop through all of the nearby elements of $x$ (obtained through the
previous computation). For each element: 
\begin{itemize}
\item If the element is a triangle, return the index if the barycentric coordinates
of $x$ with respect to the element are all between zero and one;
\item If the element is a curved element (denoting the vertex opposite to
the curved side by $O$), return the index if both of the following
conditions are satisfied:
\begin{itemize}
\item The polar angle of $x$ is in
between the polar angles of the two straight sides of the curved element, with
respect to polar coordinates centered at $O$.
\item The distance between $x$ and $O$ is smaller than the distance between
$x'$ and $O$, where $x'$ denotes the point at which the line $x'O$ and the
curved side intersects.

\end{itemize}
\end{itemize}
\end{enumerate}

As is stated in our assumption above, it is important to have the query box be
larger than the sizes of the bounding boxes of adjacent mesh elements of
the target location, since otherwise, the query box can overlap with the
bounding box without capturing any of its vertices, which leads to uncaptured
nearby elements (see Figure \ref{fig:query_box}).

\begin{figure}[h]
    \centering
    \includegraphics[width=0.6\textwidth]{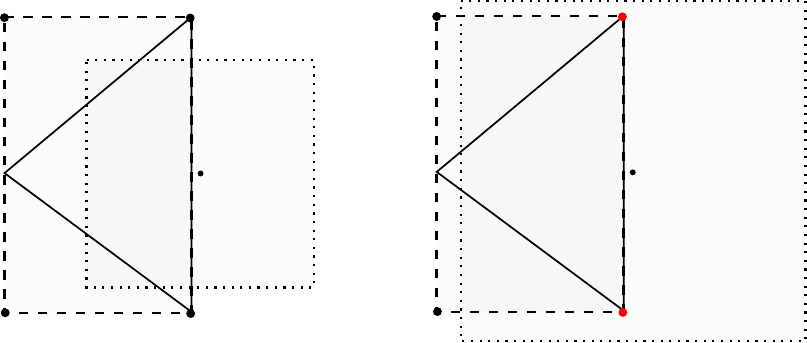}
  \caption{
      {\bf The size of the query box has to be larger than
      its nearby bounding boxes}. On the left, the triangle is not
      captured by the query box, as the size of the box is too small.  This
      can be fixed by increasing the size of the box, as is shown on the
      right.}
   \label{fig:query_box}
\end{figure}

\begin{remark}
  \label{rem:near_query_time}
The construction of the quadtree above takes $\O(n\log n)$ operations, and
the use of the quadtree to query nearby elements takes $\O(\log n+m)$
operations, where $n$ and $m$ are the total number of mesh elements and
nearby elements, respectively (see Appendix \ref{sec:quadtree}). We note that
in practice, $m=\O(1)$.
\end{remark}

\begin{remark}
This query algorithm can be easily generalized to meshes of other types, e.g., 
the quadrilateral mesh.
\end{remark}

\end{document}